\newtheorem{remark}{Remark}
\pgfplotsset{compat=newest}
\pgfplotsset{
  colormap={rainbowDesaturated}{
      rgb255=( 71, 71,219)
      rgb255=(  0,  0, 92)
      rgb255=(  0,255,255)
      rgb255=(  0,128,  0)
      rgb255=(255,255,  0)
      rgb255=(255, 97,  0)
      rgb255=(107,  0,  0)
      rgb255=(224, 77, 77)
    },
}
\renewcommand{\vec}[1]{\mathbf{#1}}
\newcommand{\tensorOne}[1]{\boldsymbol{#1}}
\newcommand{\tensorTwo}[1]{\boldsymbol{#1}}
\newcommand{\tensorFour}[1]{\mathbb{#1}}
\newcommand{\divergence}{\operatorname{div}}
\newcommand{\evaluate}[3]{\left.{#1}\right|_{#2}^{#3}}
\renewcommand{\Vec}[1]{\boldsymbol{\mathbf{#1}}}
\newcommand{\Mat}[1]{#1}
\newcommand{\diam}[1]{\operatorname{diam(#1)}}
\newcommand{\meas}[1]{\operatorname{meas(#1)}}
\newcommand{\normH}[1]{\|{#1}\|_{H^1}}
\newcommand{\snormH}[1]{|{#1}|_{1}}
\newcommand{\normL}[1]{\|{#1}\|_{L^2}}
\newcommand{\normLg}[1]{\|{#1}\|_{L^2(\Gamma)}}
\newtheorem{theorem}{Theorem}
\newtheorem{lemma}{Lemma}
\definecolor{viridis0}{rgb}{0.267004, 0.004874, 0.329415}
\definecolor{WildStrawberry}{rgb}{0.933333, 0.160784, 0.403922}
\newcommand{\logLogSlopeTriangle}[5]
{

  \pgfplotsextra{
    \pgfkeysgetvalue{/pgfplots/xmin}{\xmin}
    \pgfkeysgetvalue{/pgfplots/xmax}{\xmax}
    \pgfkeysgetvalue{/pgfplots/ymin}{\ymin}
    \pgfkeysgetvalue{/pgfplots/ymax}{\ymax}

    \pgfmathsetmacro{\xArel}{#1}
    \pgfmathsetmacro{\yArel}{#3}
    \pgfmathsetmacro{\xBrel}{#1-#2}
    \pgfmathsetmacro{\yBrel}{\yArel}
    \pgfmathsetmacro{\xCrel}{\xArel}

    \pgfmathsetmacro{\lnxB}{\xmin* (1- (#1-#2))+\xmax* (#1-#2)} 
    \pgfmathsetmacro{\lnxA}{\xmin* (1-#1)+\xmax*#1} 
    \pgfmathsetmacro{\lnyA}{\ymin* (1-#3)+\ymax*#3} 
    \pgfmathsetmacro{\lnyC}{\lnyA+#4* (\lnxA-\lnxB)}
    \pgfmathsetmacro{\yCrel}{(\lnyC-\ymin)/ (\ymax-\ymin)} 

    \coordinate(A) at (rel axis cs:\xArel,\yArel);
    \coordinate(B) at (rel axis cs:\xBrel,\yBrel);
    \coordinate(C) at (rel axis cs:\xCrel,\yCrel);

    \draw[#5]   (A)-- node[pos=0.5,anchor=north] {1}
    (B)--
    (C)-- node[pos=0.5,anchor=west] {#4}
    cycle;
  }
}
\journal{arXiv}
\begin{document}

\begin{frontmatter}

  \title{A Robust Framework for Frictional Fault Contact in Geological Formations Using a Stabilized Augmented Lagrangian Approach}

  \author[inst1]{ Matteo Frigo }

  \affiliation[inst1]{organization={Stanford University},
    addressline={367 Panama Street},
    city={Stanford},
    postcode={94305},
    state={CA},
    country={USA}}

  \author[inst2]{Nicola Castelletto}
  \author[inst2]{Matteo Cusini}
  \author[inst2]{Randolph Settgast}
  \author[inst1]{Hamdi Tchelepi}

  \affiliation[inst2]{organization={Lawrence Livermore National Laboratory},
    addressline={7000 East Ave},
    city={Livermore},
    postcode={94550},
    state={CA},
    country={USA}}

  \begin{abstract}
    Numerical simulations are essential for evaluating the performance and safety of geological engineered systems such as geologic carbon storage sites, enhanced geothermal fields, and oil and gas reservoirs. A key challenge lies in accurately modeling the frictional contact behavior along fault surfaces. This problem involves inequality constraints that arise from the physics of frictional slip, requiring specialized numerical methods to handle the resulting highly nonlinear and path-dependent behavior.
    In this work, we address this challenge using an Augmented Lagrangian Method (ALM) implemented via the Uzawa algorithm. The formulation employs mixed finite element spaces, combining low-order piecewise linear displacements within the 3D domain cells with piecewise constant tractions defined on the fault surfaces. Furthermore, to ensure stability and satisfy the inf-sup condition, the discrete displacement space is enriched with face bubble functions on both sides of the contact interfaces. This approach offers several advantages over other stabilization techniques that rely on additional terms, as it does not require extra parameters for implementation, and it integrates naturally in the Uzawa framework.

  \end{abstract}

\end{frontmatter}

\section{Introduction}
\label{sec:intro}
Geologic carbon dioxide, geothermal energy production, and other subsurface energy applications face significant geomechanical challenges \cite{FerGamJanTea10,JuaHagHer12,TeaCasGam14,BuiEtal19,KivPujRutVil22,SalSilLunRogDavJua25}.
A key concern is ensuring the safety of anthropogenic operations while avoiding unwanted outcomes such as induced seismicity, fault reactivation, and damage the rock integrity in the surrounding formation.
In this context, numerical modeling of contact mechanics serves as a valuable tool for devising risk-mitigation strategies  and supporting the decision-making process.

Even though the theoretical framework of contact mechanics is well-defined, simulating these processes remains challenging due to their highly nonlinear nature. Additionally, numerical convergence is furthered hindered  by  complex subsurface stratigraphy and heterogeneous physical properties in the domain.
The requirement to accurately represent geological layers and the high levels of heterogeneity can lead to distorted grids, characterized by poor aspect ratios and significant property variations between elements. All of these factors contribute to the creation of a highly challenging problem, both from the perspectives of nonlinear and linear solving.
For these reasons, the research and development of robust and reliable numerical methods capable of addressing these issues are of paramount importance.

Many approaches have been developed over the years to address contact mechanics \cite{WriLau06}, and providing an exhaustive review is beyond the scope of this work. However, we briefly summarize the main strategies, beginning with a classification based on a key distinguishing feature: how each approach represents discontinuities.
The first class of discretization methods relies on a conforming grid that explicitly resolves the geometry of the contact surfaces, enabling the use of standard finite element techniques  \cite{PusLau04,FraFerJanTea16,GarKarTch16,SetFuWalWhi17}. A potential disadvantage of this approach is that, in presence of  a complex fault geometry or fracture network, the grid can easily become excessively refined and distorted.
The second class consists of embedded methods, which treat the contact surface as independent surfaces overlaid on a separate background grid. The continuum discretization of the elements intersected by this surface is enriched to capture discontinuities \cite{OliHueSan06,Bor08,RenJiaYou16,CusWhiCasSet21}. However, extracting accurate stress fields near these discontinuities can be challenging due to the use of enrichment functions and non-standard interpolation.
A third class of methods eliminates the need for a sharp representation of the discontinuity, instead modeling fractures as regularized (smooth) fields within a continuum framework. This  class includes phase-field and damage-mechanics-based approaches \cite{WheWicWol14,GeeLiuHuMicDol19}.

In this work, we adopt a conforming discretization. This choice is motivated by the need for accurate traction values along contact surfaces, which are critical for assessing fault reactivation and informing models of induced seismicity.
Among the available strategies within this class, the Penalty Method (PM) and the Lagrange Multiplier Method (LMM) are two of the most commonly used techniques to enforce contact constraints. Each comes with its own advantages and limitations \cite{Yas13,WriLau06}. The Penalty Method is straightforward and often easier to implement; however, it may yield inaccurate solutions in cases of strong nonlinearities or result in ill-conditioned linear systems. Conversely, the Lagrange Multiplier Method offers a more accurate enforcement of constraints but introduces additional global degrees of freedom and typically requires an active set strategy \cite{HueWoh05} to handle the inequality conditions, making it more computationally demanding. In this context, the Augmented Lagrangian Method (ALM) emerges as a compelling alternative, effectively bridging the gap between the two by combining the advantages of both approaches while alleviating many of their respective limitations  \cite{Roc74,CurAla88,AlaCur91,SimLau92,BurHanLar19}.
In this paper, we propose an Augmented Lagrangian Method (ALM) approach that utilizes a mixed formulation consisting of lower-order linear piecewise displacement alongside a constant face-centered Lagrange multiplier. This choice of discrete spaces raises concerns regarding inf-sup stability that must be addressed to ensure accurate simulations.
One class of stabilization techniques addresses inf-sup stability in mixed formulations by enriching the displacement space, typically through the use of second-order elements or the incorporation of bubble functions \cite{BrePit84,BreLeoMarRus97,BreMar01,HauLeT07,PusLauSol08,BofBreFor13,RodHuOhmAdlGasZik18,DroEncFaiHaiMas24}. Although effective, this approach has the potential drawback of introducing additional degrees of freedom and resulting in denser linear systems.
A second class of methods achieves stabilization by modifying the constraint equations with an additional term. This additional contribution may be derived from macroelement theory or through physics-based arguments \cite{ElmSilWat14,FraCasWhiTch20,FriCasFerWhi21,AroCasHamWhiTch24}. In both cases, the added term is scaled by parameters that can be difficult to determine , particularly when the mesh used to discretize the domain is unstructured and exhibits severe heterogeneity.
In this work, we propose to enhance the displacement space using face bubble functions, which ensure stability without relying on additional parameters. Furthermore, the incorporation of bubble functions not only facilitates a more natural integration process within the Uzawa procedure of the Augmented Lagrangian method, but, in the specific context of contact mechanics, they also offer an additional advantage: when static condensation is applied, they do not increase the fill-in of the global system matrix.

The manuscript is organized as follows. The next section provides a brief summary of the mathematical formulation. This is followed by a description of the face-bubble function stabilization strategy and a proof of the inf-sup condition. We then present several numerical test cases to demonstrate the effectiveness and robustness of the proposed method. Finally, the conclusion section summarizes the key points of the paper.

\section{Mathematical formulation}
\label{sec:matform}
\subsection{Problem statement}
Let us consider the time domain of interest $\mathbb{T} = (0, t_{\max})$ and a bounded regular domain $\overline{\Omega} = \Omega \cup \partial \Omega$ in $\mathbb{R}^3$, with $\Omega$ an open set, $\partial \Omega$ its sufficiently smooth boundary, and $\tensorOne{n}_{\Omega}$ its outer normal vector.
The boundary is divided into two distinct, non-overlapping subsets, such that $\partial \Omega = {\Gamma_u \cup \Gamma_{\sigma}}$ and $\Gamma_u \cap \Gamma_{\sigma} = \emptyset$.
Dirichlet boundary conditions for displacement are applied on $\Gamma_u$, while Neumann boundary conditions for traction are applied on $\Gamma_{\sigma}$.
The contact surface $\Gamma_f$ is mathematically represented as two sufficiently regular overlapping surfaces, $\Gamma_f^{-}$ and $\Gamma_f^{+}$, embedded within the domain $\Omega$.
The orientation of the internal boundary is characterized by a unit vector orthogonal to the fracture plane; by convention, we define $\tensorOne{n}_f=\tensorOne{n}_f^{-}=-\tensorOne{n}_f^{+}$.
Finally, we introduce the trace operators $\gamma^{-}$ and $\gamma^{+}$, which enable the restriction of functions defined on $\Omega$ to the internal boundaries $\Gamma_f^{-}$ and $\Gamma_f^{+}$, respectively.
Given these preliminaries, under the assumption of quasi-static conditions and infinitesimal strains, the strong form of the initial boundary value problem (IBVP) describing the deformation of the domain $\overline{\Omega}$ can be stated as \cite{WriLau06, Kik88}:
find the displacement, $\tensorOne{u}$ : $\overline{\Omega} \times \mathbb{T} \rightarrow \mathbb{R}^3$, and the traction fields, $\tensorOne{t}$ : $\Gamma_f \times \mathbb{T} \rightarrow \mathbb{R}^3$, such that:
\begin{linenomath}
  \begin{subequations}
    \begin{align}
      - \divergence{ \tensorTwo{\sigma}(\tensorOne{u})}
       & = \tensorOne{0}
       &                 &
      \mbox{ in } \Omega \times \mathbb{T}
       &                 &
      \mbox{(linear momentum balance)},
      \label{eq:momentumBalanceS}
      \\
      \llbracket \tensorTwo{\sigma}(\tensorOne{u}) \rrbracket \cdot \tensorOne{n}
       & =
      \tensorOne{0}
       &                 &
      \mbox{ in } \Gamma_f \times \mathbb{T}
       &                 &
      \mbox{(fracture traction balance)},
      \label{eq:tractionBalanceS}
    \end{align}\label{eq:IBVP}\null
  \end{subequations}
\end{linenomath}
subject to the constraints:
\begin{linenomath}
  \begin{subequations}
    \begin{align}
      t_N
      =
      \tensorOne{t} \cdot \tensorOne{n}_f                                                     & \le 0
                                                                                              &       &
      \mbox{ in } \Gamma_f \times \mathbb{T}
                                                                                              &       &
      \mbox{(normal contact conditions)},
      \label{eq:normalcontact0}
      \\
      g_N
      =
      \llbracket \tensorOne{u} \rrbracket \cdot \tensorOne{n}_f                               & \ge 0
                                                                                              &       &
      \mbox{ in } \Gamma_f \times \mathbb{T},
                                                                                              &       &
      \label{eq:normalcontact1}
      \\
      t_N g_N                                                                                 & = 0
                                                                                              &       &
      \mbox{ in } \Gamma_f \times \mathbb{T},
                                                                                              &       &
      \label{eq:normalcontact2}
      \\
      \| \tensorOne{t}_T \|_2 - \tau_{\max}                                                   & \le 0
                                                                                              &       &
      \mbox{ in } \Gamma_f \times \mathbb{T}
                                                                                              &       &
      \mbox{(Coulomb frictional law)},
      \label{eq:coulomb0}
      \\
      \dot{\tensorOne{g}}_T \cdot \tensorOne{t}_T - \tau_{\max} \| \dot{\tensorOne{g}}_T \|_2 & = 0
                                                                                              &       &
      \mbox{ in } \Gamma_f \times \mathbb{T},
                                                                                              &       &
      \label{eq:coulomb1}
    \end{align}\label{eq:constraints}\null
  \end{subequations}
\end{linenomath}
and a set of boundary and initial conditions:
\begin{linenomath}
  \begin{subequations}
    \begin{align}
      \tensorOne{u}
       & =
      \overline{\tensorOne{u}}
       &   &
      \mbox{ on } \Gamma_u \times \mathbb{T},
       &   &
      \\
      \tensorTwo{\sigma}x(\tensorOne{u})\cdot \tensorOne{n}_{\Omega}
       & =
      \overline{\tensorOne{t}}
       &   &
      \mbox{ on } \Gamma_{\sigma} \times \mathbb{T},
       &   &
      \\
      \tensorOne{u}
       & =
      {\tensorOne{u}_0}
       &   &
      \mbox{ on } \Omega.
       &   &
    \end{align}\label{eq:bc}\null
  \end{subequations}
\end{linenomath}
Here, $\tensorTwo{\sigma}$ is the Cauchy stress tensor and it is related to the displacement vector through the constitutive relationship $\tensorTwo{\sigma} = \tensorFour{C} : \nabla^s \tensorOne{u}$, where $\tensorFour{C}$ is the fourth-order elasticity tensor and $\nabla^s$ is the symmetric gradient operator.
$\tensorOne{t}$ is the traction vector acting on $\Gamma_f$, defined as
$\tensorOne{t} = \tensorTwo{\sigma} \cdot \tensorOne{n}^{-}_f = - \tensorTwo{\sigma} \cdot \tensorOne{n}^{+}_f = (t_N \tensorOne{n}_f + \tensorOne{t}_T )$, where $t_N$ and $\tensorOne{t}_T$ represent the normal and tangential components, with respect to the local reference system.
The limit value $\tau_{\max} = c - t_N \tan(\theta)$ is the maximum admissible modulus of $\tensorOne{t}_T$ according to the Coulomb criterion, with $c$ and $\theta$ the cohesion and friction angle, respectively.
The linear continuous operator $\llbracket \cdot \rrbracket$ denotes the jump of a quantity across the fracture surface $\Gamma_f$, defined as $\llbracket \cdot \rrbracket = ( \gamma^{+}(\cdot) - \gamma^{-} (\cdot) )$, with $\gamma^{\pm}(\cdot)$ the trace operator onto $\Gamma_f^{+}$ and $\Gamma_f^{-}$, respectively.
For instance, the jump in the displacement field is expressed as $\llbracket \tensorOne{u} \rrbracket = ( \gamma^{+}(\tensorOne{u}) - \gamma^{-} (\tensorOne{u}) ) = ( g_N \tensorOne{n}_f + \tensorOne{g}_T)$, representing the relative displacement across $\Gamma_f$, where $g_N$ and $\tensorOne{g}_T$ are the normal and tangential components of the displacement jump, respectively.

\subsection{Weak formulation}
Assuming, without loss of generality, homogeneous Dirichlet boundary conditions, a classical weak formulation can be derived by introducing the following functional spaces:
\begin{linenomath}
  \begin{subequations}
    \begin{align}
      \boldsymbol{\mathcal{U}}
       & =
      \{
      \tensorOne{v} \in \boldsymbol{H}^1(\Omega)
      :
      \tensorOne{v}|_{\Gamma_u}=\tensorOne{0}
      \},
      \label{eq:spaceU}
      \\
      \boldsymbol{\mathcal{M}} \left( \tensorOne{t} \right )
       & =
      \{
      \tensorOne{\mu} \in \boldsymbol{H}^{-\frac{1}{2}}(\Gamma_f)
      :
      \mu_N \le 0, \;
      \left( \tensorOne{\mu}, \tensorOne{w} \right)_{\Gamma_f} \le  \left( \tau_{\max} \left(t_N \right), \| \tensorOne{w}_T \|_2 \right )_{\Gamma_f},
      \forall \tensorOne{w} \in \boldsymbol{H}^{\frac{1}{2}}(\Gamma_f), \;
      w_N \ge 0
      \}. \label{eq:spaceM}
    \end{align}\label{eq:functionSpaces}\null
  \end{subequations}
\end{linenomath}
Then the weak formulation of the IBVP~\eqref{eq:IBVP} reads: for each $t \in \mathbb{T} $ find
$(\tensorOne{u}, \tensorOne{t}) \in \boldsymbol{\mathcal{U}} \times \boldsymbol{\mathcal{M}}$
such that
\begin{linenomath}
  \begin{subequations}
    \begin{alignat}{2}
       & (\nabla^s \tensorOne{v},\tensorTwo{\sigma}(\tensorOne{u}))_{\Omega}
      + ( \llbracket \tensorOne{v} \rrbracket, \tensorOne{t}_N + \tensorOne{t}_T )_{\Gamma_f}
      - ( \tensorOne{v}, \bar{\tensorOne{t}} )_{\Gamma_{\sigma}}
      = 0,
       &                                                                                   &
      \quad
      \forall \tensorOne{v} \in \boldsymbol{\mathcal{U}},
      \label{eq:momentumBalanceW}
      \\
       & ( \tensorOne{t}_N - \tensorOne{\mu}, \tensorOne{g}_N (\tensorOne{u}) )_{\Gamma_f}
      + ( \tensorOne{t}_T - \tensorOne{\mu}, \dot{\tensorOne{g}}_T (\tensorOne{u}) )_{\Gamma_f}
      \ge 0,
      &                                                                                   &
      \quad
      \forall \tensorOne{\mu} \in \boldsymbol{\mathcal{M}}.
      \label{eq:tractionBalanceW}
    \end{alignat}\label{eq:weakform}\null
  \end{subequations}
\end{linenomath}
The compact notation $(*, \star)_D$ is used to denote the $L^2$-inner product on a given domain $D$ of scalar, vector, or rank-2 tensor functions in $L^2(D)$, $[L^2(D)]^3$, of $[L^2(D)]^{3\times 3}$, as appropriate -- for example, $( \llbracket \tensorOne{v} \rrbracket, {\tensorOne{t}} )_{\Gamma_f} = \int_{\Gamma_f}  \llbracket \tensorOne{v} \rrbracket \cdot \tensorOne{t}_N \; \mathrm{d}\Gamma $.
Note the apparent dimensional inconsistency in \cref{eq:tractionBalanceW}, where the first argument of the first and second inner product has units of displacement and velocity, respectively.
Since static Coulomb friction is considered, the rate of the tangential displacement $\dot{\tensorOne{g}}_T$ should be interpreted in a discrete setting as a pseudo-time dependent quantity.
By discretizing the time interval $\mathbb{T}$ into $n_T$  sub-intervals of size $\Delta t = (t_n - t_{n-1})$, $\dot{\tensorOne{g}}_T$ can be replaced in the discrete setting by the tangential displacement increment $\Delta_n \tensorOne{g}_T$ ~\cite{Woh11}, where $\Delta_n$ denotes the backward difference operator defined as $\Delta_n (\bullet) = (\bullet)_n - (\bullet)_{n-1}$, with $n$ and $(n-1)$ indicating the current and the previously converged discrete time levels, respectively.

The weak formulation \eqref{eq:weakform} constitutes a non-linear problem that involves inequalities.
Solving inequalities into a weak formulation can present significant challenges.
For this reason, many modern numerical strategies for contact mechanics aim to reformulate the original problem in terms of variational equalities, which can be easily integrated into a finite element framework and do not necessitate entirely new minimization techniques.
Various approaches can be employed to achieve this objective.
Among these, the active set strategy---a well-established technique in quadratic programming---is particularly noteworthy, often used with Lagrange multipliers.
This method enables the selection of an active contact region, allowing the replacement of inequality \eqref{eq:tractionBalanceW} with a variational equation.
Once the non-linear problem has been solved, the status of the fracture element is assessed against the assumed region, and a new active set is selected accordingly.
This process continues iteratively until convergence of the contact set is achieved.
However, this procedure has some disadvantages. In fact, there is no guarantee regarding the speed of convergence, nor is there assurance of convergence itself.
Furthermore, it has been observed that as problems become more complex, the method may start to loop excessively, leading to multiple resolutions of the non-linear problem, resulting in increased computational costs.
Another widely used strategy for addressing inequalities in the weak formulation is the Augmented Lagrangian Method \cite{SimLau92, Ren13}.
This approach yields a smooth energy functional and transforms the problem into a fully unconstrained one, ensuring the exact satisfaction of contact constraints with a finite penalty parameter.

Let us revisit the fundamentals of the Augmented Lagrangian Method.
The key idea involves replacing the inner products in \cref{eq:tractionBalanceW} using the following relationships:
\begin{linenomath}
  \begin{subequations}
    \begin{alignat}{2}
       & (\tensorOne{\mu}, \tensorOne{t}_N)_{\Gamma_f}
      = (\tensorOne{\mu}, ( {t}_N + \varepsilon_N {g}_N )_{-} \tensorOne{n}_f)_{\Gamma_f},
      &                                                                                   &
      \quad
      \forall \tensorOne{\mu} \in \boldsymbol{\mathcal{M}},     
      \label{eq:lagmultNWeak}
      \\
       & (\tensorOne{\mu}, \tensorOne{t}_T)_{\Gamma_f}
      = ( \tensorOne{\mu}, P_{(0, \tau_{\max}(t_N, g_N))} (\tensorOne{t}_T + \varepsilon_{T} \dot{\tensorOne{g}}_T ) )_{\Gamma_f},
      &                                                                                   &
      \quad
      \forall \tensorOne{\mu} \in \boldsymbol{\mathcal{M}}.
      \label{eq:lagmultTWeak}
    \end{alignat}\label{eq:lagmultWeak}\null
  \end{subequations}
\end{linenomath}
where $\varepsilon_N$ and $\varepsilon_T$ are the positive augmentation (i.e. stiffness) parameters for normal and tangential contact, respectively, and $\tau_{\max}(t_N, g_N) = ( c - \tan(\theta) ( t_N + \varepsilon_N g_N )_{-} ) $.
The term $P_{(0,\rho)}(\tensorOne{t}) = \min\left( 1, \frac{\rho}{||\tensorOne{t}||} \right) \tensorOne{t} $ represents the orthogonal projection onto the closed ball centered at 0 with radius $\rho$, while the notation $(\cdot)_{-}$ denotes the function defined as $(x)_{-} = x$ for $x\leq 0$ and $(x)_{-}=0$ for $x > 0$.
\cref{eq:lagmultNWeak} is equivalent to the relations governing normal contact (\cref{eq:normalcontact0,eq:normalcontact1,eq:normalcontact2}):
\begin{itemize}
  \item
        if $( {t}_N + \varepsilon_N {g}_N ) > 0$, then it follows from \cref{eq:lagmultNWeak} that $\tensorOne{t}_N=0$.
        Hence, $g_N > 0$ must be satisfied according to the initial assumption.
        This situation corresponds to an open contact condition.
  \item
        If $(t_N + \varepsilon_N g_N) \leq 0$, then from \cref{eq:lagmultNWeak}, we have $\tensorOne{t}_N = (t_N + \varepsilon_N g_N) \tensorOne{n}_f$.
        This implies that $g_N = 0$ and corresponds to a closed contact condition, which can occur in either stick or slip mode.
\end{itemize}
Similarly, \cref{eq:lagmultTWeak} is equivalent to the tangential friction conditions given in  \cref{eq:coulomb0,eq:coulomb1}:
\begin{itemize}
  \item
        if $\|\tensorOne{t}_T + \varepsilon_T \dot{\tensorOne{g}}_T \|_2 \leq \tau_{\max} (t_N, g_N)$, then it follows from \cref{eq:lagmultTWeak} that $\tensorOne{t}_T = \tensorOne{t}_T + \varepsilon_T \dot{\tensorOne{g}}_T$, which implies $\dot{\tensorOne{g}}_T=0$, i.e. the stick contact condition.
  \item
        If $||\tensorOne{t}_T + \varepsilon_T \dot{\tensorOne{g}}_T ||_2 > \tau_{\max} (t_N,g_N)$, the following implicit vector equation for $\tensorOne{t}_T$ is obtained
        \begin{align}
           & \tensorOne{t}_T = \tau_{\max}(t_N,g_N) \frac{\tensorOne{t}_T + \varepsilon_T \dot{\tensorOne{g}}_T}{\|\tensorOne{t}_T + \varepsilon_T \dot{\tensorOne{g}}_T\|_2},
          \label{eq:collinearity1}
        \end{align}
        which implies $||\tensorOne{t}_T||_2 = \left\|\tau_{\max}(t_N,g_N) \frac{\tensorOne{t}_T + \varepsilon_T \dot{\tensorOne{g}}_T}{\|\tensorOne{t}_T + \varepsilon_T \dot{\tensorOne{g}}_T\|_2}\right\|_2 =  \tau_{\max}(t_N,g_N) $. Expressing the tangential traction as $\tensorOne{t}_T = \tau_{\max}(t_N,g_N) \tensorOne{m}_T$, with $\tensorOne{m}_T = \frac{\tensorOne{t}_T}{||\tensorOne{t}_T||_2} $, \cref{eq:collinearity1} can be rearranged as
        \begin{align}
          (|| \tau_{\max}(t_N,g_N) \tensorOne{m}_T + \varepsilon_T \dot{\tensorOne{g}}_T  ||_2 - \tau_{\max}(t_N,g_N)) \tensorOne{m}_T = \varepsilon_T \dot{\tensorOne{g}}_T
          \label{eq:collinearity2},
        \end{align}
        which shows the collinearity between $\tensorOne{t}_T$ and $\dot{\tensorOne{g}}_T $. If $\tensorOne{t}_T$ and $\dot{\tensorOne{g}}_T$ are collinear vectors in the opposite direction, i.e. $\tensorOne{t}_T = - \tau_{\max}(t_N,g_N) \frac{\dot{\tensorOne{g}}_T}{||\dot{\tensorOne{g}}_T||_2}$,  \cref{eq:collinearity2} is satisfied only if $\varepsilon_T ||\dot{\tensorOne{g}}_T||_2 < \tau_{\max}(t_N,g_N) $. Instead, assuming $\tensorOne{t}_T$ and $\dot{\tensorOne{g}}_T$ collinear vectors in the same direction as in \cref{eq:coulomb1}, \cref{eq:collinearity2} is always valid for all values of $\tau_{\max}(t_N,g_N)$ and $\varepsilon_T$. To ensure the uniqueness of the solution, we always choose the parameter $\varepsilon_T$ such that the condition $\varepsilon_T > \frac{\tau_{\max}(t_N,g_N)}{||\dot{\tensorOne{g}}_T||_2}$ is satisfied.
\end{itemize}
\cref{eq:lagmultNWeak,eq:lagmultTWeak} allow the reformulation of \eqref{eq:weakform} as an unconstrained saddle-point problem.
Assuming a converged solution at discrete time level $(n-1)$, the continuous weak form \eqref{eq:weakform} at time level $n$ can be restated as:
find
$(\tensorOne{u}_n, \tensorOne{t}_n) \in \boldsymbol{\mathcal{U}} \times \boldsymbol{\mathcal{M}}$
such that
\begin{linenomath}
  \begin{subequations}
    \begin{alignat}{2}
       & (\nabla^s \tensorOne{v},\tensorTwo{\sigma}(\tensorOne{u}_n))_{\Omega}
      + ( \llbracket \tensorOne{v} \rrbracket, \hat{\tensorOne{t}}_{N}(\tensorOne{u}_n, \tensorOne{t}_{n} ) + \hat{\tensorOne{t}}_{T} ( \tensorOne{u}_n, \tensorOne{u}_{n-1}, \tensorOne{t}_{n} ) )_{\Gamma_f}
      - ( \tensorOne{v}, \bar{\tensorOne{t}} )_{\Gamma_{\sigma}}
      = 0,
       &                                                                                                                                             &
      \quad
      \forall \tensorOne{v} \in \boldsymbol{\mathcal{U}},
      \label{eq:momentumBalanceWALM}                                                                                                                   \\
       & -\frac{1}{\varepsilon_N} ( \tensorOne{\mu}, \tensorOne{t}_{N,n} - \hat{\tensorOne{t}}_{N}(\tensorOne{u}_n, \tensorOne{t}_{n} ) )_{\Gamma_f}
      - \frac{1}{\varepsilon_T}(\tensorOne{\mu}_T, \tensorOne{t}_{T,n} - \hat{\tensorOne{t}}_T( \tensorOne{u}_n, \tensorOne{u}_{n-1}, \tensorOne{t}_{n} ) )_{\Gamma_f}
      = 0,
       &                                                                                                                                             &
      \quad \forall \tensorOne{\mu} \in \boldsymbol{\mathcal{M}},
      \label{eq:constraintsWALM}
    \end{alignat}\label{eq:weakformALM}\null
  \end{subequations}
\end{linenomath}
where $\hat{\tensorOne{t}}_{N}$ and $\hat{\tensorOne{t}}_T$ are the augmented Lagrange multipliers defined as follows:
\begin{linenomath}
  \begin{subequations}
    \begin{align}
       & \hat{\tensorOne{t}}_{N}
      = \hat{\tensorOne{t}}_{N}(\tensorOne{u}_n, \tensorOne{t}_{n} )
      = (\tensorOne{n_f} \cdot ( \tensorOne{t}_{N,n}+\varepsilon_N \tensorOne{g}_N(\tensorOne{u}_n) )_{-} \tensorOne{n_f}, \label{eq:lagmultN} \\
       & \hat{\tensorOne{t}}_{T}
      = \hat{\tensorOne{t}}_T( \tensorOne{u}_n, \tensorOne{u}_{n-1}, \tensorOne{t}_{n} )
      = P_{(0, \tau_{\max}(\hat{t}_N) )}
      (\tensorOne{t}_{T,n} + \varepsilon_{T} \Delta_n{\tensorOne{g}}_T(\tensorOne{u}_n, \tensorOne{u}_{n-1}) ). \label{eq:lagmultT}
    \end{align}\label{eq:lagmult}\null
  \end{subequations}
\end{linenomath}
The weak form \eqref{eq:weakformALM} yields the same solution as the original problem \eqref{eq:weakform}, with the advantage of being fully unconstrained.
Hence, traditional numerical methods can be employed for the solution of \cref{{eq:momentumBalanceWALM,eq:constraintsWALM}} without requiring an active set strategy.
Moreover, \cref{eq:lagmultT} suggests the update procedure for the Lagrange multiplier, which can be employed to solve the problem \eqref{eq:weakformALM} through an iterative sequential scheme, as we will discuss in detail in Section \ref{sec:disc}.
The existence and uniqueness of the solution to the problem \eqref{eq:weakformALM} are established using standard techniques \cite{Kik88, Jar83}.
Some existence results for problems involving Coulomb friction have been demonstrated in \cite{EckJar98} under reasonable assumptions regarding the regularity of the boundary and for sufficiently small friction coefficients \cite{LabRen08}.
However, the uniqueness of the solution for any friction coefficients remains an open problem.

\section{Discretization and solution strategy}
\label{sec:disc}
\subsection{Discretization}
We numerically solve the system of equations \eqref{eq:weakformALM} using a mixed finite-element approach for the spatial discretization.
Let us introduce a triangulation $\mathcal{T}$ of the domain $\overline{\Omega}$, consisting of non-overlapping hexahedral, tetrahedral, and wedge elements that conform to the contact surface $\Gamma_f$, such that $\overline{\Omega} = \bigcup_{K \in \mathcal{T}} K $.
Also, let us define $\mathcal{F}_f$ as the set of quadrilateral and triangular faces $\varphi$ in the triangulation such that $\Gamma_f = \bigcup_{\varphi \in \mathcal{F}_f} {\varphi}$.
Given the triangulation $\mathcal{T}$, we introduce the following finite-dimensional subspaces approximating $\boldsymbol{\mathcal{U}}$ and $ \boldsymbol{\mathcal{M}}$:
\begin{linenomath}
  \begin{subequations}
    \begin{align}
      \boldsymbol{\mathcal{U}}^{h,1}
       & = \{
      \tensorOne{v}^{h} \in \boldsymbol{\mathcal{U}}
      :
      \tensorOne{v}^{h}|_K \in [\mathbb{X}_1(K)]^3, \; \forall K \in \mathcal{T}
      \},
      \label{eq:functionSpace_Uh1}
      \\
      \boldsymbol{\mathcal{M}}^{h}
       & =\{
      \tensorOne{\mu}^{h} \in \boldsymbol{\mathcal{M}}
      :
      \tensorOne{\mu}^{h}|_{\varphi} \in {[\mathbb{P}_0(\varphi)]^{3}}, \; \forall \varphi \in \mathcal{F}_{f}
      \}.
      \label{eq:spaceM_h}
    \end{align}
  \end{subequations}
\end{linenomath}
In \eqref{eq:functionSpace_Uh1}, $\mathbb{X}_1(K)$ represents the space of trilinear nodal basis functions $\mathbb{Q}_1(K)$ for hexahedral elements, the space of linear nodal basis functions $\mathbb{P}_1(K)$ for tetrahedral elements, and for wedge elements the tensor product space of linear nodal basis functions on the triangular face and linear nodal basis functions along the wedge height.
In \eqref{eq:spaceM_h}, three piecewise-constant vector basis functions are associated with each face used to mesh $\Gamma_f$, namely $\{ \chi_{\varphi} \tensorOne{n}_f, \chi_{\varphi} \tensorOne{m}_1, \chi_{\varphi} \tensorOne{m}_2 \}$, with $\chi_{\varphi}$ the characteristic function of the face $\varphi \in \mathcal{F}_f$, and $\tensorOne{m}_1$ and $\tensorOne{m}_2$ to unit vector that, together with $\tensorOne{n}_f$, form a complete orthonormal local reference frame.
We will denote by $\{ \tensorOne{\eta}_i \}$ and $\{ \tensorOne{\mu}_i \}$ the sets of vector basis functions for $\boldsymbol{\mathcal{U}}^{h,1}$ and $\boldsymbol{\mathcal{M}}^{h}$, respectively.
Figure \ref{fig:discretization} illustrates the degrees of freedom (dof) for an hexahedral mesh with interface elements representing a discontinuity (i.e. a fault).

The fully discrete weak form of \eqref{eq:weakformALM} is expressed as:
find $(\tensorOne{u}^h_n, \tensorOne{t}^h_n ) \in \boldsymbol{\mathcal{U}}^{h,1} \times \boldsymbol{\mathcal{M}}^{h}$ such that
\begin{linenomath}
  \begin{subequations}
    \begin{alignat}{2}
       & (\nabla^s \tensorOne{v}^h,\tensorTwo{\sigma}(\tensorOne{u}^h_n))_{\Omega}
      + ( \llbracket \tensorOne{v}^h \rrbracket, \hat{\tensorOne{t}}_{N}(\tensorOne{u}^h_n, \tensorOne{t}^h_{n} ) + \hat{\tensorOne{t}}_{T} ( \tensorOne{u}^h_n, \tensorOne{u}^h_{n-1}, \tensorOne{t}^h_{n} ) )_{\Gamma_f}
      - ( \tensorOne{v}^h, \bar{\tensorOne{t}} )_{\Gamma_{\sigma}}
      = 0,
       &                                                                                                                                                     &
      \quad
      \forall \tensorOne{v}^h \in \boldsymbol{\mathcal{U}}^{h,1},
      \label{eq:momentumBalanceWALMdisc}                                                                                                                       \\
       & -\frac{1}{\varepsilon_N} ( \tensorOne{\mu}^h, \tensorOne{t}^h_{N,n} - \hat{\tensorOne{t}}_{N}(\tensorOne{u}^h_n, \tensorOne{t}^h_{n} ) )_{\Gamma_f}
      - \frac{1}{\varepsilon_T}(\tensorOne{\mu}_T, \tensorOne{t}^h_{T,n} - \hat{\tensorOne{t}}_T( \tensorOne{u}^h_n, \tensorOne{u}^h_{n-1}, \tensorOne{t}^h_{n} ) )_{\Gamma_f}
      = 0,
       &                                                                                                                                                     &
      \quad \forall \tensorOne{\mu}^h \in \boldsymbol{\mathcal{M}}^h.
      \label{eq:weakformALMdisc}
    \end{alignat}
    \label{eq:weakformDisc}\null
  \end{subequations}
\end{linenomath}

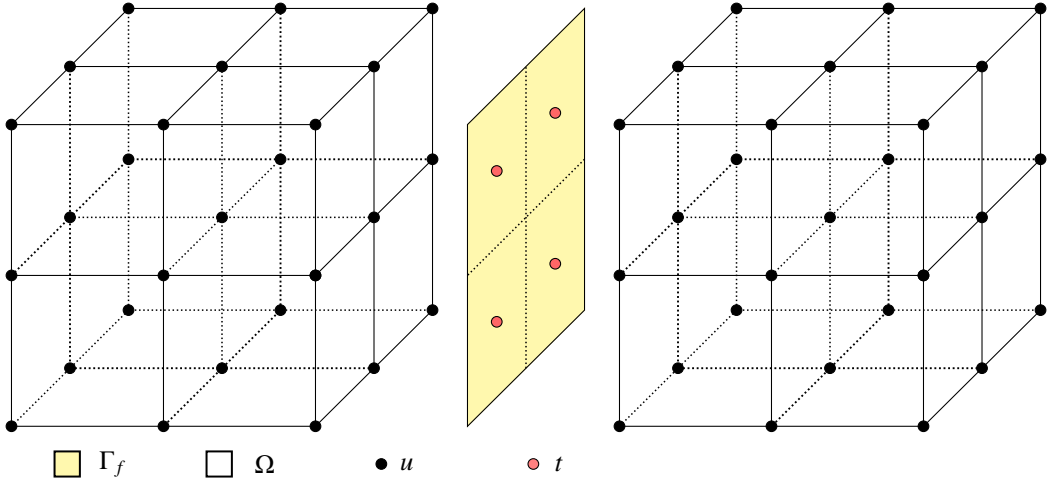
\begin{figure}[h]
  \centering
  \begin{tikzpicture}[scale=1]
    \def\slice{2.0}
    \def\side{4}

    \filldraw[color=yellow!40] (2.0 + \side,0,0) -- (2.0 + \side,\side,0) -- (2.0 + \side,\side,\side) -- (2.0 + \side,0,\side) -- cycle;

    \draw[line width=0.25mm,densely dotted] (0,\slice,\side) -- (0,\slice,0) -- (\side,\slice,0);
    \draw (0,\slice,\side) -- (\side,\slice,\side) -- (\side,\slice,0);

    \draw[line width=0.25mm,densely dotted] (4.0 + \side + 0,\slice,\side) -- (4.0 + \side + 0,\slice,0) -- (4.0 + \side + \side,\slice,0);
    \draw (4.0 + \side + 0,\slice,\side) -- (4.0 + \side + \side,\slice,\side) -- (4.0 + \side + \side,\slice,0);

    \draw[line width=0.25mm,densely dotted] (\side,0,\slice) -- (0,0,\slice) -- (0,\side,\slice);
    \draw (\side,0,\slice) -- (\side,\side,\slice) -- (0,\side,\slice);

    \draw[line width=0.25mm,densely dotted] (4.0 + \side + \side,0,\slice) -- (4.0 + \side + 0,0,\slice) -- (4.0 + \side + 0,\side,\slice);
    \draw (4.0 + \side + \side,0,\slice) -- (4.0 + \side + \side,\side,\slice) -- (4.0 + \side + 0,\side,\slice);

    \draw[line width=0.25mm,densely dotted] (\slice,\side,0) -- (\slice,0,0) -- (\slice,0,\side);
    \draw (\slice,\side,0) -- (\slice,\side,\side) -- (\slice,0,\side);

    \draw[line width=0.25mm,densely dotted] (4.0 + \side + \slice,\side,0) -- (4.0 + \side + \slice,0,0) -- (4.0 + \side + \slice,0,\side);
    \draw (4.0 + \side + \slice,\side,0) -- (4.0 + \side + \slice,\side,\side) -- (4.0 + \side + \slice,0,\side);

    \draw (\side,0,0) -- (\side,\side,0) -- (0,\side,0);
    \draw (0,0,\side) -- (\side,0,\side) -- (\side,\side,\side) -- (0,\side,\side) -- (0,0,\side);
    \draw (\side,0,0) -- (\side,0,\side);
    \draw (\side,\side,0) -- (\side,\side,\side);
    \draw (0,\side,0) -- (0,\side,\side);
    \draw[line width=0.2mm,densely dotted] (0,0,\side) -- (0,0,0) -- (\side,0,0);
    \draw[line width=0.2mm,densely dotted] (0,\side,0) -- (0,0,0);

    \draw (4.0 + \side + \side,0,0) -- (4.0 + \side + \side,\side,0) -- (4.0 + \side + 0,\side,0);
    \draw (4.0 + \side + 0,0,\side) -- (4.0 + \side + \side,0,\side) -- (4.0 + \side + \side,\side,\side) -- (4.0 + \side + 0,\side,\side) -- (4.0 + \side + 0,0,\side);
    \draw (4.0 + \side + \side,0,0) -- (4.0 + \side + \side,0,\side);
    \draw (4.0 + \side + \side,\side,0) -- (4.0 + \side + \side,\side,\side);
    \draw (4.0 + \side + 0,\side,0) -- (4.0 + \side + 0,\side,\side);
    \draw[line width=0.2mm,densely dotted] (4.0 + \side + 0,0,\side) -- (4.0 + \side + 0,0,0) -- (4.0 + \side + \side,0,0);
    \draw[line width=0.2mm,densely dotted] (4.0 + \side + 0,\side,0) -- (4.0 + \side + 0,0,0);

    \draw (2.0 + \side,0,0) -- (2.0 + \side,\side,0) -- (2.0 + \side,\side,\side) -- (2.0 + \side,0,\side) -- cycle;
    \draw[line width=0.2mm,densely dotted] (2.0 + \side + 0,\slice,0) -- (2.0 + \side + 0,\slice,\side);
    \draw[line width=0.2mm,densely dotted] (2.0 + \side + 0,0,\slice) -- (2.0 + \side + 0,\side,\slice);

    \draw[line width=0.2mm,densely dotted,color=black] (\slice,0,\slice) -- (\slice,\side,\slice);
    \draw[line width=0.2mm,densely dotted,color=black] (0,\slice,\slice) -- (\side,\slice,\slice);
    \draw[line width=0.2mm,densely dotted,color=black] (\slice,\slice,0) -- (\slice,\slice,\side);

    \draw[line width=0.2mm,densely dotted,color=black] (4.0 + \side + \slice,0,\slice) -- (4.0 + \side + \slice,\side,\slice);
    \draw[line width=0.2mm,densely dotted,color=black] (4.0 + \side + 0,\slice,\slice) -- (4.0 + \side + \side,\slice,\slice);
    \draw[line width=0.2mm,densely dotted,color=black] (4.0 + \side + \slice,\slice,0) -- (4.0 + \side + \slice,\slice,\side);

    \draw[fill=black] (\side,0,0) circle (0.2em);
    \draw[fill=black] (\side,\side,0) circle (0.2em);
    \draw[fill=black] (\side,\side,\side) circle (0.2em);
    \draw[fill=black] (\side,0,\side) circle (0.2em);
    \draw[fill=black] (0,0,\side) circle (0.2em);
    \draw[fill=black] (0,\side,\side) circle (0.2em);
    \draw[fill=black] (0,\side,0) circle (0.2em);

    \draw[fill=black] (\slice,\slice,\slice) circle (0.2em);
    \draw[fill=black] (0,\slice,\slice) circle (0.2em);
    \draw[fill=black] (\slice,0,\slice) circle (0.2em);
    \draw[fill=black] (\slice,\slice,0) circle (0.2em);
    \draw[fill=black] (\side,\slice,\slice) circle (0.2em);
    \draw[fill=black] (\slice,\side,\slice) circle (0.2em);
    \draw[fill=black] (\slice,\slice,\side) circle (0.2em);

    \draw[fill=black] (0,\side,\slice) circle (0.2em);
    \draw[fill=black] (\side,0,\slice) circle (0.2em);
    \draw[fill=black] (\side,\slice,0) circle (0.2em);
    \draw[fill=black] (\side,\slice,\side) circle (0.2em);
    \draw[fill=black] (\slice,\side,\side) circle (0.2em);
    \draw[fill=black] (\side,\slice,\side) circle (0.2em);
    \draw[fill=black] (\side,\side,\slice) circle (0.2em);
    \draw[fill=black] (\slice,0,\side) circle (0.2em);
    \draw[fill=black] (0,\slice,\side) circle (0.2em);
    \draw[fill=black] (0,\slice,0) circle (0.2em);
    \draw[fill=black] (0,0,\slice) circle (0.2em);
    \draw[fill=black] (\slice,0,0) circle (0.2em);
    \draw[fill=black] (\slice,\side,0) circle (0.2em);

    \draw[fill=black] (0,0,0) circle (0.2em);

    \draw[fill=black] (4.0 + \side + \side,0,0) circle (0.2em);
    \draw[fill=black] (4.0 + \side + \side,\side,0) circle (0.2em);
    \draw[fill=black] (4.0 + \side + \side,\side,\side) circle (0.2em);
    \draw[fill=black] (4.0 + \side + \side,0,\side) circle (0.2em);
    \draw[fill=black] (4.0 + \side + 0,0,\side) circle (0.2em);
    \draw[fill=black] (4.0 + \side + 0,\side,\side) circle (0.2em);
    \draw[fill=black] (4.0 + \side + 0,\side,0) circle (0.2em);

    \draw[fill=black] (4.0 + \side + \slice,\slice,\slice) circle (0.2em);
    \draw[fill=black] (4.0 + \side + 0,\slice,\slice) circle (0.2em);
    \draw[fill=black] (4.0 + \side + \slice,0,\slice) circle (0.2em);
    \draw[fill=black] (4.0 + \side + \slice,\slice,0) circle (0.2em);
    \draw[fill=black] (4.0 + \side + \side,\slice,\slice) circle (0.2em);
    \draw[fill=black] (4.0 + \side + \slice,\side,\slice) circle (0.2em);
    \draw[fill=black] (4.0 + \side + \slice,\slice,\side) circle (0.2em);

    \draw[fill=black] (4.0 + \side + 0,\side,\slice) circle (0.2em);
    \draw[fill=black] (4.0 + \side + \side,0,\slice) circle (0.2em);
    \draw[fill=black] (4.0 + \side + \side,\slice,0) circle (0.2em);
    \draw[fill=black] (4.0 + \side + \side,\slice,\side) circle (0.2em);
    \draw[fill=black] (4.0 + \side + \slice,\side,\side) circle (0.2em);
    \draw[fill=black] (4.0 + \side + \side,\slice,\side) circle (0.2em);
    \draw[fill=black] (4.0 + \side + \side,\side,\slice) circle (0.2em);
    \draw[fill=black] (4.0 + \side + \slice,0,\side) circle (0.2em);
    \draw[fill=black] (4.0 + \side + 0,\slice,\side) circle (0.2em);
    \draw[fill=black] (4.0 + \side + 0,\slice,0) circle (0.2em);
    \draw[fill=black] (4.0 + \side + 0,0,\slice) circle (0.2em);
    \draw[fill=black] (4.0 + \side + \slice,0,0) circle (0.2em);
    \draw[fill=black] (4.0 + \side + \slice,\side,0) circle (0.2em);

    \draw[fill=black] (4.0 + \side + 0,0,0) circle (0.2em);

    \draw[fill=red!60] (2.0 + \side,1.0,1.0) circle (0.2em);
    \draw[fill=red!60] (2.0 + \side,1.0,3.0) circle (0.2em);
    \draw[fill=red!60] (2.0 + \side,3.0,1.0) circle (0.2em);
    \draw[fill=red!60] (2.0 + \side,3.0,3.0) circle (0.2em);

    \node at (2*\side/6,-\slice/4,\side) {$\Gamma_{f}$};
    \node at (5*\side/6,-\slice/4,\side) {$\Omega$};
    \node at (7.8*\side/6,-\slice/4,\side) {$u$};
    \node at (10.8*\side/6,-\slice/4,\side) {$t$};
    \draw[black, line width=0.075em, fill=white] (3.85*\side/6,-1*\side/6,\side) rectangle (4.35*\side/6,-0.5*\side/6,\side);
    \draw[black, line width=0.075em, fill=yellow!40] (0.85*\side/6,-1*\side/6,\side) rectangle (1.35*\side/6,-0.5*\side/6,\side);
    \draw[fill=black] (7.3*\side/6,-\slice/4,\side) circle (0.2em);
    \draw[fill=red!50] (10.3*\side/6,-\slice/4,\side) circle (0.2em);
  \end{tikzpicture}
  \caption{Illustration of displacement $u$ (black dots) and traction degrees of freedom $t$  (red dots) on interface elements.}
  \label{fig:discretization}
\end{figure}

\subsection{Solution strategy}\label{subsec:solutionStrategy}
The discrete problem \eqref{eq:weakformDisc} forms a system of nonlinear equations.
A widely used strategy to solve this system is the Uzawa procedure~\cite{Lau03,Wri06}, an iterative optimization method that alternates between two steps.
Denoting the Uzawa iteration count by $k$, the first step involves computing the displacement $\tensorOne{u}_n^{h,k+1}$, given the discrete displacement from the previous timestep $\tensorOne{u}_{n-1}^{h}$ and the contact stress $\tensorOne{t}_n^{h,k}$, by solving the discrete linear momentum balance \cref{eq:momentumBalanceWALMdisc}.
The discrete problem can be expressed as a nonlinear system of algebraic equations which is solved for the current solution vector $\Vec{u} = \{ u_i \}$ --- the coefficient vector used to expand $\tensorOne{u}_n^{h,k+1}$ in terms of basis functions $\tensorOne{\eta}_i$ --- using Newton's method as follows.
Given an initial guess $\vec{u}_0$, for $\ell  = 0, 1, \ldots$, until convergence
\begin{linenomath}
  \begin{subequations}
    \begin{alignat}{2}
       & \text{solve}
       &              &
      \quad
      {\Mat{A}}_{uu}(\vec{u}_{\ell}) \delta \vec{u} = - \vec{r}_u(\vec{u}_{\ell}),
      \label{eq:NR1}
      \\
       & \text{set}
       &              &
      \quad
      \vec{u}_{\ell+1} = \vec{u}_{\ell} + \delta \vec{u}
      \label{eq:NR2}
    \end{alignat}
    \label{eq:NR1NR2}\null
  \end{subequations}
\end{linenomath}
where ${\Mat{A}}_{uu}(\vec{u}_{\ell}) = (\partial{\vec{r}_u}/\partial{\vec{u}})(\vec{u}_{\ell})$.
The matrix expressions $\vec{r}_u$ and $\Mat{A}_{uu}$ are provided in \ref{sec:appendixA}.

Using the solution to \eqref{eq:NR1NR2}, the second step of the Uzawa procedure involves correcting the Lagrange multipliers to enforce the contact constraints.
Based on~\cref{eq:weakformALMdisc}, and recalling that $\boldsymbol{\mathcal{M}}^h$ is a discrete space of piecewise constant functions, the updated tractions can be evaluated on each face $\varphi \in \mathcal{F}_f$  as a function of the traction at the previous Uzawa iteration $\tensorOne{t}_{n,\varphi}^{h,k} = \chi_{\varphi} ( t_{\varphi,N}^{k} \tensorOne{n}_f + t_{\varphi,1}^{k} \tensorOne{m}_1 + t_{\varphi,2}^{k} \tensorOne{m}_2 )$ and the average normal and tangential displacement jump
\begin{align}
  g_{N,\varphi}                      & = \frac{1}{|\varphi|} \int_{\varphi}  \tensorOne{g}_N(\tensorOne{u}_{n}^{h,k+1,\ell} ) \cdot \tensorOne{n}_f \; \mathrm{d}\varphi,
                                     &
  \Delta_n \tensorOne{g}_{T,\varphi} & = \frac{1}{|\varphi|} \int_{\varphi} \Delta_n \tensorOne{g}_T(\tensorOne{u}_{N}^{h,k+1,\ell}, \tensorOne{u}_{n-1}^{h} )  \; \mathrm{d}\varphi,
\end{align}
with $|\varphi|$ the area of face $\varphi$, using the following expressions:
\begin{linenomath}
  \begin{subequations}
    \begin{alignat}{2}
       & t_{\varphi,N}^{k+1}
      = (t_{\varphi,N}^{k} + \varepsilon_N g_{N,\varphi})_{-}
      \label{eq:lagmultNUpd}
      \\
       & t_{\varphi,i}^{k+1}
      = \tensorOne{m}_i \cdot P_{(0, c - \tan(\theta) t_{\varphi,N}^{k+1} )}
      \left(t_{\varphi,1}^{k} \tensorOne{m}_1 + t_{\varphi,2}^{k} \tensorOne{m}_2 + \varepsilon_{T} \Delta_n \tensorOne{g}_{T,\varphi} \right), \quad i \in \{1, 2\}.
      \label{eq:lagmultTUpd}
    \end{alignat}
    \label{eq:lagmultUpd}\null
  \end{subequations}
\end{linenomath}
\cref{alg:uzawaproc} summarizes the steps outlined above for solving the discrete system \eqref{eq:weakformDisc}.
\begin{algorithm}
  \caption{Uzawa Procedure}\label{alg:uzawaproc}
  \begin{algorithmic}[1]
    \State Initialize variables
    \While{ convergence }
    \While{$\|\vec{r}_u\|_2 \leq \tau $ }
    \State Compute $\vec{r}_u$ and $\frac{\partial \vec{r}_u}{\partial \vec{u}}={\Mat{A}}_{uu}$
    \State Compute $\delta \vec{u}$ solving \cref{eq:NR1}
    \State $\vec{u}_{\ell+1} = \vec{u}_{\ell} + \delta \vec{u}$
    \EndWhile
    \State Check for convergence
    \State Update $\tensorOne{t}_n^{h,k+1}$ using \cref{eq:lagmultUpd}
    \EndWhile
  \end{algorithmic}
\end{algorithm}
Note that at each update of the Lagrange multiplier, a system of nonlinear equations must be solved (\cref{eq:NR1}).
This can be particularly challenging during the first time step when the Lagrange multiplier may be significantly different from its true value.
In such cases, solving a nonlinear equation can be both expensive and ineffective since the value is only used temporarily.
To optimize and accelerate convergence, an alternative solving strategy can be employed.
Specifically, the Lagrange multiplier can be updated at each Newton iteration.
This alternative procedure is outlined in \cref{alg:alternativeAlgo}.
\begin{algorithm}
  \caption{Alternative procedure}\label{alg:alternativeAlgo}
  \begin{algorithmic}[1]
    \State Initialize variables
    \While{ convergence }
    \State Compute $\vec{r}_u$ and $\frac{\partial \vec{r}_u}{\partial \vec{u}}={\Mat{A}}_{uu}$
    \State Compute $\delta \vec{u}$ solving \cref{eq:NR1}
    \State $\vec{u}_{\ell+1} = \vec{u}_{\ell} + \delta \vec{u}$
    \State Check for convergence
    \State Update $\tensorOne{t}_n^{h,k+1}$ using \cref{eq:lagmultUpd}
    \EndWhile
  \end{algorithmic}
\end{algorithm}
This second procedure can be viewed as an Augmented Lagrangian approach with an adaptive tangential penalty coefficient, which depends on the directionality and the variation of the normal traction with respect to the normal jump.
However, in both cases, the linearization of the return mapping leads to a non-symmetric linear system when frictional contact is involved.

From the perspective of linear solvers, it is often more advantageous to work with symmetric linear systems.
By neglecting the contribution from the derivative of the tangential traction with respect to the normal displacement jump, we obtain a symmetric Jacobian system (see \ref{sec:appendixA}).
In other words, instead of implementing the return mapping in \cref{eq:lagmultTUpd}, the Coulomb condition is enforced based on a previously obtained estimate of the normal traction, i.e.
\begin{align}
   & t_{\varphi,i}^{k+1}
  = \tensorOne{m}_i \cdot P_{(0, c - \tan(\theta) t_{\varphi,N}^{k} )}
  \left(t_{\varphi,1}^{k} \tensorOne{m}_1 + t_{\varphi,2}^{k} \tensorOne{m}_2 + \varepsilon_{T} \Delta_n \tensorOne{g}_{T,\varphi} \right), \quad i \in \{1, 2\}.
  \label{eq:lagmultTUpdSym}\null
\end{align}

\subsection{Stability}\label{subsec:stability}
To analyze stability, we focus on the saddle-point problem associated with the stick condition, as it represents the most critical case for assessing the stability of the discretization.
In particular, as we approach convergence, i.e., $\tensorOne{t}_n^{h,k+1} \simeq \tensorOne{t}_n^{h,k} = \tensorOne{t}_n^{h}$, the discrete problem becomes:
\begin{linenomath}
  \begin{subequations}
    \begin{alignat}{2}
       & a(\tensorOne{v}^h, \tensorOne{u}_n^h)
      + b(\tensorOne{t}_n^{h}, \tensorOne{v}^h)
      = l_1(\tensorOne{v}^h),
       &                                        &
      \quad
      \forall \tensorOne{v}^h \in \boldsymbol{\mathcal{U}}^{h,1},
      \\
       & b(\tensorOne{\mu}^h,\tensorOne{u}_n^h)
      = l_2(\tensorOne{\mu}^h),
       &                                        &
      \quad
      \forall \tensorOne{\mu}^h \in \boldsymbol{\mathcal{M}}^h,
    \end{alignat}\label{eq:SPdisc}\null
  \end{subequations}
\end{linenomath}
where the bilinear forms ${a}(\tensorOne{v}^h, \tensorOne{u}_n^h)$ and $b(\tensorOne{\mu}^{h}, \tensorOne{u}_n^h)$ are given by:
\begin{linenomath}
  \begin{subequations}
    \begin{alignat}{2}
       & {a}(\tensorOne{v}^h, \tensorOne{u}_n^h)
      = (\tensorOne{v}^h, \tensorTwo{\sigma}(\tensorOne{u}_n^h))_{\Omega}
      + (\llbracket \tensorOne{v}^h \rrbracket, \tensorTwo{\varepsilon}^{-1} \cdot \llbracket \tensorOne{u}_n^{h} \rrbracket)_{\Gamma_f},
      \\
       & b(\tensorOne{\mu}^h, \tensorOne{u}_n^h)
      = (\tensorOne{\mu}^h,  \llbracket \tensorOne{u}_n^{h} \rrbracket)_{\Gamma_f}.
    \end{alignat}\label{eq:abForm}\null
  \end{subequations}
\end{linenomath}
The stability of problem \eqref{eq:SPdisc} is guaranteed if the following inf-sup condition holds:
\begin{equation}
  \inf_{\tensorOne{\mu}^h \in \boldsymbol{\mathcal{M}}^h } \sup_{\tensorOne{u}_n^h \in \boldsymbol{\mathcal{U}}^{h,1} } \frac{b(\tensorOne{\mu}^h, \tensorOne{u}_n^{h})}{\|{\tensorOne{u}_n^{h}\|_{\mathcal{U}} \|\tensorOne{\mu} }\|_{\mathcal{M}}} \geq \beta > 0.
  \label{eq:infsup}
\end{equation}
%
%
%
Although the spaces $\boldsymbol{\mathcal{U}}^{h,1}$ and $\boldsymbol{\mathcal{M}}^{h}$ are motivated by physical considerations and are commonly used in engineering applications, they do not, unfortunately, constitute an inf-sup stable pair for the discretization of problem \eqref{eq:weakformDisc}.
Moreover, the inf-sup condition remains critical even when employing iterative solution strategies such as the Uzawa procedure; see \cite{AroCasHamWhiTch24} for a discussion in the context of sequential solvers in poromechanics.
The next section presents and discusses a stabilization strategy to address these issues.

\begin{remark}
  It is important to mention that the strategy presenting in this work is not the only type of augmentation that can be applied.
  In \cite{BofBreFor13}, a general approach to augmenting a saddle point problem is discussed.
  Notably, depending on the type of augmentation used, the system can achieve stability and regain the coercivity of block (1,1).
  In our case, the specific augmentation enables us to restore the coercivity of block (1,1), but we still need to establish the stability of the saddle point problem.
\end{remark}

\section{Stabilization}
To satisfy the LBB condition \eqref{eq:infsup}, we introduce a well-known stabilization technique that involves enriching the piecewise linear continuous finite element space, $\boldsymbol{\mathcal{U}}^{h,1}$, with face bubble functions.
In the following section, we present the pair of spaces $(\boldsymbol{\mathcal{U}}^{h}, \boldsymbol{\mathcal{M}}^{h})$, where $\boldsymbol{\mathcal{U}}^{h,1} \subset \boldsymbol{\mathcal{U}}^{h}$.

\input{bubble_HEX_figs_final}
\input{bubble_TET_figs_final}
\input{bubble_WEDGE_figs_final}

\subsection{Stabilization by face bubble functions}
\label{sec:bubbles}
Let us begin by introducing the face bubble functions $b_{K,\varphi}$.
For each finite element $K$ that shares a face $\varphi$ with the contact surface $\Gamma_f$, the bubble functions are defined as follows:
\begin{itemize}
  \item
        $b_{K,\varphi} \in H^1(K)$ must have local support such that $\operatorname{supp}(b_{K,\varphi}) \subset K$;
  \item
        it is required that $\int_{\varphi} b_{\varphi} \; \mathrm{d} \Gamma \ne 0$ , where $b_{\varphi}$ denotes trace of $b_{K,\varphi}$ onto $\varphi$.
\end{itemize}
Several options can be considered, but the simplest form is to select a function that exhibits a quadratic form on the face $\varphi$ and vanishes on the other faces of the boundary $\partial K$ of the element $K$.
Specifically, on the reference element for each cell type considered in this work, the bubble function associated with face $\hat{\varphi}$ are defined as follows:
\begin{linenomath}
  \begin{subequations}
    \begin{alignat}{2}
      \text{reference hexahedron}:
       &
      \quad
      \hat{b}_{\hat{K}, \hat{\phi}}
      =
      \frac{1}{2} ( 1 \pm \hat{x}_j ) \prod_{i=1, i \ne j}^{3} (1 - \hat{x}_i^2),
      \label{eq:bubbleFunH}
      \\
      \text{reference tetrahedron}:
       &
      \quad
      \hat{b}_{\hat{K}, \hat{\phi}}
      =
      \prod_{i=1, i \ne j}^{4} \hat{\lambda}_{i,\hat{K}},
      \label{eq:bubbleFunT}
      \\
      \text{reference wedge}:
       &
      \quad
      \hat{b}_{\hat{K}, \hat{\phi}}
      =
      \begin{dcases}
        \frac{1}{2} ( 1 \pm \hat{x}_3 ) \prod_{i=1}^{2} \hat{\lambda}_{i,\hat{T}}
         &
        \text{(triangular faces)},
        \\
        ( 1 \pm \hat{x}_3^2 ) \prod_{i=1, i \ne j}^{2} \hat{\lambda}_{i,\hat{T}}
         &
        \text{(quadrilateral faces)},
      \end{dcases}
      \label{eq:bubbleFunW}
    \end{alignat}
  \end{subequations}
\end{linenomath}
where $\hat{x}_i$ are the Cartesian coordinates in the parent element system, and $\hat{\lambda}_{i,\hat{K}}$ and $\hat{\lambda}_{i,\hat{T}}$ denote the barycentric coordinates on the reference tetrahedron and on the triangular faces of the wedge reference element, respectively.
In \cref{eq:bubbleFunH}, $j$ denotes the index of the coordinate axis orthogonal to the face $\hat{\varphi}$.
In \cref{eq:bubbleFunT,eq:bubbleFunW}, $j$ is the index of the vertex opposite to the face $\varphi$ in $\hat{K}$ and $\hat{T}$, respectively.
\cref{fig:bubbles-hex,fig:bubbles-tet,fig:bubbles-wedge} illustrate example bubble functions both on the enriched face and within the reference element $\hat{K}$,  for hexahedral, tetrahedral, and wedge elements.

The stabilized discrete space $\boldsymbol{\mathcal{U}}^{h}$ is defined as follows:
\begin{alignat}{2}
   & \boldsymbol{\mathcal{U}}^{h}
  = \boldsymbol{\mathcal{U}}^{h,1}
  \oplus \boldsymbol{\mathcal{U}}^{b},
   &                              &
  \quad
  \boldsymbol{\mathcal{U}}^{b} = \operatorname{span}\left \{ [ b_{K,\varphi} ]^d  \right \}.
\end{alignat}
The set of basis functions for the space $\boldsymbol{\mathcal{U}}^{b}$ is denoted by  $\{ \tensorOne{\beta}_i \}$.
The degrees of freedom associated with $\boldsymbol{\mathcal{U}}^{h}$ are the values at the vertices of $\mathcal{T}$ and the total flux through of $\varphi$ of $(I-\Pi_1)\tensorOne{u}$, where $\Pi_1$ is the standard piecewise linear interpolant (Clement interpolant), $\Pi_1 : C(\bar{\Omega}) \rightarrow \boldsymbol{\mathcal{U}}^{h,1}$.
Then, the canonical interpolant, $\Pi_h: C(\bar{\Omega}) \rightarrow \boldsymbol{\mathcal{U}}^{h}$, is defined as:
\begin{alignat}{2}
   & \Pi_h \tensorOne{u}
  = \Pi_1 \tensorOne{u}
  + \sum_{\varphi \in \mathcal{F}_f }
  \left (
  b^{-}_{K, \varphi} \tensorOne{u}_{\varphi}^{-}  +  b^{+}_{K, \varphi} \tensorOne{u}_{\varphi}^{+}
  \right),
  \label{eq:inter_h}
\end{alignat}
with:
\begin{alignat}{2}
   & \tensorOne{u}_{\varphi}^{\pm}
  = \frac{1}{\int_{\varphi} b_{\varphi} \operatorname{d \Gamma} }  \int_{\Gamma} \gamma^{\pm}( \tensorOne{u} - \Pi_1 \tensorOne{u} )  \operatorname{d \Gamma}.
  \label{eq:defBubbleDof}
\end{alignat}
The enriched discrete space $\boldsymbol{\mathcal{U}}^{h}$ is endowed with the scaled broken Sobolev norm defined as follows:
\begin{alignat}{2}
   & \| u \|_U
  \equiv \normH{u}^2
  = \diam{\Omega}^{-2} \normL{u}^2 + \snormH{u}^2,
  \label{eq:normH_0}
\end{alignat}
where we denote $\|\cdot\|_{H^1(\Omega)} $ with $\normH{\cdot}$ to simplify the notation.
Before proceeding to the proof of inf-sup stability, let us state the following lemma, which presents an important inequality applicable to face bubble functions.
\begin{lemma}
  Let $\mathcal{T}$ be a shape-regular mesh with a conforming internal surface $\mathcal{F}_f$.
  Then, the following inequality holds for any $(K, \varphi) \in \mathcal{T} \times \mathcal{F}_f$ and for any $b^{\pm}_{K,\varphi}$ and for any $b_{\varphi}$ defined as above:
  \begin{alignat}{2}
     & \normH{b^{\pm}_{K,\varphi}}^2
    \le {C} {\diam{\Omega}^{-1}} \normLg{b_{\varphi}}^2,
    \label{eq:bound_bubble_H1_Lg_2}
  \end{alignat}
  with the constant $C$ depends on shape regularity and mesh element type.
\end{lemma}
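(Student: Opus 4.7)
The plan is to prove \eqref{eq:bound_bubble_H1_Lg_2} via the classical reference-element scaling argument for bubble functions.

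First I would observe that on each reference element type $\hat{K}$ (hexahedron, tetrahedron, or wedge) the bubble $\hat{b}_{\hat{K},\hat{\varphi}}$ defined in \eqref{eq:bubbleFunH}--\eqref{eq:bubbleFunW} is a fixed polynomial whose trace $\hat{b}_{\hat{\varphi}}$ on $\hat{\varphi}$ has a strictly positive integral by construction. Consequently, the three quantities $\normL{\hat{b}}^2$, $\snormH{\hat{b}}^2$ and $\|\hat{b}_{\hat{\varphi}}\|_{L^2(\hat{\varphi})}^2$ are universal positive constants depending only on the element type, and the claimed inequality is trivial on $\hat{K}$.

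Next I would transfer the estimate to the physical element $K$ through the element map $F_K : \hat{K} \to K$, affine for tetrahedra and with a tensor-product structure for hexahedra and wedges. Under shape regularity of $\mathcal{T}$, the map satisfies $|\det DF_K| \sim h_K^3$, $\|DF_K\| \sim h_K$, and $\|DF_K^{-1}\| \sim h_K^{-1}$, with constants depending only on the regularity parameter. Standard change of variables in the volume and surface integrals then yields the scalings
\[
\normL{b_{K,\varphi}}^2 \sim h_K^3, \qquad
\snormH{b_{K,\varphi}}^2 \sim h_K, \qquad
\normLg{b_{\varphi}}^2 \sim h_K^2,
\]
where the hidden constants inherit their dependencies only from the reference step and from shape regularity.

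Substituting into the scaled broken Sobolev norm \eqref{eq:normH_0},
\[
\normH{b_{K,\varphi}}^2 \lesssim \diam{\Omega}^{-2}\, h_K^3 + h_K,
\]
and comparing with $\diam{\Omega}^{-1}\, \normLg{b_{\varphi}}^2 \sim \diam{\Omega}^{-1}\, h_K^2$, one obtains the stated bound with $C$ depending only on the shape regularity of $\mathcal{T}$ and on the reference element type (through the three reference constants of Step~1).

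The whole proof ultimately reduces to a finite-dimensional identity on $\hat{K}$, so the only non-trivial point — and therefore the main obstacle — is the dimensional bookkeeping of the powers of $h_K$ coming from the Jacobian of the element map, and their reconciliation with the $\diam{\Omega}$-scaled definition of the broken $H^1$ norm in \eqref{eq:normH_0}. Once this accounting is carried out, the constant $C$ can in principle be made explicit in terms of the reference polynomials in \eqref{eq:bubbleFunH}--\eqref{eq:bubbleFunW} and of the shape-regularity constant.
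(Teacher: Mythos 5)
Your reference-element scalings are correct, but the final comparison does not close, and it fails at exactly the step you deferred as ``bookkeeping.'' With the domain-scaled norm \eqref{eq:normH_0}, your own estimates give $\normH{b_{K,\varphi}}^2 \sim \diam{\Omega}^{-2}h_K^3 + h_K$, the second term coming from $\snormH{b_{K,\varphi}}^2 \sim h_K$, while the right-hand side of \eqref{eq:bound_bubble_H1_Lg_2} scales like $\diam{\Omega}^{-1}\normLg{b_{\varphi}}^2 \sim \diam{\Omega}^{-1}h_K^2$. The volume term is harmless (it only needs $h_K \le C\,\diam{\Omega}$), but the seminorm term would require $h_K \le C\,\diam{\Omega}^{-1}h_K^2$, i.e. $C \gtrsim \diam{\Omega}/h_K$, which blows up under mesh refinement. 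So the bound cannot be reached by pure local scaling plus shape regularity, and asserting that ``one obtains the stated bound'' skips the only point that is actually in question.

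The paper avoids this by never estimating $\snormH{b_{K,\varphi}}$ directly: it first invokes the inverse-type bound \eqref{eq:normH_1} (built from \eqref{eq:invIneq}), valid on the discrete space, to absorb the $H^1$ seminorm and reduce the left-hand side to $c\,\diam{\Omega}^{-2}\normL{b_{K,\varphi}}^2$; only then does it pass to the reference element, use the equivalence $\normL{\hat{b}^{\pm}_{\hat{K},\hat{\varphi}}} = c\,\normLg{\hat{b}_{\hat{\varphi}}}$ of \eqref{eq:bound_bubble_L_Lg}, map back with the measure factors $\meas{K}$ and $\meas{F}^{-1}$ from \eqref{eq:affIneq}, and conclude with the assumption $\meas{K}\,\meas{F}^{-1}\diam{\Omega}^{-1} \le c$. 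If you prepend that inverse inequality to your argument, your scalings do close: only the $\diam{\Omega}^{-2}h_K^3$ contribution survives, and it is dominated by $\diam{\Omega}^{-1}h_K^2$ precisely because $h_K \lesssim \diam{\Omega}$. Without that ingredient (or without reinterpreting the scaling parameter in \eqref{eq:normH_0} as a mesh-size quantity rather than the domain diameter), your final step is a genuine gap.
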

\begin{proof}
  Let us begin the proof using \eqref{eq:normH_1} from \ref{sec:appendixB}.
  Inequality \eqref{eq:normH_1} holds for any $\tensorOne{u} \in \boldsymbol{\mathcal{U}}^{h} $, and consequently, it also holds for the bubble functions:
  \begin{alignat}{2}
     & \normH{b^{\pm}_{K,\varphi}}^2
    \le c \diam{\Omega}^{-2} \normL{b_{K,\varphi}}^2.
    \label{eq:normH_1_bubble}
  \end{alignat}
  Furthermore, for any parent element type, since the function $\hat{b}_{K,\varphi}$ has compact support in $K$, we can relate the $L_2$-norm over the element to the $L_2$-norm over the face as follows:
  \begin{align}
     & \normL{\hat{b}^{\pm}_{\hat{K},\hat{\varphi}}}
    = c \normLg{\hat{b}_{\hat{\varphi}}}.
    \label{eq:bound_bubble_L_Lg}
  \end{align}
  Using this result in conjunction with the inequalities \eqref{eq:affIneq} recalled in \ref{sec:appendixB}, which are obtained through affine mapping, we have:
  \begin{linenomath}
    \begin{subequations}
      \begin{alignat}{2}
        \normH{b^{\pm}_{K,\varphi}}^2 & \le c \diam{\Omega}^{-2} \normL{b^{\pm}_{K,\varphi}}^2                            \\
                                      & \le c \diam{\Omega}^{-2} \meas{K} \normL{\hat{b}^{\pm}_{\hat{K},\hat{\varphi}}}^2 \\
                                      & \le c \diam{\Omega}^{-2} \meas{K} \normL{\hat{b}_{\hat{\varphi}}}^2               \\
                                      & \le c \; {\meas{K}}{\meas{F}^{-1} \diam{\Omega}^{-2}} \normLg{b_{\varphi}}^2.
      \end{alignat}\label{eq:bound_bubble_H1_Lg_1}\null
    \end{subequations}
  \end{linenomath}
  Utilizing the assumptions of shape regularity and quasi-uniformity, specifically that
  \begin{alignat}{2}
     & {\meas{K}}{\meas{F}^{-1} \diam{\Omega}^{-1}}<c,
  \end{alignat}
  the relation can be rewritten as follows:
  \begin{alignat}{2}
     & \normH{b^{\pm}_{K,\varphi}}^2
    \le {C}{\diam{\Omega}^{-1}} \normLg{b_{\varphi}}^2.
    \label{eq:bound_bubble_H1_Lg_2_1}
  \end{alignat}
\end{proof}
The proof of inf-sup stability can be conducted by following the general framework established in \cite{BofBreFor13}.
In particular, the following theorem must be demonstrated:
\begin{theorem}
  If there exists a linear operator $\Pi_h: \boldsymbol{\mathcal{U}} \rightarrow \boldsymbol{\mathcal{U}}^{h}$  such that:
  \begin{linenomath}
    \begin{subequations}
      \begin{alignat}{2}
         & b (\tensorOne{\mu}, \Pi_h \tensorOne{u} - \tensorOne{u})
        = 0,
         &                                                          &
        \quad
        \forall \tensorOne{\mu} \in \boldsymbol{\mathcal{M}}^{h},
        \label{eq:thm_p1}                                             \\
         & \|\Pi_h \tensorOne{u} \|_U
        \leq C \| \tensorOne{u} \|_U,
         &                                                          &
        \quad
        \forall \tensorOne{u} \in \boldsymbol{\mathcal{U}}^{h},
        \label{eq:thm_p2}
      \end{alignat}\label{eq:theorem}
    \end{subequations}
  \end{linenomath}
  then the inf-sup condition \eqref{eq:infsup} holds true with $\beta = C^{-1}$.
\end{theorem}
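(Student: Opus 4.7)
The plan is to invoke the classical Fortin trick: the two hypotheses on $\Pi_h$ are precisely what is needed to transfer the continuous inf-sup condition for the pair $(\boldsymbol{\mathcal{U}}, \boldsymbol{\mathcal{M}})$, which I will assume to hold with some constant $\beta_0 > 0$, down to the discrete pair $(\boldsymbol{\mathcal{U}}^{h}, \boldsymbol{\mathcal{M}}^{h})$. The strategy is: take an arbitrary $\boldsymbol{\mu}^h \in \boldsymbol{\mathcal{M}}^h$, realize a near-optimal $\boldsymbol{u} \in \boldsymbol{\mathcal{U}}$ via the continuous inf-sup, then use $\Pi_h \boldsymbol{u}$ as an admissible discrete test function.

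Concretely, first I would fix any $\boldsymbol{\mu}^h \in \boldsymbol{\mathcal{M}}^h \subset \boldsymbol{\mathcal{M}}$ and, by the continuous inf-sup, select $\boldsymbol{u} \in \boldsymbol{\mathcal{U}}$ with $\boldsymbol{u}\neq \boldsymbol{0}$ satisfying
\begin{equation*}
b(\boldsymbol{\mu}^h, \boldsymbol{u}) \;\geq\; \beta_0 \, \|\boldsymbol{\mu}^h\|_{\mathcal{M}} \, \|\boldsymbol{u}\|_{U}.
\end{equation*}
Next I would set $\boldsymbol{u}^h := \Pi_h \boldsymbol{u} \in \boldsymbol{\mathcal{U}}^{h}$. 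Property \eqref{eq:thm_p1} gives $b(\boldsymbol{\mu}^h, \boldsymbol{u}^h - \boldsymbol{u}) = 0$, hence $b(\boldsymbol{\mu}^h, \boldsymbol{u}^h) = b(\boldsymbol{\mu}^h, \boldsymbol{u})$, while property \eqref{eq:thm_p2} supplies the bound $\|\boldsymbol{u}^h\|_{U} \leq C \, \|\boldsymbol{u}\|_{U}$. Combining these,
\begin{equation*}
\sup_{\boldsymbol{v}^h \in \boldsymbol{\mathcal{U}}^{h}} \frac{b(\boldsymbol{\mu}^h, \boldsymbol{v}^h)}{\|\boldsymbol{v}^h\|_{U}} \;\geq\; \frac{b(\boldsymbol{\mu}^h, \boldsymbol{u}^h)}{\|\boldsymbol{u}^h\|_{U}} \;=\; \frac{b(\boldsymbol{\mu}^h, \boldsymbol{u})}{\|\boldsymbol{u}^h\|_{U}} \;\geq\; \frac{\beta_0 \, \|\boldsymbol{\mu}^h\|_{\mathcal{M}} \, \|\boldsymbol{u}\|_{U}}{C \, \|\boldsymbol{u}\|_{U}} \;=\; \frac{\beta_0}{C} \, \|\boldsymbol{\mu}^h\|_{\mathcal{M}}.
\end{equation*}
Taking the infimum over $\boldsymbol{\mu}^h \in \boldsymbol{\mathcal{M}}^h$ yields \eqref{eq:infsup} with constant (up to the absorbed $\beta_0$) equal to $C^{-1}$, as claimed.

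The theorem itself is structurally easy, since it is the abstract half of Fortin's lemma and requires only linearity of $b$ in its first argument together with the two hypotheses. The real difficulty is hidden elsewhere: it lies in showing that the candidate operator $\Pi_h$ introduced in \eqref{eq:inter_h}, built from the Clement interpolant plus face-bubble corrections weighted by \eqref{eq:defBubbleDof}, actually satisfies both \eqref{eq:thm_p1} and \eqref{eq:thm_p2}. Verifying \eqref{eq:thm_p1} will hinge on the fact that $\boldsymbol{\mu}^h$ is piecewise constant on faces of $\mathcal{F}_f$, so that $(\boldsymbol{\mu}^h, \llbracket \Pi_h \boldsymbol{u} - \boldsymbol{u}\rrbracket)_{\Gamma_f}$ reduces on each $\varphi$ to testing the face-average of the jump, which is precisely what the bubble correction in \eqref{eq:defBubbleDof} is designed to annihilate. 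Verifying \eqref{eq:thm_p2} will require combining standard stability of $\Pi_1$ with the bubble estimate \eqref{eq:bound_bubble_H1_Lg_2} and a trace inequality to control the face contributions by $\|\boldsymbol{u}\|_U$, with shape-regularity providing the mesh-independent constant.
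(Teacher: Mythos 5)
Your argument is correct, but it proves a different half of the story than the paper's own proof does. You establish the abstract Fortin implication: assuming the continuous inf-sup condition for $b$ on $\boldsymbol{\mathcal{U}}\times\boldsymbol{\mathcal{M}}$ with constant $\beta_0$, the two properties \eqref{eq:thm_p1}--\eqref{eq:thm_p2} transfer it to the discrete pair, giving $\beta=\beta_0/C$ (the paper's $\beta=C^{-1}$ silently absorbs $\beta_0$; your explicit treatment of this, and of the continuous inf-sup hypothesis that the statement leaves implicit, as well as your reading of \eqref{eq:thm_p2} as a bound for $\boldsymbol{u}$ in the continuous space, is the more careful reading). The paper's proof, by contrast, takes that abstract implication essentially for granted (citing Fortin and Boffi--Brezzi--Fortin) and spends all its effort verifying that the concrete bubble-corrected interpolant \eqref{eq:inter_h} is a valid Fortin operator: condition \eqref{eq:thm_p1} is reduced, since $\boldsymbol{\mu}^h$ is facewise constant, to face-averaged jump identities that the bubble degrees of freedom \eqref{eq:defBubbleDof} are built to satisfy, and condition \eqref{eq:thm_p2} is obtained by bounding $\Pi_2(I-\Pi_1)\boldsymbol{u}$ through the bubble-norm lemma \eqref{eq:bound_bubble_H1_Lg_2}, the affine-mapping estimates and the trace inequality. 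So your proof covers the literal statement cleanly and self-containedly, while the paper's proof supplies the construction that makes the statement applicable to their discretization; your closing sketch of how \eqref{eq:thm_p1} and \eqref{eq:thm_p2} would be verified matches the paper's actual argument in outline, but you do not carry out those estimates, which is where the paper's real work (and the mesh-independence of $C$) lies.
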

\begin{proof}
  The inf-sup conditions can be proven using the so-called Fortin trick \cite{For77}.
  This approach involves rewriting the canonical interpolant $\Pi_h: C(\bar{\Omega}) \rightarrow \boldsymbol{\mathcal{U}}^{h}$ as follows:
  \begin{linenomath}
    \begin{subequations}
      \begin{alignat}{2}
         & \Pi_h \tensorOne{u}
        = \Pi_1 \tensorOne{u}
        + \Pi_2 (I - \Pi_1) \tensorOne{u},
      \end{alignat}\label{eq:FortinTrick}\null
    \end{subequations}
  \end{linenomath}
  that, in our case, using \cref{eq:inter_h}:
  \begin{alignat}{2}
     & \Pi_2 (I - \Pi_1) \tensorOne{u}
    = \sum_{\varphi \in \mathcal{F}_f } \left ( b^{-}_{K,\varphi} \tensorOne{u}_{\varphi}^{-}  +  b^{+}_{K, \varphi} \tensorOne{u}^{+}_{\varphi} \right),
    \label{eq:FortinTrick_P2}
  \end{alignat}
  Let us begin by proving the first part of the theorem, i.e., \cref{eq:thm_p1}.
  Since $\tensorOne{\mu} \in \boldsymbol{\mathcal{M}}^{h}$, we can assert that \cref{eq:thm_p1} is equivalent to:
  \begin{alignat}{2}
     & \int_{\varphi} \llbracket (\Pi_h \tensorOne{u} - \tensorOne{u} \rrbracket \operatorname{d \Gamma}
    = 0,
     &                                                                                                   &
    \quad
    \forall \varphi \in \mathcal{F}_f, \label{eq:bilinearform_b}
  \end{alignat}
  Substituting the definition of the interpolant given in \cref{eq:inter_h} and exploiting the linearity of the trace operator, we obtain:
  \begin{alignat}{2}
     & \int_{\varphi} b_{\varphi} \tensorOne{u}^{+}_{\varphi} - b_{\varphi}\tensorOne{u}^{-}_{\varphi}
    = \int_{\varphi} \llbracket (I - \Pi_1) \tensorOne{u} \rrbracket  d \Gamma
    = \int_{\varphi} \gamma^{+}(\tensorOne{u} - \Pi_1 \tensorOne{u}) d \Gamma -  \int_{\varphi} \gamma^{-}(\tensorOne{u} - \Pi_1 \tensorOne{u}) d \Gamma,
     &                                                                                                 &
    \quad
    \forall \varphi \in \mathcal{F}_f
    \label{eq:bilinearform_b_1}
  \end{alignat}
  This equation is satisfied if the following two conditions hold:
  \begin{linenomath}
    \begin{subequations}
      \begin{alignat}{2}
         & \tensorOne{u}^{+}_{\varphi}  \int_{\varphi} b_{\varphi}d \Gamma
        = \int_{\varphi} \gamma^{+}(\tensorOne{u} - \Pi_1 \tensorOne{u}) d \Gamma,
         &                                                                 &
        \quad
        \forall \varphi \in \mathcal{F}_f,
        \\
         & \tensorOne{u}^{-}_{\varphi} \int_{\varphi} b_{\varphi} d \Gamma
        = \int_{\varphi} \gamma^{-}(\tensorOne{u} - \Pi_1 \tensorOne{u}) d \Gamma,
         &                                                                 &
        \quad
        \forall \varphi \in \mathcal{F}_f.
      \end{alignat}\label{eq:bilinearform_b_2}\null
    \end{subequations}
  \end{linenomath}
  Using the definitions of $\tensorOne{u}^{+}_{\varphi}$ and $\tensorOne{u}^{-}_{\varphi}$, it is straightforward to verify that both of these conditions hold.
  To complete the proof, we need to demonstrate the second relation of the theorem.
  By again utilizing the Fortin trick, we can establish \cref{eq:thm_p2} by simply proving that:
  \begin{alignat}{2}
     & \normH{\Pi_2 (I - \Pi_1) \tensorOne{u}}
    \leq C \normH{\tensorOne{u}},
     &                                         &
    \quad
    \forall \tensorOne{u} \in \boldsymbol{\mathcal{U}}^{h}.
    \label{eq:boundPi2}
  \end{alignat}
  Let us begin the proof of this inequality.
  By using the definition of $\normH{\Pi_2 (I - \Pi_1) \tensorOne{u}}$ and applying the Cauchy–Schwarz inequality, we obtain:
  \begin{align}
    \normH{\Pi_2 (I - \Pi_1 ) \tensorOne{u}}^2
     & = \sum^{n_f}_{\varphi=1} \normH{b^{+}_{K,\varphi} \tensorOne{u}^{+}_{\varphi}}^2 +  \normH{b^{-}_{K,\varphi} \tensorOne{u}^{-}_{\varphi} }^2  \nonumber \\
     & \le \sum^{n_f}_{\varphi=1} \normH{b^{+}_{K,\varphi}}^2 |\tensorOne{u}_{\varphi}^{+}|_2^2
    + \normH{b^{-}_{K, \varphi}}^2 |\tensorOne{u}_{\varphi}^{-}|_2^2.
    \label{eq:boundPi2_1}
  \end{align}
  By substituting the definitions of $\tensorOne{u}_{\varphi}^{-}$ and $\tensorOne{u}_{\varphi}^{+}$ using \cref{eq:defBubbleDof}, we can proceed further:
  \begin{align}
    \normH{\Pi_2 (I - \Pi_1 ) \tensorOne{u}}^2 & \le c \sum^{n_f}_{\varphi=1} \frac{1}{\int_{\varphi} b_{\varphi} \operatorname{d \Gamma}} \left (  \normH{b^{+}_{K, \varphi}}^2 \left ( \int_{\varphi} \gamma^{+}(\tensorOne{u}-\Pi_1 \tensorOne{u} ) \operatorname{d \Gamma}\right )^2 + \normH{b^{-}_{K, \varphi}}^2 \left ( \int_{\varphi} \gamma^{-}(\tensorOne{u}-\Pi_1 \tensorOne{u}) \operatorname{d \Gamma} \right )^2 \right ).
    \label{eq:boundPi2_2}
  \end{align}
  Since the mesh is affine, the integral of the bubble function on the face $\varphi$ can be substituted by $\int_{\varphi} b_{\varphi}\operatorname{d\Gamma}= c \meas{\varphi}$.
  By exploiting Lemma, we have:
  \begin{align}
    \normH{\Pi_2 (I - \Pi_1 ) \tensorOne{u}}^2 & \le c \sum^{n_f}_{\varphi=1} \diam{\Omega}^{-1} \meas{\varphi}^{-1} \normLg{b_{\varphi}}^2 \left ( \normLg{\gamma^{+}(\tensorOne{u}-\Pi_1 \tensorOne{u})}^2 + \normLg{\gamma^{-}(\tensorOne{u}-\Pi_1 \tensorOne{u})}^2 \right ).
    \label{eq:boundPi2_3}
  \end{align}
  By utilizing the inequality \eqref{eq:gamAffIneq_v_hv} and the triangle inequality, we obtain the following expression:
  \begin{align}
    \normH{\Pi_2 (I - \Pi_1 ) \tensorOne{u}}^2
     & \le c \sum^{n_f}_{\varphi=1} \diam{\Omega}^{-1} \normLg{\gamma^{+}(\tensorOne{u}-\Pi_1 \tensorOne{u})}^2 + \normLg{\gamma^{-}(\tensorOne{u}-\Pi_1 \tensorOne{u})}^2 \nonumber
    \\
     & \le c \sum^{n_f}_{\varphi=1} \diam{\Omega}^{-1} \normLg{\gamma^{+}(\tensorOne{u})}^2 +\normLg{\gamma^{-}(\tensorOne{u})}^2
    \label{eq:boundPi2_4}
  \end{align}
  Finally, using continuity arguments, it is possible to prove that the $L_2$-norm of the trace is always less than or equal to the $H_1$-norm.
  Thus, we can express \cref{eq:boundPi2_4} as:
  \begin{align}
    \normH{\Pi_2 (I - \Pi_1 ) \tensorOne{u}}^2 & \le c \sum^{n_f}_{\varphi=1} \left (\diam{\Omega}^{-1} \normLg{\gamma^{+}(\tensorOne{u})}^2 + \diam{\Omega} \normLg{\gamma^{+} (\nabla \tensorOne{u})}^2 +  \right. \nonumber \\
                                               & \left . + \diam{\Omega}^{-1} \normLg{\gamma^{-}(\tensorOne{u})}^2 + \diam{\Omega} \normLg{\gamma^{-} (\nabla \tensorOne{u})}^2 \right ) \le C \normH{\tensorOne{u}}^2 .
    \label{eq:boundPi2_5}
  \end{align}
\end{proof}
The proof of the inf-sup condition has been established for bubble functions applied on both sides of the interface element.
It is also possible to stabilize the problem using one-sided bubble functions, as demonstrated in \cite{HauLeT07,DroEncFaiHaiMas24}.
However, numerical results presented in \cite{DroEncFaiHaiMas24} indicate that employing two-sided bubbles leads to a more stable formulation.
This increased stability, however, comes at the cost of introducing additional degrees of freedom into the system.
While the use of one-sided bubbles may simplify the implementation and reduce computational overhead, the benefit of enhanced stability from two-sided bubbles makes them a preferable choice in many scenarios.

\subsection{Algebraic Formulation of the Discrete System}
\label{sec:practical_impl}
By using the enriched discrete space $\boldsymbol{\mathcal{U}}^{h}$, we introduce additional degrees of freedom into the discrete algebraic system. In particular, \cref{eq:NR1,eq:NR2} become:
\begin{linenomath}
  \begin{subequations}
    \begin{alignat}{2}
       & \text{solve}
       &              &
      \quad
      \begin{bmatrix}
        {\Mat{A}}_{bb} & {\Mat{A}}_{bu} \\
        {\Mat{A}}_{ub} & {\Mat{A}}_{uu}
      \end{bmatrix}
      \begin{bmatrix}
        \delta \vec{u}_{b} \\
        \delta \vec{u}
      \end{bmatrix}
      = -
      \begin{bmatrix}
        \vec{r}_b \\
        \vec{r}_u
      \end{bmatrix},
      \label{eq:NR1bubbles}
      \\
       & \text{set}
       &              &
      \quad
      \begin{bmatrix}
        \vec{u}_{b} \\
        \vec{u}
      \end{bmatrix}_{\ell+1}
      =
      \begin{bmatrix}
        \vec{u}_{b} \\
        \vec{u}
      \end{bmatrix}_{\ell}
      +
      \begin{bmatrix}
        \delta \vec{u}_{b} \\
        \delta \vec{u}
      \end{bmatrix}
    \end{alignat}\label{eq:bubbleSystem}\null
  \end{subequations}
\end{linenomath}
%
Expressions for vector $\vec{r}_b$ and matrices ${\Mat{A}}_{bb}$, ${\Mat{A}}_{bu}$ and ${\Mat{A}}_{ub}$ are given in \ref{sec:appendixA}.
It is important to note that ${\Mat{A}}_{bb}$ has a block diagonal matrix representation.
Hence, we can eliminate the degrees of freedom corresponding to the bubble functions, resulting in a system that retains the same degrees of freedom as the original one, namely
\begin{align}
  \mathcal{A} & ={\Mat{A}}_{uu} - {\Mat{A}}_{ub} {\Mat{A}}^{-1}_{bb} {\Mat{A}}_{bu},
  \label{eq:hatA_staticCond}
\end{align}
The resulting scheme is equivalent to a stabilized low-order displacement constant piecewise traction discretization, in which stabilization terms appear in every sub-block.
Optimal order error estimates for this stabilized scheme can be derived from the estimates provided in \cite{Woh12} for the pair of spaces $(\boldsymbol{\mathcal{U}}^h, \boldsymbol{\mathcal{M}}^h)$.
The error estimate is obtained by utilizing these findings along with the standard error estimates for linear finite elements.

\begin{remark}
  It is also important to note that employing bubble functions may present a quadrature challenge for certain types of elements.
  It is necessary to integrate the terms of the formulation that involve bubble functions with sufficient accuracy to achieve optimal results.
  For tetrahedra and triangles, we must utilize multi-point quadrature rules to ensure exact integration of the contributions, employing a 14-point rule for tetrahedra and a 4-point rule for triangles, respectively.
\end{remark}

\section{Numerical Results}
In this section, we present numerical results to validate and assess the performance of the proposed formulation. The content is organized as follows:

\begin{enumerate}
  \item We first consider two theoretical benchmarks, where numerical solutions are compared against analytical ones. These test cases are used to validate the accuracy of the formulation.
  \item Next, we examine a series of problems designed to evaluate both linear and nonlinear convergence behavior, with particular attention to the influence of the penalty parameter.
  \item Finally, we present a field-scale simulation involving multiple faults to demonstrate the robustness and reliability of the method in scenarios representative of real-world subsurface applications.
\end{enumerate}

\subsection{Assessment of Numerical Accuracy and Convergence}
In this subsection, we use two benchmark problems to validate the implementation and verify the theoretical convergence behavior of the numerical error.
\subsubsection{Inclined fault subject to constant compression}
The first example involves a single crack within a two-dimensional infinite domain subjected to constant uniaxial compression (Figure \ref{fig:singleFault}).
\begin{figure}
  \centering
  \hfill
  \begin{subfigure}[b]{.45\linewidth}
    \centering
    \includegraphics[height=12em]{./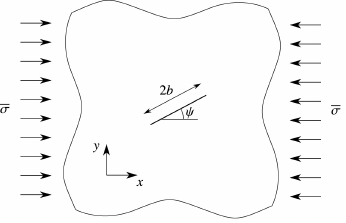}
    \caption{}
  \end{subfigure}
  \hfill
  \begin{subfigure}[b]{.45\linewidth}
    \small
    \centering
    \begingroup
    \renewcommand{\arraystretch}{1.4} 
    \begin{tabular}{lll}
      \toprule
      Quantity                                 & Value           & Unit \\
      \midrule
      Young's modulus ($E$)                    & $1 \times 10^5$ & [Pa] \\
      Poisson's ratio ($\nu$)                  & $0.4$           & [-]  \\
      Fracture inclination ($\Psi$)            & $20.0$          & [-]  \\
      Fracture length ($2b$)                   & $2.0$           & [m]  \\
      Friction angle ($\theta$)                & $30.0$          & [-]  \\
      Compressive stress ($\overline{\sigma}$) & $1.0$           & [Pa] \\
      \bottomrule
    \end{tabular}
    \endgroup
    \caption{}
  \end{subfigure}
  \hfill\null
  \caption{Inclined fault subject to constant compression: (a) domain sketch and (b) physical parameters.}\label{fig:singleFault}
\end{figure}
The benchmark is thoroughly detailed by \citep{PhaNapGraKap03}, and it serves as the basis for implementation validation also in \citep{FraCasWhiTch20, BeaChoLaaMas23}.
In this context, let $E$ and $\nu$ represent the linear elastic parameters, $\overline{\sigma}$ denote the magnitude of the compressive stress, $\alpha$ be the inclination angle of the fracture, $2b$ its length, and $\theta$ the friction angle according to Coulomb's criterion (with zero cohesion). The analytical solution yields the normal traction and the sliding on the fracture, as follows:
\begin{align}
  t_N                     & = -\overline{\sigma} \sin^2(\alpha),                                                                                                            \\
  \| \tensorOne{g}_T \|_2 & = \frac{4(1-\nu^2)}{E} \Bigl( \overline{\sigma} \sin(\alpha) \bigl(\cos(\alpha) - \sin(\alpha) \tan(\theta) \bigr) \Bigr) \sqrt{b^2-(b-\xi)^2},
\end{align}
respectively, where $\xi$ is the curvilinear abscissas along the fracture such that $0 \leq \xi \leq 2b$.
A comparison between the numerical and analytical solutions is presented for both the normal traction (Figure \ref{fig:singleFault_tn_b}) and slip displacement (Figure \ref{fig:singleFault_slip_b}). The computed displacement aligns closely with the expected values across the entire domain; however, significant discrepancies in the traction are observed near the fracture tip, where oscillations emerge. Even after refining the mesh, these oscillations persist but remain confined closer to the fracture tip, attributed to the singular behavior of the stress solution in that region.
Figures \ref{fig:singleFault_tn_a} and \ref{fig:singleFault_slip_a} illustrate the convergence properties, i.e., the error dependence on mesh size of the numerical formulation. Due to the oscillations in the traction approximation near the fracture tip, for this analysis, we focus as in \citep{FraCasWhiTch20, BeaChoLaaMas23} only on the central 90\% of the fracture length when calculating the $L_2$-norm of the normal traction error. The error convergence profile is slightly super-linear both for the traction and for the tangential slip, confirming the accuracy and validating the formulation used for tetrahedral, hexahedral and wedge elements.
\begin{figure}
  \small
  \begin{subfigure}[b]{.475\linewidth}
    \centering
    \begin{tikzpicture}
      \begin{axis}[
          width=\linewidth,
          height=0.75\linewidth,
          grid=both,
          minor x tick num = 4,
          minor y tick num = 4,
          major grid style={line width=0.4pt,draw=gray!50},
          minor grid style={line width=0.2pt,draw=gray!15},
          xmin=-0.1,xmax=2.1,ymin=0,ymax=15,
          xlabel={{ $y$ [m]} }, ylabel={ { $|t_N|$ [MPa] }},
          ylabel near ticks,xlabel near ticks,
          scaled y ticks=false,
          legend style={at={(0.5,0.05)},anchor=south},
          set layers]
        \addplot [mark=*, mark size=0.8pt, solid, black, on layer=axis background] table [x=xsi,y=tn] {./20comp_ytn.txt};
        \addplot [mark=none, solid, line width=1pt, red,  on layer=axis foreground] table [x=xsi,y=tn]  {./20comp_ytnAnalyticalSolution.txt};
        \legend{{Numerical Solution},{Analitical Solution}}
      \end{axis}
    \end{tikzpicture}
    \caption{}
    \label{fig:singleFault_tn_b}
  \end{subfigure}
  \hfill
  \begin{subfigure}[b]{.475\linewidth}
    \centering
    \begin{tikzpicture}
      \begin{axis}[
          width=\linewidth,
          height=0.75\linewidth,
          grid=both,
          minor x tick num = 4,
          minor y tick num = 1,
          major grid style={line width=0.4pt,draw=gray!50},
          minor grid style={line width=0.2pt,draw=gray!15},
          xmin=-0.1,xmax=2.1,ymin=0,ymax=0.007,
          xlabel={{ $y$ [m]} }, ylabel={ { $\|\tensorOne{g}_T\|_2$ [m] }},
          ylabel near ticks,xlabel near ticks,
          scaled y ticks=false,
          legend style={at={(0.5,0.05)},anchor=south},
          set layers]
        \addplot [mark=*, mark size=0.8pt, solid, black, on layer=axis background] table [x=xsi,y=slip] {./20comp_ytn.txt};
        \addplot [mark=none, solid, line width=1pt, red,  on layer=axis foreground] table [x=xsi,y=slip]  {./20comp_xsiSlipAnalyticalSolution.txt};
        \legend{{Numerical Solution},{Analitical Solution}}
      \end{axis}
    \end{tikzpicture}
    \caption{}
    \label{fig:singleFault_slip_b}
  \end{subfigure}
  \caption{Inclined fault subject to constant compression: Comparison between numerical and analytical (a) normal traction $|t_N|$ and (b) tangential slip $\| \tensorOne{g}_T \|_2$ along the fracture.}\label{fig:singleFault_tn}
\end{figure}
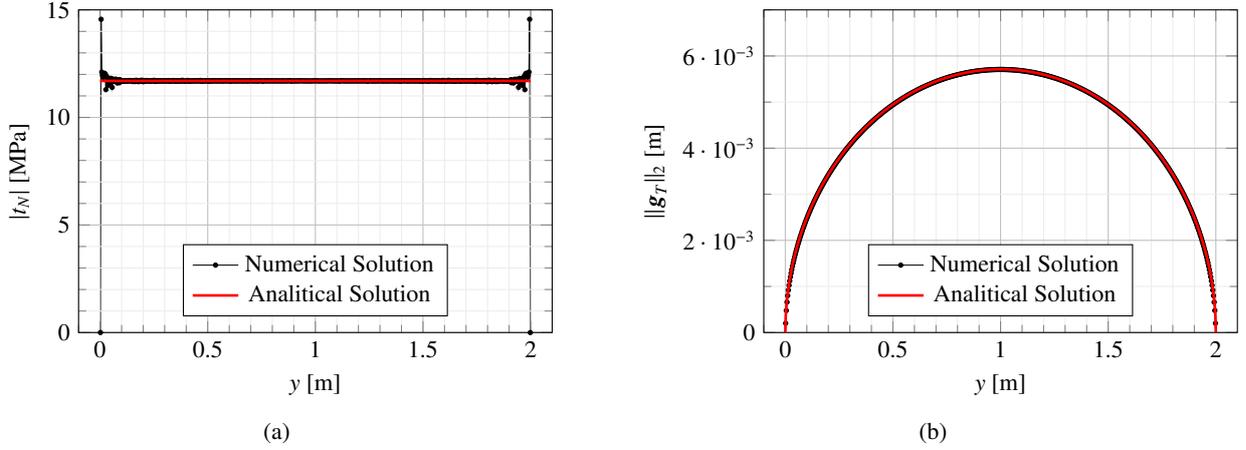
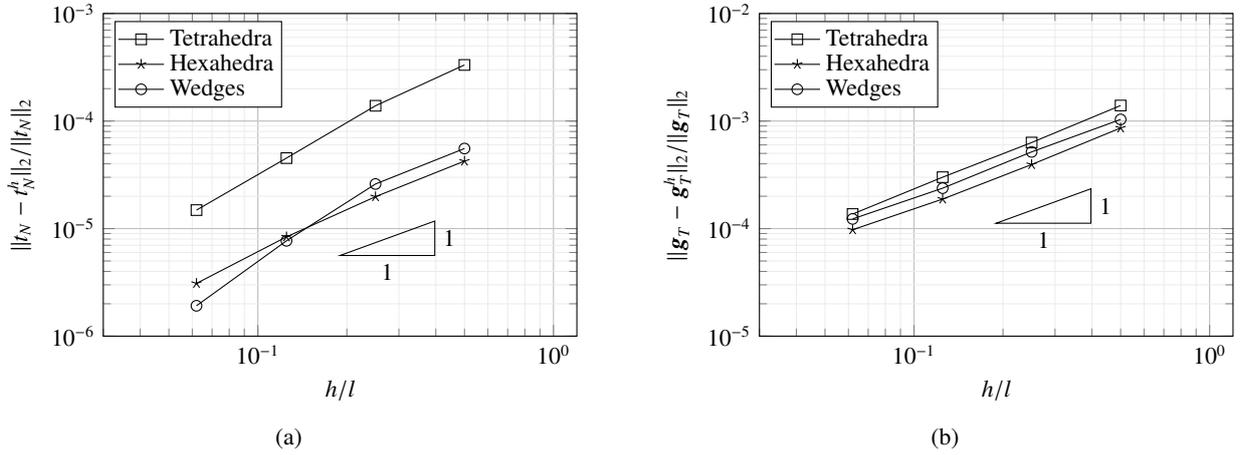
\begin{figure}
  \centering
  \small
  \begin{subfigure}[b]{.475\linewidth}
    \centering
    \begin{tikzpicture}
      \begin{loglogaxis}[ 
          width=\linewidth,
          height=0.75\linewidth,
          grid=both,
          major grid style={line width=0.4pt,draw=gray!50},
          minor grid style={line width=0.2pt,draw=gray!15},
          xmin=3.e-2,xmax=1.2,
          ymin=1.e-6,ymax=1.e-3,
          xlabel={ $h/l$ }, ylabel={  $\|t_N-t_N^h\|_2/\|t_N\|_2$  },
          ylabel near ticks,xlabel near ticks,
          legend style={
              at={(0.025,0.975)},
              anchor=north west,
              cells={anchor=west},
              inner sep=1pt,
              outer sep=0pt,
              row sep=-2pt
            }
        ]
        \addplot [black,mark=square] table [x=h,y=err_T] {./error_20comp50.txt};
        \addplot [black,mark=star] table [x=h,y=err_H] {./error_20comp50.txt};
        \addplot [black,mark=o] table [x=h,y=err_W] {./error_20comp50.txt};
        \logLogSlopeTriangle{0.7}{0.2}{0.25}{1}{black};
        \legend{{Tetrahedra},{Hexahedra},{Wedges}}
      \end{loglogaxis}
    \end{tikzpicture}
    \caption{}
    \label{fig:singleFault_tn_a}
  \end{subfigure}
  \hfill
  \begin{subfigure}[b]{.475\linewidth}
    \centering
    \begin{tikzpicture}
      \begin{loglogaxis}[
          width=\linewidth,
          height=0.75\linewidth,
          grid=both,
          major grid style={line width=0.4pt,draw=gray!50},
          minor grid style={line width=0.2pt,draw=gray!15},
          xmin=3.e-2,xmax=1.2,
          ymin=1.e-5,ymax=1.e-2,
          xlabel={{$h/l$} }, ylabel={ {$\|\tensorOne{g}_T-\tensorOne{g}_T^h\|_2/\|\tensorOne{g}_T\|_2$}  },
          ylabel near ticks,xlabel near ticks,
          legend style={
              at={(0.025,0.975)},
              anchor=north west,
              cells={anchor=west},
              inner sep=1pt,
              outer sep=0pt,
              row sep=-2pt
            }
        ]
        \addplot [black,mark=square] table [x=h,y=err_T_j] {./error_20comp50.txt};
        \addplot [black,mark=star] table [x=h,y=err_H_j] {./error_20comp50.txt};
        \addplot [black,mark=o] table [x=h,y=err_W_j] {./error_20comp50.txt};
        \logLogSlopeTriangle{0.7}{0.2}{0.35}{1}{black};
        \legend{{Tetrahedra},{Hexahedra},{Wedges}}
      \end{loglogaxis}
    \end{tikzpicture}
    \caption{}
    \label{fig:singleFault_slip_a}
  \end{subfigure}
  \caption{Inclined fault subject to constant compression: Convergence of the relative $L_2$-error in (a) normal traction and  (b) tangential slip.}\label{fig:singleFault_slip}
\end{figure}
\subsubsection{Dislocated reservoir with a vertical fault}
The second test case is described in \cite{NovShoVosHajJan2023}.
The domain approximates an infinite horizontal dislocated reservoir in $x$ and $z$ directions with a vertical fault at the center of the reservoir. Figure \ref{fig:verticalFault} shows a sketch of the domain and provides the value used for the geometrical setup and the physical parameters.
\begin{figure}
  \centering
  \hfill
  \begin{subfigure}[b]{.45\linewidth}
    \centering
    \includegraphics[height=17em]{./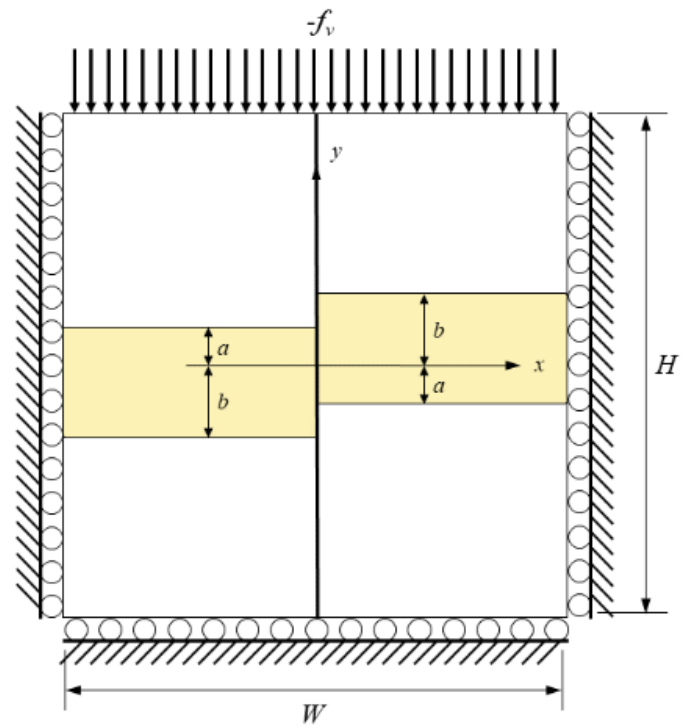}
    \caption{}\label{fig:verticalFault_a}
  \end{subfigure}
  \hfill
  \begin{subfigure}[b]{.45\linewidth}
    \small
    \centering
    \begingroup
    \renewcommand{\arraystretch}{1.4} 
    \begin{tabular}{lll}
      \toprule
      Quantity                             & Value  & Unit  \\
      \midrule
      Reservoir geometry ($a$)             & $75$   & [m]   \\
      Reservoir geometry ($b$)             & $150$  & [m]   \\
      Height of simulation domain ($H$)    & $4500$ & [m]   \\
      Width of simulation domain  ($W$)    & $4500$ & [m]   \\
      Shear modulus ($G$)                  & $6500$ & [MPa] \\
      Poisson's ratio ($\nu$)              & $0.15$ & [-]   \\
      Friction angle ($\theta$)            & $30.0$ & [-]   \\
      Incremental reservoir pressure ($p$) & $-25$  & [MPa] \\
      Biot's coefficient ($\alpha$)        & $0.9$  & [-]   \\
      \bottomrule
    \end{tabular}
    \endgroup
    \caption{}\label{fig:verticalFault_b}
  \end{subfigure}
  \hfill\null
  \caption{Dislocated reservoir with a vertical fault: (a) domain sketch and (b) physical parameters.}\label{fig:verticalFault}
\end{figure}
We have considered here two scenarios: the pre-slip case and the slip case.
In the pre-slip case, we focus on the tangential stress, whose analytical expression is given by the following relation:
\begin{align}
  \| \tensorOne{t}_T \|_2 & = \frac{C}{2} \ln \frac{(y - a)^2 (y + a)^2}{(y - b)^2(y + b)^2},
\end{align}
where $C$ is given by:
\begin{align}
  C & = \frac{(1 - 2 \nu ) \alpha p}{2 \pi (1 - \nu)}
\end{align}
In the second scenario, the slip begin and it is given by the following relation:
\begin{align}
  \| \tensorOne{g}_T \|_2 & = \frac{C}{A}
  \begin{cases}
    0 \quad        & \mbox{if} \quad  y \le -b,     \\
    -(y + b) \quad & \mbox{if} \quad -b < y \le -a, \\
    (a - b) \quad  & \mbox{if} \quad -a < y < a,    \\
    (y - b) \quad  & \mbox{if} \quad  a \le y < b,  \\
    0 \quad        & \mbox{if} \quad  b \le y
  \end{cases}
\end{align}
where:
\begin{align}
  A = G 2 \pi ( 1 - \nu).
\end{align}
Comparison between the analytical and numerical solution for the tangential traction is showed in Figure \ref{fig:verticalFault_tt_b}, while the comparison
between the analytical and numerical solution for
the slip along the fault is shown in Figure \ref{fig:verticalFault_slip_b}.
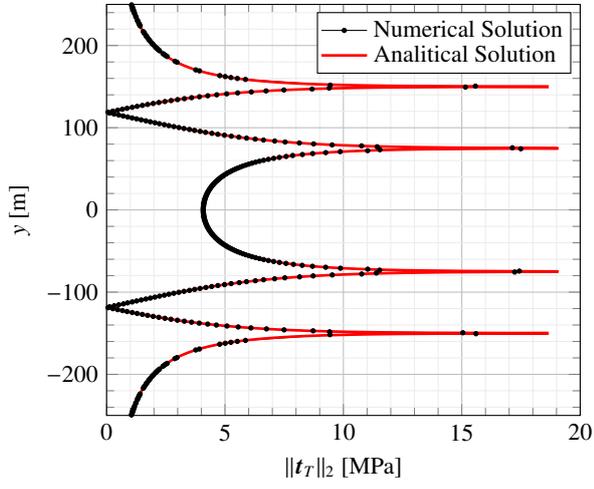
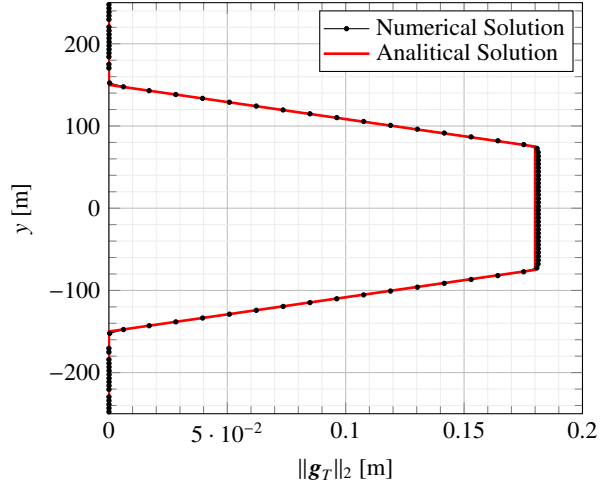
\begin{figure}
  \centering
  \small
  \begin{subfigure}[b]{.475\linewidth}
    \centering
    \begin{tikzpicture}
      \pgfplotsset{scaled y ticks=false}
      \begin{axis}[ 
          width=1.0\linewidth,height=0.9\linewidth,
          grid=both,
          minor x tick num = 4,
          minor y tick num = 4,
          major grid style={line width=0.4pt,draw=gray!50},
          minor grid style={line width=0.2pt,draw=gray!15},
          xmin=0,xmax=20,ymin=-250,ymax=250,
          ylabel={ $y$ [m] }, xlabel={ { $\| \tensorOne{t}_T\|_2$ [MPa] }},
          xtick distance=5,
          ytick distance=100,
          ylabel near ticks,xlabel near ticks,
          legend style={
              anchor=north east,
              cells={anchor=west},
              inner sep=1pt,
              outer sep=0pt,
              row sep=-2pt
            }]
        \addplot [mark=*, mark size=0.8pt, solid, black, on layer=axis background] table [x=tt,y=y] {./verFault_ytt.txt};
        \addplot [mark=none, solid, line width=1pt, red,  on layer=axis foreground] table [x=tt,y=y] {./verFault_yttAnalyticalSolution.txt};
        \legend{{Numerical Solution},{Analitical Solution}}
      \end{axis}
    \end{tikzpicture}
    \caption{}
    \label{fig:verticalFault_tt_b}
  \end{subfigure}
  \hfill
  \begin{subfigure}[b]{.475\linewidth}
    \centering
    \begin{tikzpicture}
      \pgfplotsset{scaled y ticks=false}
      \begin{axis}[ 
          width=1.0\linewidth,height=0.9\linewidth,
          grid=both,
          minor x tick num = 4,
          minor y tick num = 4,
          major grid style={line width=0.4pt,draw=gray!50},
          minor grid style={line width=0.2pt,draw=gray!15},
          xmin=0,xmax=0.20,ymin=-250,ymax=250,
          ylabel={{ $y$ [m]} }, xlabel={ { $\|\tensorOne{g}_T\|_2$ [m] }},
          xtick distance=0.05,
          ytick distance=100,
          ylabel near ticks,xlabel near ticks,
          legend style={
              anchor=north east,
              cells={anchor=west},
              inner sep=1pt,
              outer sep=0pt,
              row sep=-2pt
            }]
        \addplot [mark=*, mark size=0.8pt, solid, black, on layer=axis background] table [x=slip,y=y] {./verFault_yslip.txt};
        \addplot [mark=none, solid, line width=1pt, red,  on layer=axis foreground] table [x=slip,y=y] {./verFault_yslipAnalyticalSolution.txt};
        \legend{{Numerical Solution},{Analitical Solution}}
      \end{axis}
    \end{tikzpicture}
    \caption{}
    \label{fig:verticalFault_slip_b}
  \end{subfigure}
  \caption{Dislocated reservoir with a vertical fault: Comparison between numerical and analytical (a) tangential traction $\|\tensorOne{t}_T\|_2$ and (b) tangential slip $\| \tensorOne{g}_T \|_2$ along the fracture.
  }
  \label{fig:verticalFault_tt}
\end{figure}
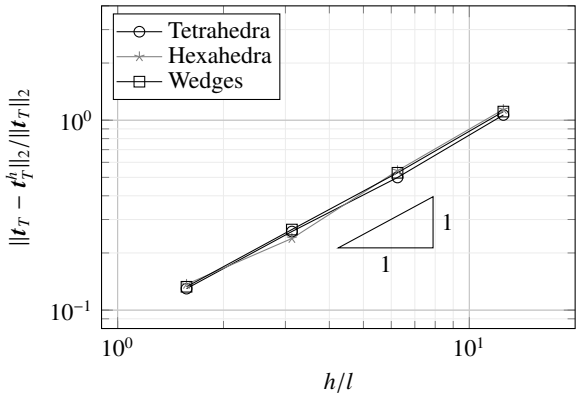
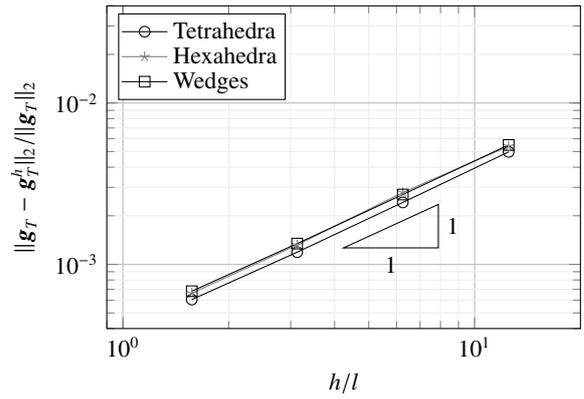
\begin{figure}
  \centering
  \small
  \begin{subfigure}[b]{.475\linewidth}
    \centering
    \begin{tikzpicture}
      \begin{loglogaxis}[ 
          width=\linewidth,
          height=0.75\linewidth,
          grid=both,
          major grid style={line width=0.4pt,draw=gray!50},
          minor grid style={line width=0.2pt,draw=gray!15},
          xmin=0.9e0,xmax=2e1,
          ymin=0.8e-1,ymax=0.4e1,
          xlabel={ $h/l$ },
          ylabel={ $\|\tensorOne{t}_T-\tensorOne{t}_T^h\|_2/\|\tensorOne{t}_T\|_2$ },
          ylabel near ticks,xlabel near ticks,
          legend style={
              at={(0.025,0.975)},
              anchor=north west,
              cells={anchor=west},
              inner sep=1pt,
              outer sep=0pt,
              row sep=-2pt
            }
        ]
        \addplot [black,mark=o] table [x=h,y=err_T] {./error_verFault.txt};
        \addplot [gray,mark=star] table [x=h,y=err_H] {./error_verFault.txt};
        \addplot [black,mark=square] table [x=h,y=err_W] {./error_verFault.txt};
        \logLogSlopeTriangle{0.7}{0.2}{0.25}{1}{black};
        \legend{{Tetrahedra},{Hexahedra},{Wedges}}
      \end{loglogaxis}
    \end{tikzpicture}
    \caption{}
    \label{fig:verticalFault_tt_a}
  \end{subfigure}
  \hfill
  \begin{subfigure}[b]{.475\linewidth}
    \centering
    \begin{tikzpicture}
      \begin{loglogaxis}[ 
          width=\linewidth,
          height=0.75\linewidth,
          grid=both,
          major grid style={line width=0.4pt,draw=gray!50},
          minor grid style={line width=0.2pt,draw=gray!15},
          xmin=0.9e0,xmax=2e1,
          ymin=0.4e-3,ymax=0.4e-1,
          xlabel={ $h/l$ },
          ylabel={ $\|\tensorOne{g}_T-\tensorOne{g}_T^h\|_2/\|\tensorOne{g}_T\|_2$  },
          ylabel near ticks,xlabel near ticks,
          legend style={
              at={(0.025,0.975)},
              anchor=north west,
              cells={anchor=west},
              inner sep=1pt,
              outer sep=0pt,
              row sep=-2pt
            }
        ]
        \addplot [black,mark=o] table [x=h,y=err_T_j] {./error_verFault.txt};
        \addplot [gray,mark=star] table [x=h,y=err_H_j] {./error_verFault.txt};
        \addplot [black,mark=square] table [x=h,y=err_W_j] {./error_verFault.txt};
        \logLogSlopeTriangle{0.7}{0.2}{0.25}{1}{black};
        \legend{{Tetrahedra},{Hexahedra},{Wedges}}
      \end{loglogaxis}
    \end{tikzpicture}
    \caption{}
    \label{fig:verticalFault_slip_a}
  \end{subfigure}
  \caption{Dislocated reservoir with a vertical fault: Convergence of the relative $L_2$-error in (a) tangential traction and (b) tangential slip.}
  \label{fig:verticalFault_slip}
\end{figure}
The error convergence profiles are provided in Figures \ref{fig:verticalFault_tt_a} and \ref{fig:verticalFault_slip_a}.
The profiles are linear both for the tangential traction and for the slip, confirming once again the accuracy and validating the formulation used for tetrahedral, hexahedral and wedge elements.

\subsection{Convergence analysis of nonlinear and linear solvers}
In this section, we examine three test cases designed to investigate nonlinear behavior and linear convergence. For this purpose, we have selected the following test cases:
\begin{itemize}
  \item {\it Stick-Slip-Open Case}: Two adjacent blocks are subjected to compression and shear, causing them to progressively slide against each other before separating (Figure \ref{fig:SSO_dom}).
  \item {\it Constant Slip Solution}: An analytical benchmark that simulates constant sliding between two blocks (Figure \ref{fig:CS_dom}).
  \item {\it T-Crack Case}: A 3D domain featuring a T-shaped fracture that is subject to pressurization (Figure \ref{fig:TF_dom}).
\end{itemize}
Each case in this subsection is selected to demonstrate the effectiveness of the Augmented Lagrangian Method (ALM) in specific scenarios. The Stick-Slip-Open case is used to analyze convergence when a state transition occurs across multiple elements. The Constant Slip Solution case investigates convergence when the slip condition completely alters the normal stress. In this example, as we will demonstrate in the following sections, there is a complete redistribution of normal stress to ensure that the equilibrium equation is satisfied. Finally, the T-Frac case illustrates how the formulation behaves in the presence of fault intersections.
For all these cases, we conduct a study on linear and nonlinear convergence, reporting the total number of linear and nonlinear iterations. Regarding the linear solver, we use the GMRES \cite{SaaSch86} Krylov subspace method preconditioned with Boomer AMG, powered by Hypre \cite{FalUlr02}, to solve the linear system in \cref{eq:NR1bubbles}. The preconditioner is computed using the separate displacement component technique introduced in~\cite{AxeGus78}.
The authors acknowledge that this may not be the optimal technique for preconditioning the system in \cref{eq:NR1bubbles}, but the development of ad hoc preconditioners for the augmented system with bubble degrees of freedom is beyond the scope of this paper. The primary goal of this comparison is to analyze how both nonlinear and linear convergence are affected by the type of problem, such as geometry and boundary conditions, and by the penalty parameters.
Moreover, this approach is compared here with the Lagrange Multiplier Method combined with an Active Set strategy \cite{FraCasWhiTch20} as implemented in GEOS \cite{Geos24}.
\begin{figure}
  \centering
  \hfill
  \begin{subfigure}[b]{.3\linewidth}
    \centering
    \includegraphics[height=12em]{./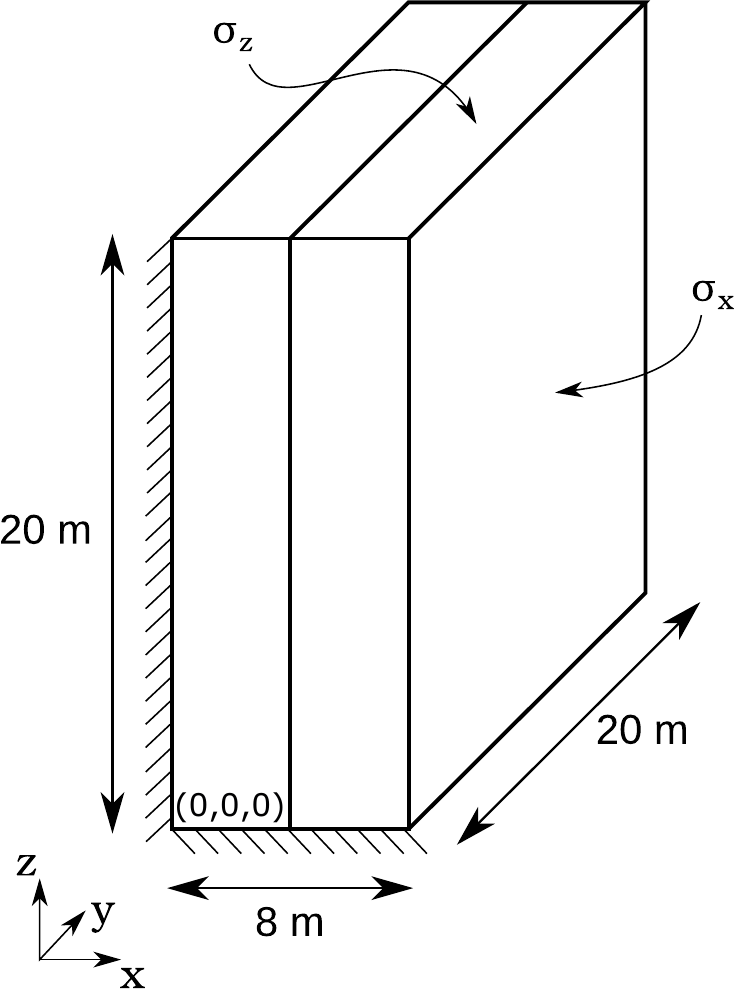}
    \caption{}
    \label{fig:SSO_dom}
  \end{subfigure}
  \begin{subfigure}[b]{.3\linewidth}
    \centering
    \includegraphics[height=12em]{./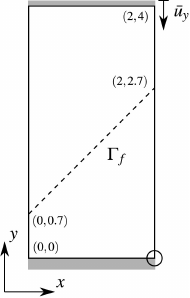}
    \caption{}
    \label{fig:CS_dom}
  \end{subfigure}
  \begin{subfigure}[b]{.3\linewidth}
    \centering
    \includegraphics[height=12em]{./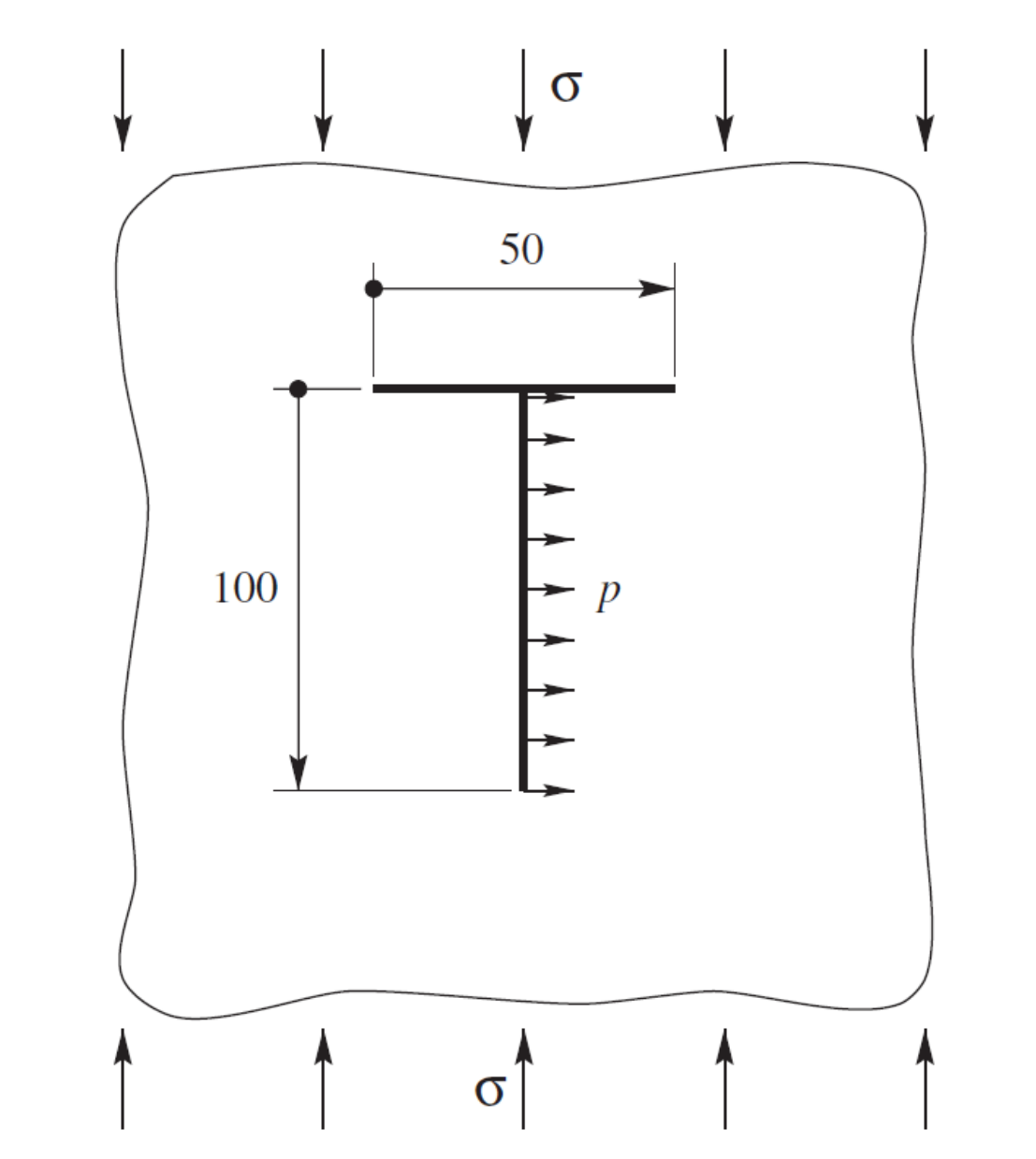}
    \caption{}
    \label{fig:TF_dom}
  \end{subfigure}
  \caption{Domain sketch for (a) Stick-Slip-Open Case, (b) Constant Slip Solution, and (c) T-Crack Case.}
\end{figure}

\subsubsection{Stick-Slip-Open Case}
The problem is illustrated in Figure \ref{fig:SSO_dom}. A prismatic elastic body is divided into two blocks by a vertical crack. The prism has a rectangular base measuring 8 m by 20 m and a height of 20 m. The domain is discretized using a structured mesh consisting of $4\times10\times10$ hexahedral elements.
The boundary conditions and loads are prescribed as shown in Figure \ref{fig:SSO_dom}.
Specifically, normal tractions $\sigma_x$ and $\sigma_z$ are applied to the top ($z$ = 20 m) and the right ($x$ = 8 m) face of the rightmost block.
The linear elastic material has a Young's modulus of $E=450$ MPa and a Poisson's ratio of $\nu=0.3$. An initial compressive stress state along the x-direction is assumed. The crack exhibits zero cohesion and a friction angle of $\theta=30^o$.
\input{figure12}
During the simulation, the tractions $\sigma_x$ and $\sigma_z$ vary as shown in Figure \ref{fig:SSO_sigma_history}, which also includes snapshots of the fracture state at selected time steps.
We consider a ten-step loading sequence.
Figures \ref{fig:SSO} show the cumulative total number of Uzawa, Newton and GMRES iterations over the entire simulation. In these two plots, we compare the two solving strategies described in section \ref{subsec:solutionStrategy}, as a function of the penalty parameter $\varepsilon_N$, which is set to the default value of 10$\bar{E}/h$.
Here, $\bar{E}$ represents the average Young's Modulus of the neighboring cells, and $h$ denotes the characteristic size of these cells.
It can be observed that Algorithm \ref{alg:alternativeAlgo} surpasses Algorithm \ref{alg:uzawaproc} in terms of performance, converging faster when the penalty parameter is around its default value. Notably, the estimate of 10$\bar{E}/h$ is close to the optimal value for many cases. However, fine-tuning this parameter specific to the problem at hand can yield further improvements in overall performance. Therefore, the choice of the iterative penalty parameter is crucial to ensuring the effectiveness of the method.
Indeed, larger values of the penalty parameter lead to faster convergence of the nonlinear loop. However, setting the penalty parameter too high can exacerbate the ill-conditioning of the linear system, which increases the number of iterations required by the linear solver. This can reintroduce the issues typically associated with penalty methods.
Figure \ref{fig:SSO_bar} illustrates the number of iterations obtained using the default setup for all the algorithms described in \cref{subsec:solutionStrategy}. It also includes, for comparison, the results obtained using a Lagrange Multiplier combined with an Active Set method \cite{FraCasWhiTch20}.
All the Augmented Lagrangian algorithms outperform the Lagrange multiplier method in this case. Additionally, the reader should keep in mind that the cost of a single iteration using the Lagrange Multiplier method is slightly higher, as it requires solving a Saddle Point problem.
Moreover, the symmetric version of \cref{alg:uzawaproc,alg:alternativeAlgo} can leverage the presence of a symmetric positive definite (SPD) system following the linearization, which provides several advantages from numerical point of view.
\begin{figure}
  \small
  \begin{subfigure}[b]{.33\linewidth}
    \centering
    \begin{tikzpicture}
      \pgfplotsset{scaled y ticks=false}
      \begin{loglogaxis}[ 
        width=\linewidth,height=1.25\linewidth,
        grid=both,
        major grid style={line width=0.4pt,draw=gray!50},
        minor grid style={line width=0.2pt,draw=gray!15},
        xmin=0.05,xmax=200.0,ymin=10,ymax=1000.0,
        xlabel={{ ${\varepsilon h}/{\overline{E}}$ } },
        ylabel={Uzawa iteration count},
        ylabel near ticks,xlabel near ticks,
        legend style={
            anchor=north east,
            cells={anchor=west},
            inner sep=1pt,
            outer sep=0pt,
            row sep=-2pt
          }]
        \addplot [black,mark=square] table [x=penalty,y=nUz1] {./solids-stick-slip-open.txt};
      \end{loglogaxis}
    \end{tikzpicture}
    \caption{}
  \end{subfigure}
  \hfill
  \begin{subfigure}[b]{.33\linewidth}
    \centering
    \begin{tikzpicture}
      \pgfplotsset{scaled y ticks=false}
      \begin{loglogaxis}[ 
        width=\linewidth,height=1.25\linewidth,
        grid=both,
        major grid style={line width=0.4pt,draw=gray!50},
        minor grid style={line width=0.2pt,draw=gray!15},
        xmin=0.05,xmax=200.0,ymin=10,ymax=1000.0,
        xlabel={{ ${\varepsilon h}/{\overline{E}}$ } },
        ylabel={Newton iteration count},
        ylabel near ticks,xlabel near ticks,
        legend style={
            anchor=north east,
            cells={anchor=west},
            inner sep=1pt,
            outer sep=0pt,
            row sep=-2pt
          }]
        \addplot [black,mark=*] table [x=penalty,y=nNL1] {./solids-stick-slip-open.txt};
        \addplot [black,mark=o] table [x=penalty,y=nNL2] {./solids-stick-slip-open.txt};
        \legend{{\cref{alg:uzawaproc}},{\cref{alg:alternativeAlgo}}}
      \end{loglogaxis}
    \end{tikzpicture}
    \caption{}
  \end{subfigure}
  \hfill
  \begin{subfigure}[b]{.33\linewidth}
    \centering
    \begin{tikzpicture}
      \pgfplotsset{scaled y ticks=false}
      \begin{loglogaxis}[  
        width=\linewidth,height=1.25\linewidth,
        grid=both,
        major grid style={line width=0.4pt,draw=gray!50},
        minor grid style={line width=0.2pt,draw=gray!15},
        xmin=0.05,xmax=200.0,ymin=800,ymax=10000.0,
        xlabel={{ ${\varepsilon h}/{\overline{E}}$ } },
        ylabel={GMRES iteration count},
        ylabel near ticks,xlabel near ticks,
        legend style={
            at={(0.5, 0.975)},
            anchor=north,
            cells={anchor=west},
            inner sep=1pt,
            outer sep=0pt,
            row sep=-2pt
          }]
        \addplot [black,mark=*] table [x=penalty,y=nL1] {./solids-stick-slip-open.txt};
        \addplot [black,mark=o] table [x=penalty,y=nL2] {./solids-stick-slip-open.txt};
        \legend{{\cref{alg:uzawaproc}},{\cref{alg:alternativeAlgo}}}
      \end{loglogaxis}
    \end{tikzpicture}
    \caption{}
  \end{subfigure}
  \caption{Stick-Slip-Open Case: Number of Uzawa (a), Newton (b), and GMRES (c) iterations as a function of the penalty parameter, with its default value equal to $10 \bar{E} / h$.}\label{fig:SSO}
\end{figure}
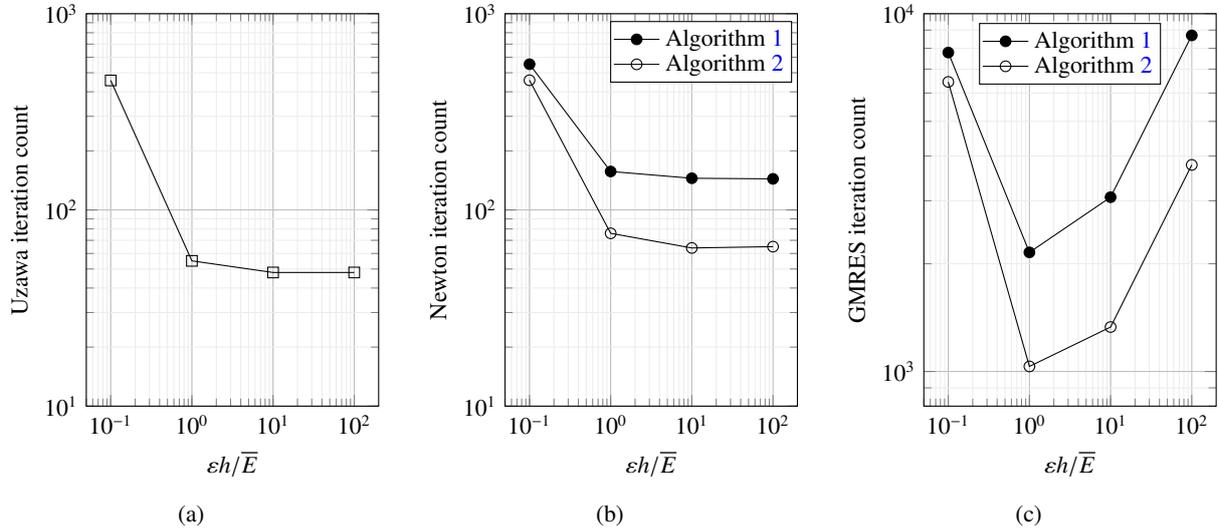
\begin{figure}
  \centering
  \small
  \begin{tikzpicture}
    \begin{axis}[
        ybar,
        bar width=20pt,
        width=\linewidth,
        height=0.4\linewidth,
        enlarge x limits=0.15,
        ymin=0, ymax=3500,
        ylabel={Average time step size [days]},
        log origin=infty,
        legend style={
            at={(0.5,-0.15)},
            anchor=north,legend columns=1},
        ylabel={{Iteration count}},
        symbolic x coords={LM,UzNSym,UzSym,Alg2},
        xtick=data,
        xticklabels={{LMM-AS},{Uz-NSym},{Uz-Sym},{Alg2}},
        xticklabels={{\shortstack{Lagrange Multiplier Method\\(Active Set)}},
            {\shortstack{\cref{alg:uzawaproc}\\(nonsymmetric)}},
            {\shortstack{\cref{alg:uzawaproc}\\(symmetric)}},
            {\shortstack{\cref{alg:alternativeAlgo}\\(symmetric)}}},
        ytick distance = 1000,
        point meta=explicit,
        nodes near coords,
        every node near coord/.append style={
            /pgf/number format/fixed,
            /pgf/number format/precision=0,
            /pgf/number format/zerofill,
            font=\small,
          },
        ybar legend,
        legend style={
            at={(0.99,0.975)},
            anchor=north east,
            legend columns=1,
            legend cell align=left,
          },
      ]
      \addplot+[
        draw=WildStrawberry!90!black,
        fill=WildStrawberry!40,
        every node near coord/.append style={text=WildStrawberry!90!black}
      ] coordinates {(LM, 133)[133] (UzNSym,  145)[145] (UzSym,  142)[142] (Alg2,  64)[64] };
      \addplot+[
        draw=WildStrawberry!80,
        fill=WildStrawberry!10,
        every node near coord/.append style={text=WildStrawberry!70}
      ] coordinates {  (LM, 3137)[3137] (UzNSym, 3070)[3070] (UzSym, 2952)[2952] (Alg2, 1331)[1331]};
      \addplot+[
        draw=gray,
        fill=lightgray!10,
        every node near coord/.append style={text=gray!85!black}
      ] coordinates { (LM,18)[18] (UzNSym, 48)[48] (UzSym, 51)[51] };
      \legend{ {Newton}, {GMRES}, {Uzawa/Active Set}}
    \end{axis}
  \end{tikzpicture}
  \caption{Stick-Slip-Open Case: Number of linear and nonlinear iterations for all examined algorithms. The number of Uzawa iterations or Active Set loops is shown in the rightmost bar for each corresponding algorithm. } \label{fig:SSO_bar}
\end{figure}
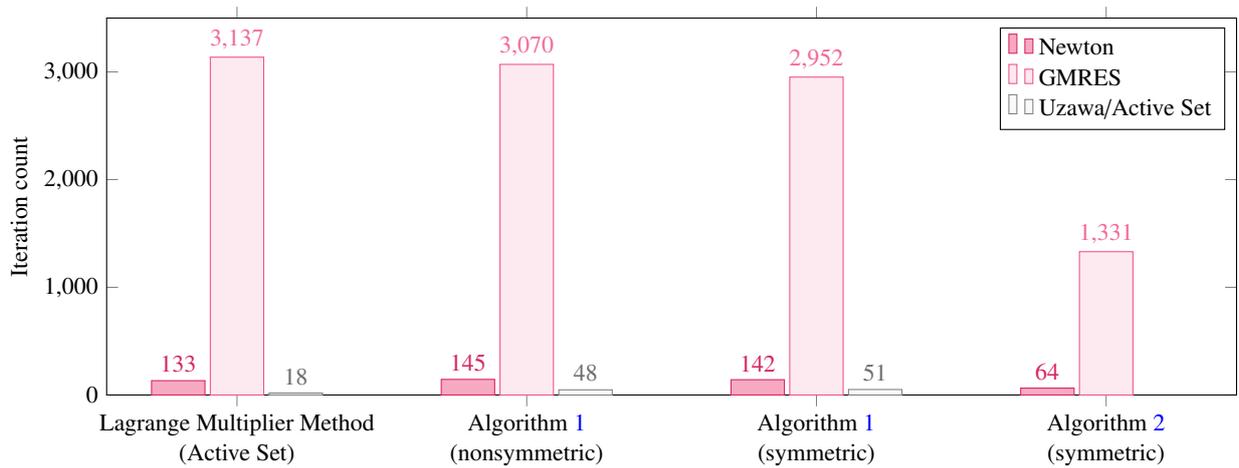
\subsubsection{Constant Slip Solution}
This analytical benchmark was initially proposed in \cite{Bor08}. The representation of the 2D model domain is shown in Figure \ref{fig:CS_dom}, where the lower boundary is constrained, the circled corner is also constrained in the vertical direction, and a uniform displacement is imposed on the upper boundary.
The simulation is carried out using a single time step, and the mesh consists of 3,610 hexahedral elements and 4,680 nodes.
The material is homogeneous, characterized by specific elastic parameters $E=250$ MPa and $\nu=0.3$. Coulomb's friction parameters are defined as $\theta=5.71$ and $c=0$, resulting in a friction coefficient of 0.1. The solution represents a constant sliding on the fracture at a defined value $\|\tensorOne{g}_T\|_2 = 0.1 \sqrt{2}$.
Because the shear vector direction is predetermined in a 2D setting, we conduct the simulation using a 3D mesh to effectively capture the nonlinearity introduced by the friction direction's dependence on the relative displacement rate.
The behavior of the considered solving strategies is notably different under the conditions analyzed in this problem. Ignoring the non-symmetric part results in a significant increase in the total number of nonlinear iterations. This is primarily due to the fact that, for each nonlinear iteration, changes in the tangential stress strongly influence the value of the normal stress to satisfy the equilibrium of the two bodies (\cref{fig:CS}).
The comparison presented in \cref{fig:CS_bar} shows that the performance of the Augmented Lagrangian Method is comparable to that of the Lagrange Multiplier with Active Set method, provided that nonsymmetry is adequately taken into account.
\begin{figure}
  \small
  \begin{subfigure}[b]{.33\linewidth}
    \centering
    \begin{tikzpicture}
      \pgfplotsset{scaled y ticks=false}
      \begin{loglogaxis}[ 
        width=\linewidth,height=1.25\linewidth,
        grid=both,
        major grid style={line width=0.4pt,draw=gray!50},
        minor grid style={line width=0.2pt,draw=gray!15},
        xmin=0.005,xmax=200.0,ymin=1,ymax=100.0,
        xlabel={{ ${\varepsilon h}/{\overline{E}}$ } },
        ylabel={Uzawa iteration count},
        ylabel near ticks,xlabel near ticks,
        legend style={
            cells={anchor=west},
            inner sep=1pt,
            outer sep=0pt,
            row sep=-2pt
          }]
        \addplot [black,mark=square] table [x=penalty,y=nUz1] {./constantSlip.txt};

      \end{loglogaxis}
    \end{tikzpicture}
    \caption{}
  \end{subfigure}
  \hfill
  \begin{subfigure}[b]{.33\linewidth}
    \centering
    \begin{tikzpicture}
      \pgfplotsset{scaled y ticks=false}
      \begin{loglogaxis}[ 
        width=\linewidth,height=1.25\linewidth,
        grid=both,
        major grid style={line width=0.4pt,draw=gray!50},
        minor grid style={line width=0.2pt,draw=gray!15},
        xmin=0.005,xmax=200.0,ymin=1,ymax=100.0,
        xlabel={{ ${\varepsilon h}/{\overline{E}}$ } },
        ylabel={Newton iteration count},
        ylabel near ticks,xlabel near ticks,
        legend style={
            cells={anchor=west},
            inner sep=1pt,
            outer sep=0pt,
            row sep=-2pt
          }]
        \addplot [black,mark=*] table [x=penalty,y=nNL1] {./constantSlip.txt};
        \addplot [black,mark=o] table [x=penalty,y=nNL2] {./constantSlip.txt};
        \legend{{\cref{alg:uzawaproc}},{\cref{alg:alternativeAlgo}}}
      \end{loglogaxis}
    \end{tikzpicture}
    \caption{}
  \end{subfigure}
  \hfill
  \begin{subfigure}[b]{.33\linewidth}
    \centering
    \begin{tikzpicture}
      \pgfplotsset{scaled y ticks=false}
      \begin{loglogaxis}[  
        width=\linewidth,height=1.25\linewidth,
        grid=both,
        major grid style={line width=0.4pt,draw=gray!50},
        minor grid style={line width=0.2pt,draw=gray!15},
        xmin=0.005,xmax=200.0,ymin=50,ymax=4000.0,
        xlabel={{ ${\varepsilon h}/{\overline{E}}$ } },
        ylabel={GMRES iteration count},
        ylabel near ticks,xlabel near ticks,
        legend style={
            at={(0.5, 0.975)},
            anchor=north,
            cells={anchor=west},
            inner sep=1pt,
            outer sep=0pt,
            row sep=-2pt
          }]
        \addplot [black,mark=*] table [x=penalty,y=nL1] {./constantSlip.txt};
        \addplot [black,mark=o] table [x=penalty,y=nL2] {./constantSlip.txt};
        \legend{{\cref{alg:uzawaproc}},{\cref{alg:alternativeAlgo}}}
      \end{loglogaxis}
    \end{tikzpicture}
    \caption{}
  \end{subfigure}
  \caption{Constant Slip Solution: Number of Uzawa (a), Newton (b), and GMRES (c) iterations as a function of the penalty parameter, with its default value equal to $10 \bar{E} / h$.}\label{fig:CS}
\end{figure}
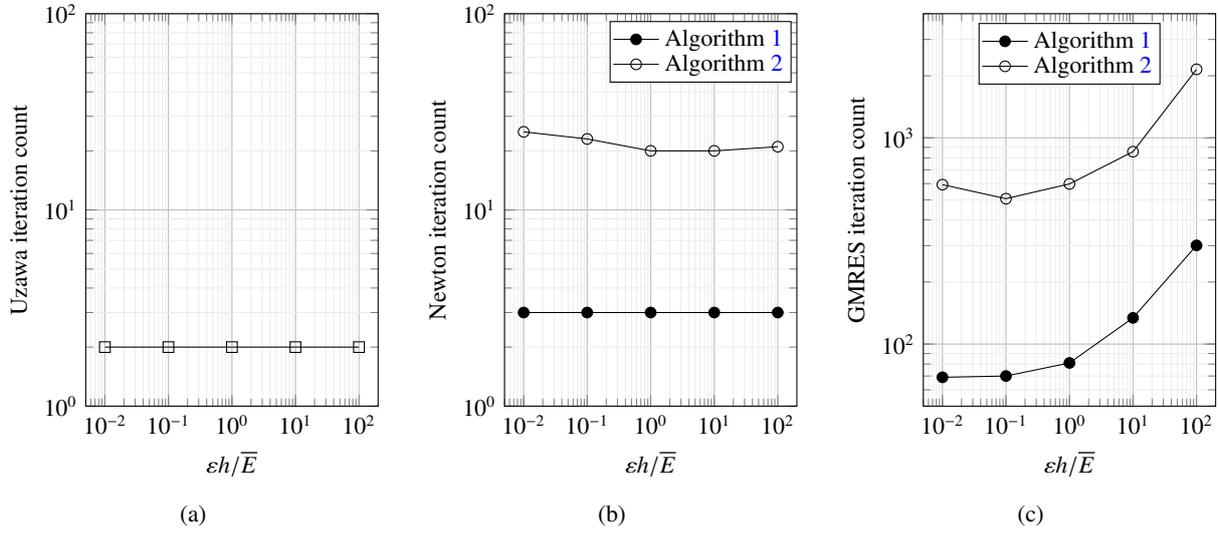
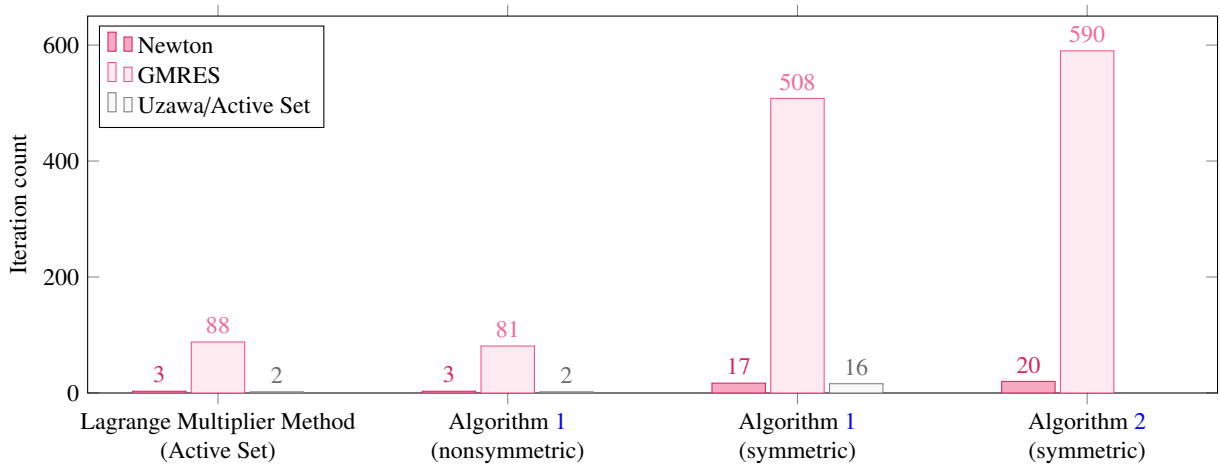
\begin{figure}
  \centering
  \small
  \begin{tikzpicture}
    \begin{axis}[
        ybar,
        bar width=20pt,
        width=\linewidth,
        height=0.4\linewidth,
        enlarge x limits=0.15,
        ymin=0, ymax=650,
        ylabel={Average time step size [days]},
        log origin=infty,
        legend style={
            at={(0.5,-0.15)},
            anchor=north,legend columns=1},
        ylabel={{Iteration count}},
        symbolic x coords={LM,UzNSym,UzSym,Alg2},
        xtick=data,
        xticklabels={{LMM-AS},{Uz-NSym},{Uz-Sym},{Alg2}},
        xticklabels={{\shortstack{Lagrange Multiplier Method\\(Active Set)}},
            {\shortstack{\cref{alg:uzawaproc}\\(nonsymmetric)}},
            {\shortstack{\cref{alg:uzawaproc}\\(symmetric)}},
            {\shortstack{\cref{alg:alternativeAlgo}\\(symmetric)}}},
        ytick distance = 200,
        point meta=explicit,
        nodes near coords,
        every node near coord/.append style={
            /pgf/number format/fixed,
            /pgf/number format/precision=0,
            /pgf/number format/zerofill,
            font=\small,
          },
        ybar legend,
        legend style={
            at={(0.01,0.975)},
            anchor=north west,
            legend columns=1,
            legend cell align=left,
          },
      ]
      \addplot+[
        draw=WildStrawberry!90!black,
        fill=WildStrawberry!40,
        every node near coord/.append style={text=WildStrawberry!90!black}
      ] coordinates {(LM, 3)[3] (UzNSym,  3)[3] (UzSym,  17)[17] (Alg2, 20)[20] };
      \addplot+[
        draw=WildStrawberry!80,
        fill=WildStrawberry!10,
        every node near coord/.append style={text=WildStrawberry!70}
      ] coordinates { (LM, 88)[88] (UzNSym, 81)[81] (UzSym, 508)[508] (Alg2, 590)[590] };
      \addplot+[
        draw=gray,
        fill=lightgray!10,
        every node near coord/.append style={text=gray!85!black}
      ] coordinates {(LM,2)[2] (UzNSym,   2)[2] (UzSym,  16)[16] };
      \legend{ {Newton}, {GMRES}, {Uzawa/Active Set}}
    \end{axis}
  \end{tikzpicture}
  \caption{Constant Slip Solution: Number of linear and nonlinear iterations for all examined algorithms. The number of Uzawa iterations or Active Set loops is shown in the rightmost bar for each corresponding algorithm.}
  \label{fig:CS_bar}
\end{figure}
\subsubsection{T-Crack}
In this example, we simulate two fractures intersecting at right angles within a 3D infinite domain, subjected to a constant uniaxial compressive remote stress. The problem setup is thoroughly described in \cite{PhaNapGraKap03}, where it is addressed using the symmetric Galerkin boundary element method.
Specifically, we consider two intersecting fractures under a remote compressive stress constraint, as illustrated in Figure \ref{fig:TF_dom}. Both fractures lie within an infinite, homogeneous, isotropic, and elastic medium.
The mesh contains $300 \times 300 \times 2$ hexahedral elements in the $x$, $y$, and $z$ directions, respectively. Near the cracks, the grid is refined with an element size of 10 m, while in the rest of the domain the element size is 45 m.
The simulation, performed over ten loading steps, models a pressurized vertical fracture intersecting a horizontal fracture at its midpoint.
The combination of uniaxial compression, frictional contact, and the opening of the vertical fracture induces mechanical deformations in the surrounding rock, leading to the sliding of the horizontal fracture.
In this test case, we can observe the effectiveness of this discretization in managing fracture intersections. The convergence of Algorithms \ref{alg:uzawaproc} and \ref{alg:alternativeAlgo} is quite similar.
Figure \ref{fig:TF} demonstrates that the optimal value of $\varepsilon$ results from a trade-off aimed at maximizing the penalty value to reduce nonlinear iterations without compromising the conditioning of the system $\hat{A}_{uu}$.
\begin{figure}
  \small
  \begin{subfigure}[b]{.33\linewidth}
    \centering
    \begin{tikzpicture}
      \pgfplotsset{scaled y ticks=false}
      \begin{loglogaxis}[ 
        width=\linewidth,height=1.25\linewidth,
        grid=both,
        major grid style={line width=0.4pt,draw=gray!50},
        minor grid style={line width=0.2pt,draw=gray!15},
        xmin=0.05,xmax=200.0,ymin=4,ymax=100.0,
        xlabel={{ ${\varepsilon h}/{\overline{E}}$ } },
        ylabel={Uzawa iteration count},
        ylabel near ticks,xlabel near ticks,
        legend style={
            cells={anchor=west},
            inner sep=1pt,
            outer sep=0pt,
            row sep=-2pt
          }]
        \addplot [black,mark=square] table [x=penalty,y=nUz1] {./tFrac.txt};
      \end{loglogaxis}
    \end{tikzpicture}
    \caption{}
  \end{subfigure}
  \hfill
  \begin{subfigure}[b]{.33\linewidth}
    \centering
    \begin{tikzpicture}
      \pgfplotsset{scaled y ticks=false}
      \begin{loglogaxis}[ 
        width=\linewidth,height=1.25\linewidth,
        grid=both,
        major grid style={line width=0.4pt,draw=gray!50},
        minor grid style={line width=0.2pt,draw=gray!15},
        xmin=0.05,xmax=200.0,ymin=4,ymax=100.0,
        xlabel={{ ${\varepsilon h}/{\overline{E}}$ } },
        ylabel={Newton iteration count},
        ylabel near ticks,xlabel near ticks,
        legend style={
            cells={anchor=west},
            inner sep=1pt,
            outer sep=0pt,
            row sep=-2pt
          }]
        \addplot [black,mark=square] table [x=penalty,y=nUz1] {./tFrac.txt};
        \addplot [black,mark=*] table [x=penalty,y=nNL1] {./tFrac.txt};
        \addplot [black,mark=o] table [x=penalty,y=nNL2] {./tFrac.txt};
        \legend{{\cref{alg:uzawaproc}},{\cref{alg:alternativeAlgo}}}
      \end{loglogaxis}
    \end{tikzpicture}
    \caption{}
  \end{subfigure}
  \hfill
  \begin{subfigure}[b]{.33\linewidth}
    \centering
    \begin{tikzpicture}
      \pgfplotsset{scaled y ticks=false}
      \begin{loglogaxis}[   
        width=\linewidth,height=1.25\linewidth,
        grid=both,
        major grid style={line width=0.4pt,draw=gray!50},
        minor grid style={line width=0.2pt,draw=gray!15},
        xmin=0.05,xmax=200.0,ymin=90,ymax=2000.0,
        xlabel={{ ${\varepsilon h}/{\overline{E}}$ } },
        ylabel={GMRES iteration count},
        ylabel near ticks,xlabel near ticks,
        legend style={
            cells={anchor=west},
            inner sep=1pt,
            outer sep=0pt,
            row sep=-2pt
          }]
        \addplot [black,mark=*] table [x=penalty,y=nL1] {./tFrac.txt};
        \addplot [black,mark=o] table [x=penalty,y=nL2] {./tFrac.txt};
        \legend{{\cref{alg:uzawaproc}},{\cref{alg:alternativeAlgo}}}
      \end{loglogaxis}
    \end{tikzpicture}
    \caption{}
  \end{subfigure}
  \caption{T-Crack Case: Number of Uzawa (a), Newton (b), and GMRES (c) iterations as a function of the penalty parameter.}\label{fig:TF}
\end{figure}
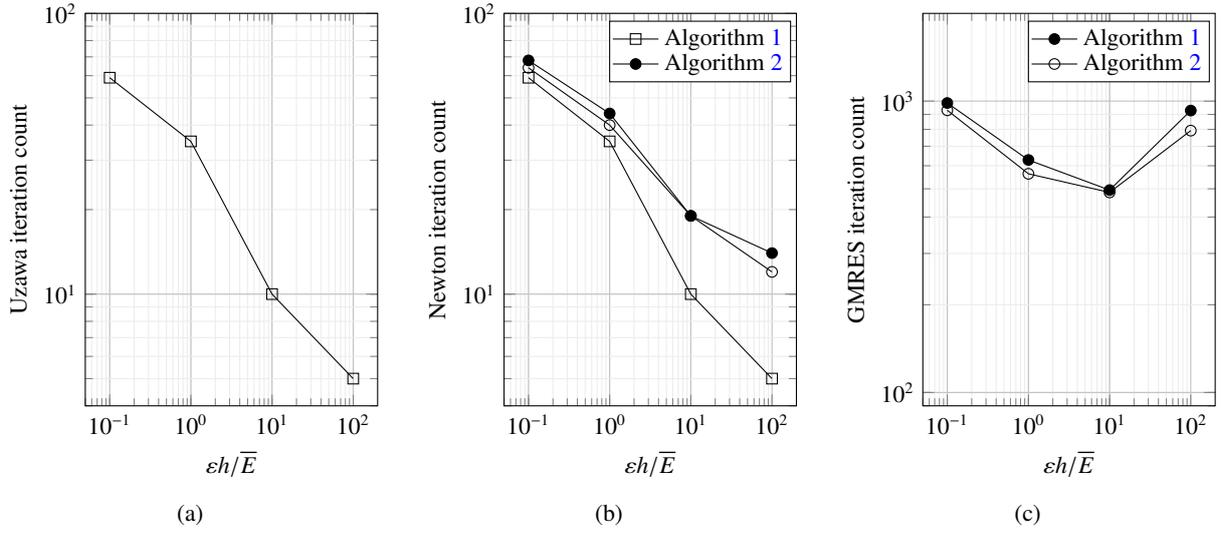
\begin{figure}
  \centering
  \small
  \begin{tikzpicture}
    \begin{axis}[
        ybar,
        bar width=20pt,
        width=\linewidth,
        height=0.4\linewidth,
        enlarge x limits=0.15,
        ymin=0, ymax=1350,
        ylabel={Average time step size [days]},
        log origin=infty,
        legend style={
            at={(0.5,-0.15)},
            anchor=north,legend columns=1},
        ylabel={{Iteration count}},
        symbolic x coords={LM,UzNSym,UzSym,Alg2},
        xtick=data,
        xticklabels={{LMM-AS},{Uz-NSym},{Uz-Sym},{Alg2}},
        xticklabels={{\shortstack{Lagrange Multiplier Method\\(Active Set)}},
            {\shortstack{\cref{alg:uzawaproc}\\(nonsymmetric)}},
            {\shortstack{\cref{alg:uzawaproc}\\(symmetric)}},
            {\shortstack{\cref{alg:alternativeAlgo}\\(symmetric)}}},
        ytick distance = 400,
        point meta=explicit,
        nodes near coords,
        every node near coord/.append style={
            /pgf/number format/fixed,
            /pgf/number format/precision=0,
            /pgf/number format/zerofill,
            font=\small,
          },
        ybar legend,
        legend style={
            at={(0.99,0.975)},
            anchor=north east,
            legend columns=1,
            legend cell align=left,
          },
      ]
      \addplot+[
        draw=WildStrawberry!90!black,
        fill=WildStrawberry!40,
        every node near coord/.append style={text=WildStrawberry!90!black}
      ] coordinates {(LM, 51)[51] (UzNSym,  19)[19] (UzSym,  31)[31] (Alg2,  19)[19] };
      \addplot+[
        draw=WildStrawberry!80,
        fill=WildStrawberry!10,
        every node near coord/.append style={text=WildStrawberry!70}
      ] coordinates {  (LM, 1190)[1190] (UzNSym, 495)[495] (UzSym, 811)[811] (Alg2, 486)[486]};
      \addplot+[
        draw=gray,
        fill=lightgray!10,
        every node near coord/.append style={text=gray!85!black}
      ] coordinates { (LM,11)[11] (UzNSym, 10)[10] (UzSym, 24)[24] };
      \legend{ {Newton}, {GMRES}, {Uzawa/Active Set}}
    \end{axis}
  \end{tikzpicture}
  \caption{T-Crack Case: Number of linear and nonlinear iterations for all examined algorithms. The number of Uzawa iterations or Active Set loops is shown in the rightmost bar for each corresponding algorithm.}
  \label{fig:TF_bar}
\end{figure}
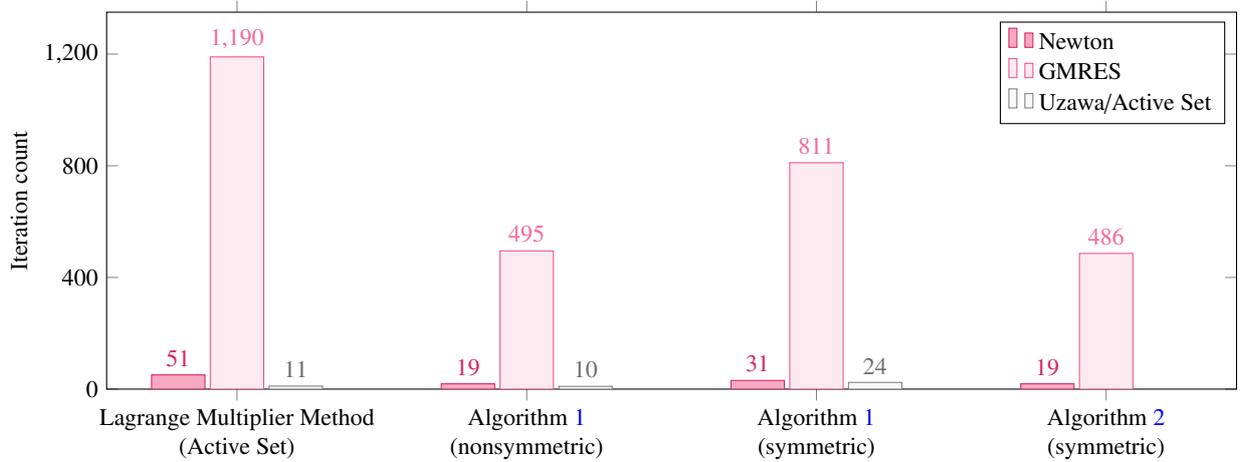
From Figure \ref{fig:TF_bar}, it can be observed that while the \cref{alg:uzawaproc,alg:alternativeAlgo}) are essentially equivalent, the Lagrange Multiplier -AS method struggles to converge when slip-stick conditions occur across multiple elements after the first time step of the simulation.
The Lagrange Multiplier-Active Set method is indeed very effective for managing stick or stick-open conditions, but it exhibits poor convergence behavior when numerous elements transition to a slip state.
In particular, the active set procedure is not very effective in determining the set of elements that share the same state.
\subsection{Field Scale Problem}
The results section concludes with the examination of the field-scale problem illustrated in \cref{fig:FSP_dom}. Here, we consider a highly complex fault system featuring nine faults discretized using a conforming mesh. These faults exhibit different orientations, are curved, and have multiple intersections (\cref{fig:FSP_faults_dom}). The mesh is obtained by extruding in the $y$-direction an unstructured triangulation of the vertical cross-section ($x$-$z$ plane) recently used in \cite{Bos_etal21}. The 3D mesh consists of 622,950 wedge elements, 312,885 nodes, and 12,450 interface elements.
The domain is subdivided into five distinct geological layers, as highlighted in the figure with different colors. The initial stress state is determined by balancing the gravitational forces, and the mechanical properties adhere to the relationships outlined in \cite{BauFerGamTea02}.We simulate pressurization in the red area to study a synthetic scenario of the geological carbon storage process. The pressure history is linear and reaches its maximum value of 9 MPa by the end of the simulation.
The problem has been effectively solved using the Augmented Lagrangian Method, achieving convergence in 3 Uzawa iterations, with a total of 13 Newton iterations. It is worth mentioning that the Lagrange Multiplier Method with Active Set does not converge for this specific case, highlighting the superior robustness of the ALM compared to the other method.
The solution to this problem is presented in \cref{fig:FSP_dz,fig:FSP_tn,fig:FSP_tt,fig:FSP_fs}. In particular, \cref{fig:FSP_dz} illustrates the uplift displacement resulting from the surface subsiding after the injection of CO2. The other figures aim to display the solution along the faults.
Firstly, it can be observed that the pressurization of the reservoir activates the faults. Specifically, there are surface elements that slide and open in the interface area adjacent to the reservoir, as shown in \cref{fig:FSP_fs}. \cref{fig:FSP_tn,fig:FSP_tt} represent the values of normal and tangential traction, respectively. These figures demonstrate that the face bubble stabilization effectively stabilizes the problem, producing an oscillation-free solution even in this complex geological configuration with high property heterogeneity.
For the sake of completeness, we also include the slip solution on the faults (\cref{fig:FSP_slip}), from which it can be observed that there are slips on the order of centimeters where the faults are activated.
\begin{figure}
  \centering
  \begin{subfigure}[b]{.49\linewidth}
    \centering
    \includegraphics[height=10em]{./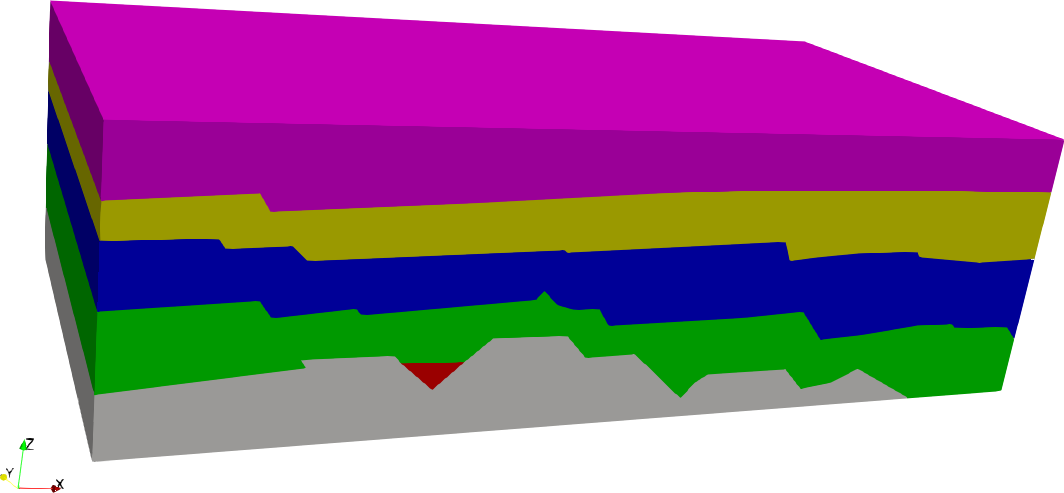}
    \caption{}
  \end{subfigure}
  \hfill
  \begin{subfigure}[b]{.45\linewidth}
    \small
    \centering
    \begingroup
    \renewcommand{\arraystretch}{1.4} 
    \begin{tabular}{lll}
      \toprule
      Quantity                  & Value                 & Unit \\
      \midrule
      Young's modulus ($E$)     & \cite{BauFerGamTea02} & [Pa] \\
      Poisson's ratio ($\nu$)   & $0.3$                 & [-]  \\
      Friction angle ($\theta$) & $30.0$                & [-]  \\
      \bottomrule
    \end{tabular}
    \endgroup
    \caption{}
  \end{subfigure}
  \caption{Field Scale Problem: (a) 3D computational domain and (b) physical properties. The 3D region is divided into six distinct layers indicated by the colors.}
  \label{fig:FSP_dom}
\end{figure}
\begin{figure}
  \centering
  \begin{subfigure}[b]{.49\linewidth}
    \centering
    \includegraphics[width=\linewidth]{./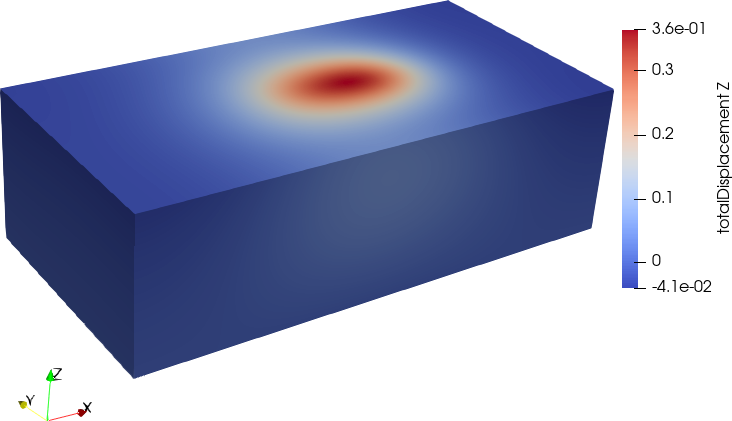}
    \caption{}
    \label{fig:FSP_dz}
  \end{subfigure}
  \hfill
  \begin{subfigure}[b]{.49\linewidth}
    \centering
    \includegraphics[width=\linewidth]{./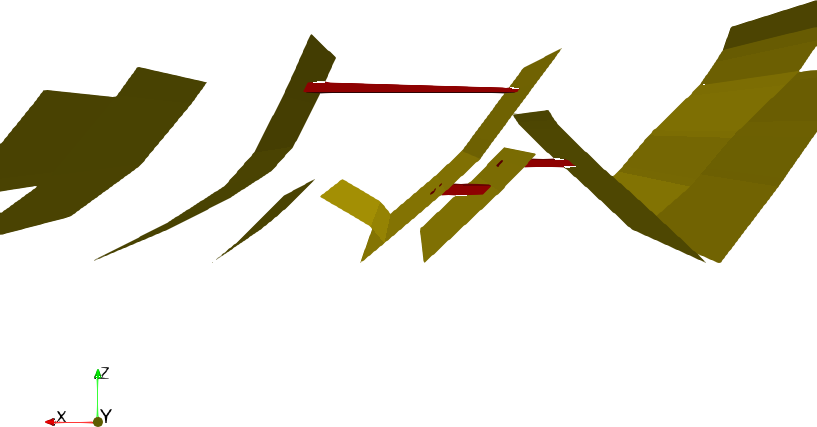}
    \caption{}
    \label{fig:FSP_faults_dom}
  \end{subfigure}
  \caption{Field Scale Problem: (a) Vertical displacement field; (b) 2D fault domain and reservoir regions.}
\end{figure}
\begin{figure}
  \centering
  \begin{subfigure}[b]{.49\linewidth}
    \centering
    \includegraphics[width=\linewidth]{./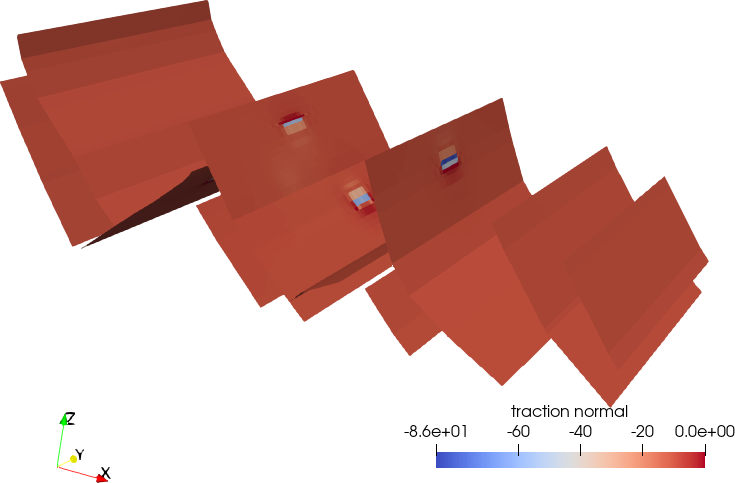}
    \caption{}
    \label{fig:FSP_tn}
  \end{subfigure}
  \hfill
  \begin{subfigure}[b]{.49\linewidth}
    \centering
    \includegraphics[width=\linewidth]{./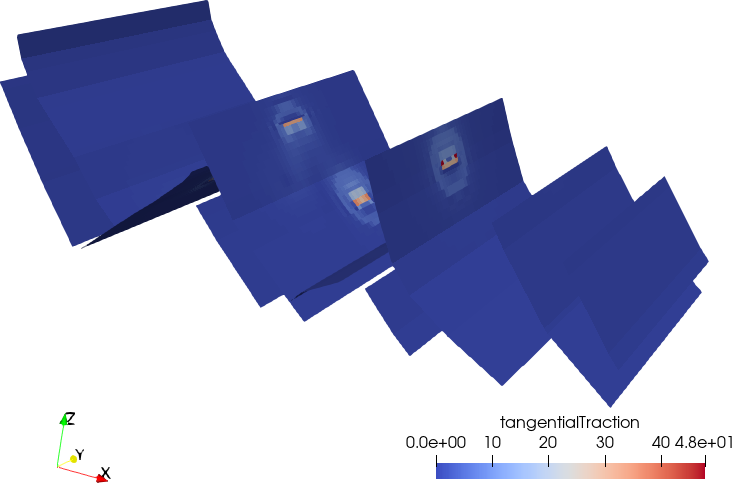}
    \caption{}
    \label{fig:FSP_tt}
  \end{subfigure}
  \caption{Field Scale Problem: Contour for (a) normal traction and (b) tangential traction.}
\end{figure}
\begin{figure}
  \centering
  \hfill
  \begin{subfigure}[b]{.49\linewidth}
    \centering
    \includegraphics[width=\linewidth]{./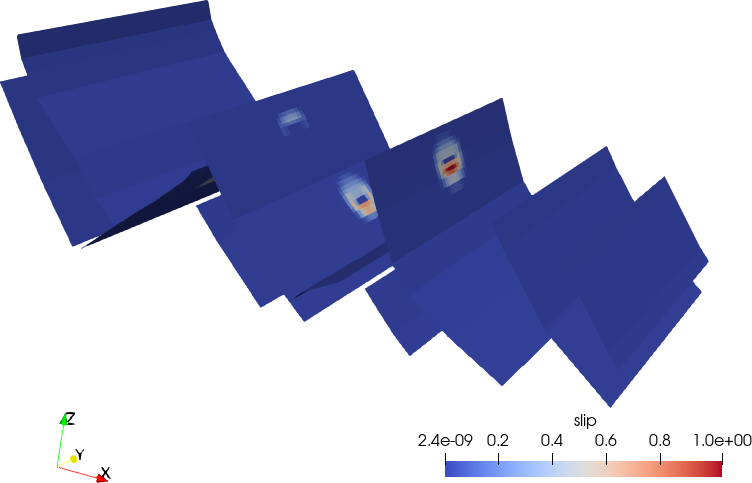}
    \caption{}
    \label{fig:FSP_slip}
  \end{subfigure}
  \begin{subfigure}[b]{.49\linewidth}
    \centering
    \includegraphics[width=\linewidth]{./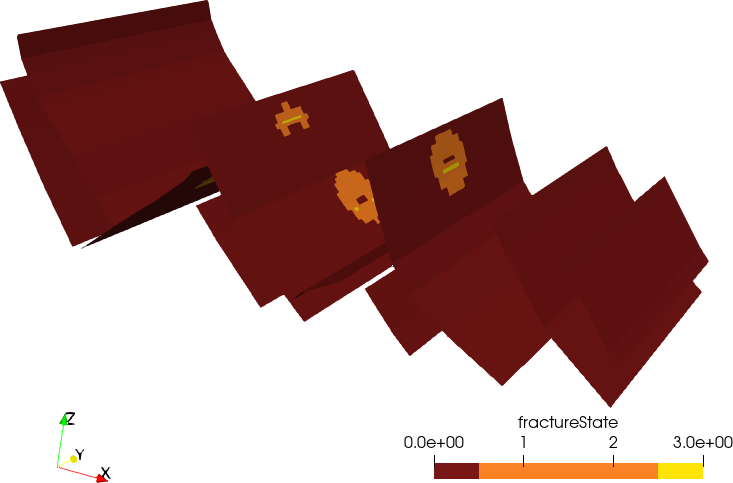}
    \caption{}
    \label{fig:FSP_fs}
  \end{subfigure}
  \caption{Field Scale Problem: Contour for (a) tangential slip and (b) fracture state.}
\end{figure}
%
%
\section{Conclusions}
In this work, we have introduced a stabilized Augmented Lagrangian formulation for contact mechanics, discretized using low-order linear piecewise displacement and constant cell-centered tractions.
The stabilization is achieved by enriching the displacement space with face bubble functions. This approach has proven effective in satisfying the inf-sup condition for all element types (hexahedra, wedges, and tetrahedra) and can be naturally incorporated into the iterative Augmented Lagrangian procedure.
Numerical results demonstrate that the proposed method  is both accurate and robust when dealing with the strong non linearity arising from contact mechanics applied to faults in subsurface environments when addressing the strong nonlinearities associated with contact mechanics on faults in subsurface environments. The nonlinear solver exhibits enhanced convergence behavior, showing not only greater robustness but, in some cases, faster convergence compared to methods previously reported in the literature.

\section{Data availability}
The method presented in this work was implemented in the GEOS open source simulation framework\citep{Geos24} and results were  produced using GEOS commit \href{https://github.com/GEOS-DEV/GEOS/commit/555a6cbbf8d2ae686cc22bef709a5e904030936d}{\texttt{555a6cbbf8d2ae686cc22bef709a5e904030936d}}.

\section*{Acknowledgments}
This work was funded by TotalEnergies and Chevron through the FC-MAELSTROM Project and by the Office of Fossil Energy and Carbon Management of the United States Department of Energy through project FEW0287. A portion of this work was performed under the auspices of the U.S. Department of Energy by Lawrence Livermore National Laboratory under Contract DE-AC52-07NA27344.
We thank Michael A. Puso and Jerome M. Solberg for their insights and the fruitful discussions.

\appendix

\section{Finite element matrices}\label{sec:appendixA}
Let $\tensorOne{u}^{h,k+1}_n = ( \sum_i u_i \tensorOne{\eta}_i + \sum_j \hat{u}_j \tensorOne{\beta}_j) $ and $\tensorOne{t}^{h,k}_n = \sum_k t_i \tensorOne{\mu}_i$.
%
%
Given the following two projection operators:
\begin{align}
   & P_{\tensorOne{n}_f} : \mathbb{R}^3 \rightarrow \mathbb{R}^3,
  \tensorOne{v} \mapsto P_{\tensorOne{n}_f} \tensorOne{v} = (\tensorOne{n}_f \otimes \tensorOne{n}_f) \tensorOne{v}, \\
   & P_{\tensorOne{n}_f}^{\perp} : \mathbb{R}^3 \rightarrow \mathbb{R}^3,
  \tensorOne{v} \mapsto P_{\tensorOne{n}_f}^{\perp} \tensorOne{v} = (\tensorTwo{1} - \tensorOne{n}_f \otimes \tensorOne{n}_f) \tensorOne{v},
\end{align}
the following relationships hold true:
\begin{align}
   & \llbracket \tensorOne{u}^{h,k+1}_n \rrbracket
  = P_{\tensorOne{n}_f} \llbracket \tensorOne{u}^{h,k+1}_n \rrbracket
  + P_{\tensorOne{n}_f}^{\perp} \llbracket \tensorOne{u}^{h,k+1}_n \rrbracket
  = \underbrace{
    \sum_i u_i P_{\tensorOne{n}_f} \llbracket \tensorOne{\eta}_i \rrbracket
    + \sum_j \hat{u}_j P_{\tensorOne{n}_f} \llbracket \tensorOne{\beta}_j \rrbracket
  }_{:= \tensorOne{g}_N(\tensorOne{u}^h)}
  + \underbrace{
    \sum_i u_i P_{\tensorOne{n}_f}^{\perp}  \llbracket \tensorOne{\eta}_i \rrbracket
    + \sum_j \hat{u}_j P_{\tensorOne{n}_f}^{\perp}  \llbracket \tensorOne{\beta}_j \rrbracket
  }_{:= \tensorOne{g}_T(\tensorOne{u}^h)},
  \\
   & g_N(\tensorOne{u}^{h,k+1}_n)
  = \tensorOne{g}_N(\tensorOne{u}^{h,k+1}_n) \cdot \tensorOne{n}_f
  = \sum_i u_i  \llbracket \tensorOne{\eta}_i \rrbracket \cdot \tensorOne{n}_f
  + \sum_j \hat{u}_j  \llbracket \tensorOne{\beta}_j \rrbracket \cdot \tensorOne{n}_f,
  \\
   & \Delta_n \tensorOne{g}_T (\tensorOne{u}^{h,k+1}_n)
  = \sum_i \Delta_n u_i P_{\tensorOne{n}_f}^{\perp}  \llbracket \tensorOne{\eta}_i \rrbracket
  + \sum_j \Delta_n \hat{u}_j P_{\tensorOne{n}_f}^{\perp}  \llbracket \tensorOne{\beta}_j \rrbracket,
  \\
   & \hat{\tensorOne{t}} ( g_N ,\Delta_n \tensorOne{g}_T)
  = \hat{\tensorOne{t}}_N ( g_N )
  + \hat{\tensorOne{t}}_T ( g_N ,\Delta_n \tensorOne{g}_T).
\end{align}
%
Then, the global residual block vectors associated with \cref{eq:NR1,eq:NR1bubbles} are expressed as follows:
\begin{linenomath}
  \begin{subequations}
    \begin{align}
      [\Vec{r}_b]_i
       &
      = ( \nabla^s \tensorOne{\beta}_i, \tensorTwo{\sigma}(\tensorOne{u}^{h,k+1}_n) )_{\Omega}
      + ( \llbracket \tensorOne{\beta}_i \rrbracket, \hat{\tensorOne{t}}_N ( g_N(\tensorOne{u}^{h,k+1}_n) )
      + \hat{\tensorOne{t}}_T ( g_N(\tensorOne{u}^{h,k+1}_n), \Delta_n \tensorTwo{g}_T(\tensorOne{u}^{h,k+1}_n) )_{\Gamma_f}
       & \forall i \in \{1, \ldots, n_b \} ,
      \\
      [\Vec{r}_u]_i
       &
      = ( \nabla^s \tensorOne{\eta}_i, \tensorTwo{\sigma}(\tensorOne{u}^{h,k+1}_n) )_{\Omega}
      + ( \llbracket \tensorOne{\eta}_i \rrbracket, \hat{\tensorOne{t}}_N ( g_N(\tensorOne{u}^{h,k+1}_n) )
      + \hat{\tensorOne{t}}_T ( g_N(\tensorOne{u}^{h,k+1}_n), \Delta_n \tensorTwo{g}_T(\tensorOne{u}^{h,k+1}_n) )_{\Gamma_f}
      - ( \tensorOne{\eta}_i, \bar{\tensorOne{t}} )_{\Gamma_{\sigma}}
       & \forall i \in \{1, \ldots, n_u \} ,
    \end{align}
  \end{subequations}
\end{linenomath}
with $n_b = \text{dim}( \boldsymbol{\mathcal{U}}^b )$ and $n_u = \text{dim}( \boldsymbol{\mathcal{U}}^{h,1} )$.
%
and, the global expressions for the submatrices appearing in the Jacobian matrix are as follows:
\begin{align}
  [\Mat{A}_{bb}]_{ij}
  &
  = \left( \nabla^s \tensorOne{\beta}_i, \evaluate{\frac{\partial \tensorTwo{\sigma}}{\partial \tensorTwo{\epsilon}}}{}{\ell}                                                                                                                                                                                                                                                          \nabla^s \tensorOne{\beta}_j \right)_{\Omega}
  + \left( \llbracket \tensorOne{\beta}_i \rrbracket, \evaluate{\frac{\partial \hat{\tensorOne{t}}_N}{\partial \tensorOne{g}_N}}{}{\ell}                                                                                                                                                                                                                                                           P_{\tensorOne{n}_f} \llbracket \tensorOne{\beta}_j \rrbracket \right)_{\Gamma_f}
  \nonumber \\
  &
  + \left( \llbracket \tensorOne{\beta}_i \rrbracket, \evaluate{ \frac{\partial \hat{\tensorOne{t}}_T}{\partial \tensorOne{g}_N}}{}{\ell}                                                                                                                                                                                                                                                          P_{\tensorOne{n}_f} \llbracket \tensorOne{\beta}_j \rrbracket \right)_{\Gamma_f}
  + \left( \llbracket \tensorOne{\beta}_i \rrbracket, \evaluate{\frac{\partial \hat{\tensorOne{t}}_T}{\partial \Delta_n \tensorOne{g}_T }}{}{\ell}                                                                                                                                                                                                                                                          P_{\tensorOne{n}_f}^{\perp} \llbracket \tensorOne{\beta}_j \rrbracket \right)_{\Gamma_f}
  & \forall (i,j) \in \{1, \ldots, n_b \} \times \{1, \ldots, n_b \} ,
  \\
  [\Mat{A}_{bu}]_{ij}
  &
  = \left( \nabla^s \tensorOne{\beta}_i, \evaluate{\frac{\partial \tensorTwo{\sigma}}{\partial \tensorTwo{\epsilon}}}{}{\ell}                                                                                                                                                                                                                                                          \nabla^s \tensorOne{\eta}_j \right)_{\Omega}
  + \left( \llbracket \tensorOne{\beta}_i \rrbracket, \evaluate{\frac{\partial \hat{\tensorOne{t}}_N}{\partial \tensorOne{g}_N}}{}{\ell}                                                                                                                                                                                                                                                           P_{\tensorOne{n}_f} \llbracket \tensorOne{\eta}_j \rrbracket \right)_{\Gamma_f}
  \nonumber \\
  &
  + \left( \llbracket \tensorOne{\beta}_i \rrbracket, \evaluate{ \frac{\partial \hat{\tensorOne{t}}_T}{\partial \tensorOne{g}_N}}{}{\ell}                                                                                                                                                                                                                                                          P_{\tensorOne{n}_f} \llbracket \tensorOne{\eta}_j \rrbracket \right)_{\Gamma_f}
  + \left( \llbracket \tensorOne{\beta}_i \rrbracket, \evaluate{\frac{\partial \hat{\tensorOne{t}}_T}{\partial \Delta_n \tensorOne{g}_T }}{}{\ell}                                                                                                                                                                                                                                                          P_{\tensorOne{n}_f}^{\perp} \llbracket \tensorOne{\eta}_j \rrbracket \right)_{\Gamma_f}
  & \forall (i,j) \in \{1, \ldots, n_b \} \times \{1, \ldots, n_u \} ,
  \\
  [\Mat{A}_{ub}]_{ij}
  &
  = \left( \nabla^s \tensorOne{\eta}_i, \evaluate{\frac{\partial \tensorTwo{\sigma}}{\partial \tensorTwo{\epsilon}}}{}{\ell}                                                                                                                                                                                                                                                          \nabla^s \tensorOne{\beta}_j \right)_{\Omega}
  + \left( \llbracket \tensorOne{\eta}_i \rrbracket, \evaluate{\frac{\partial \hat{\tensorOne{t}}_N}{\partial \tensorOne{g}_N}}{}{\ell}                                                                                                                                                                                                                                                           P_{\tensorOne{n}_f} \llbracket \tensorOne{\beta}_j \rrbracket \right)_{\Gamma_f}
  \nonumber \\
  &
  + \left( \llbracket \tensorOne{\eta}_i \rrbracket, \evaluate{ \frac{\partial \hat{\tensorOne{t}}_T}{\partial \tensorOne{g}_N}}{}{\ell}                                                                                                                                                                                                                                                          P_{\tensorOne{n}_f} \llbracket \tensorOne{\beta}_j \rrbracket \right)_{\Gamma_f}
  + \left( \llbracket \tensorOne{\eta}_i \rrbracket, \evaluate{\frac{\partial \hat{\tensorOne{t}}_T}{\partial \Delta_n \tensorOne{g}_T }}{}{\ell}                                                                                                                                                                                                                                                          P_{\tensorOne{n}_f}^{\perp} \llbracket \tensorOne{\beta}_j \rrbracket \right)_{\Gamma_f}
  & \forall (i,j) \in \{1, \ldots, n_u \} \times \{1, \ldots, n_b \} ,
  \\
  [\Mat{A}_{uu}]_{ij}
  &
  = \left( \nabla^s \tensorOne{\eta}_i, \evaluate{\frac{\partial \tensorTwo{\sigma}}{\partial \tensorTwo{\epsilon}}}{}{\ell}                                                                                                                                                                                                                                                          \nabla^s \tensorOne{\eta}_j \right)_{\Omega}
  + \left( \llbracket \tensorOne{\eta}_i \rrbracket, \evaluate{\frac{\partial \hat{\tensorOne{t}}_N}{\partial \tensorOne{g}_N}}{}{\ell}                                                                                                                                                                                                                                                           P_{\tensorOne{n}_f} \llbracket \tensorOne{\eta}_j \rrbracket \right)_{\Gamma_f}
  \nonumber \\
  &
  + \left( \llbracket \tensorOne{\eta}_i \rrbracket, \evaluate{ \frac{\partial \hat{\tensorOne{t}}_T}{\partial \tensorOne{g}_N}}{}{\ell}                                                                                                                                                                                                                                                          P_{\tensorOne{n}_f} \llbracket \tensorOne{\eta}_j \rrbracket \right)_{\Gamma_f}
  + \left( \llbracket \tensorOne{\eta}_i \rrbracket, \evaluate{\frac{\partial \hat{\tensorOne{t}}_T}{\partial \Delta_n \tensorOne{g}_T }}{}{\ell}                                                                                                                                                                                                                                                          P_{\tensorOne{n}_f}^{\perp} \llbracket \tensorOne{\eta}_j \rrbracket \right)_{\Gamma_f}
  & \forall (i,j) \in \{1, \ldots, n_u \} \times \{1, \ldots, n_u \} ,
\end{align}
where the partial derivatives are expanded as:
\begin{subequations}
  \begin{align}
    \evaluate{\frac{\partial \tensorTwo{\sigma}}{\partial \tensorTwo{\epsilon}}}{}{\ell}                                                                                                                                                                                                                                                         
                                                                                                                                                                                                                                                                                                                                            & =
    \mathbb{C},                                                                                                                                                                                                                                                                                                                                 \\
    \evaluate{\frac{\partial \hat{\tensorOne{t}}_N}{\partial \tensorOne{g}_N}}{}{\ell}                                                                                                                                                                                                                                                          & =
    \begin{dcases}
      \quad \varepsilon_N \tensorTwo{1} \\
      \quad \tensorTwo{0}
    \end{dcases}                                                                                                                                                                                                                                                                                                    &   &
    \begin{aligned}
       & \text{if } t^{h,k}_N + \varepsilon_N g^{h,k+1,\ell}_N \le 0, \\
       & \text{if } t^{h,k}_N + \varepsilon_N g^{h,k+1,\ell}_N > 0,
    \end{aligned}
    \\[1ex]
    \evaluate{\frac{\hat{\tensorOne{t}}_T}{\partial \tensorOne{g}_N}}{}{\ell}                                                                                                                                                                                                                                                          & =
    \begin{cases}
      \quad \tensorOne{0} \\
      \quad
      - \varepsilon_N \tan(\theta)
      \dfrac{ \tensorOne{t}^*_T }{ \| \tensorOne{t}^*_T \|_2 }
      \otimes \tensorOne{n}_f
    \end{cases}                                                                                                                                                                                                                                              &   &
    \begin{aligned}
       & \text{if } \| \tensorOne{t}^{*}_T \|_2 < \tau_{\max},   \\
       & \text{if } \| \tensorOne{t}^{*}_T \|_2 \ge \tau_{\max},
    \end{aligned}
    \\[1ex]
    \evaluate{\frac{\partial \hat{\tensorOne{t}}_T}{\partial \Delta_n \tensorOne{g}_T }}{}{\ell}                                                                                                                                                                                                                                                         
                                                                                                                                                                                                                                                                                                                                            & =
    \begin{cases}
      \quad \varepsilon_T \tensorTwo{1} \\
      \quad \varepsilon_T \tau_{\max}
      \left(
      \dfrac{ \| \tensorOne{t}^{*}_T \|_2^2 \tensorTwo{1} - \tensorOne{t}^{*}_T \otimes \tensorOne{t}^{*}_T }
      { \| \tensorOne{t}^{*}_T \|_2^3 }
      \right)
    \end{cases} &   &
    \begin{aligned}
       & \text{if } \| \tensorOne{t}^{*}_T \|_2 < \tau_{\max},   \\
       & \text{if } \| \tensorOne{t}^{*}_T \|_2 \ge \tau_{\max}.
    \end{aligned}
  \end{align}
\end{subequations}
with $\tensorOne{t}^*_T = \tensorOne{t}^{h,k}_T + \varepsilon_T \Delta_n \tensorOne{g}^{h,k+1,\ell}_T$.

\section{}\label{sec:appendixB}
In this Appendix, we gather some elementary results that are utilized to derive the theoretical findings. In particular, let us recall the inverse inequality and the trace inequality:
\begin{linenomath}
  \begin{subequations}
    \begin{align}
       & \snormH{\tensorOne{u}_h}^2 \le c \diam{\Omega}^{-2} \normL{\tensorOne{u}_h}^2,                                                                                       &  & \forall \tensorOne{u}_h \in \boldsymbol{\mathcal{U}}^{h}, \label{eq:invIneq} \\
       & \normLg{\gamma(\tensorOne{u})} \le c \left ( \diam{\Omega}^{-\frac{1}{2}} \normL{\tensorOne{u}} + \diam{\Omega}^{\frac{1}{2}} \normL{ \nabla \tensorOne{u}} \right ) &  & \forall \tensorOne{u} \in \boldsymbol{H}^1(\Omega). \label{eq:traceIneq}
    \end{align}
  \end{subequations}
\end{linenomath}
Moreover, assuming the affine mapping $F$, namely $\tensorOne{v}:= F^{-1} \circ \hat{\tensorOne{v}}$, we have the following results (\cite{BofBreFor13},  pp. 59):
\begin{linenomath}
  \begin{subequations}
    \begin{align}
       & \normL{\tensorOne{v}}^2 \le c \meas{K} \normL{\hat{\tensorOne{v}}}^2, \label{eq:affIneq_v_hv}                           \\
       & \normL{\hat{\tensorOne{v}}}^2 \le c \meas{K}^{-1} \normL{\tensorOne{v}}^2, \label{eq:affIneq_hv_v}                      \\
       & \normLg{\gamma(\tensorOne{v})}^2 \le c \meas{F} \normLg{\gamma(\hat{\tensorOne{v}})}^2, \label{eq:gamAffIneq_v_hv}      \\
       & \normLg{\gamma(\hat{\tensorOne{v}})}^2 \le c \meas{F}^{-1} \normLg{\gamma(\tensorOne{v})}^2, \label{eq:gamAffIneq_hv_v}
    \end{align}\label{eq:affIneq}
  \end{subequations}
\end{linenomath}
with $\hat{\tensorOne{v}} \in H^1(\hat{K})$ and $\tensorOne{v} \in H^1(K)$.
Finally, by combining equations \eqref{eq:normH_0} and \eqref{eq:invIneq}, we can bound the $H_1$-norm:
\begin{linenomath}
  \begin{subequations}
    \begin{align}
       & \normH{\tensorOne{v}}^2 \le (1+c) \diam{\Omega}^{-2} \normL{\tensorOne{v}}^2
    \end{align}\label{eq:normH_1}
  \end{subequations}
\end{linenomath}
\section*{Declaration of generative AI and AI-assisted technologies in the writing process}
During the preparation of this work the authors used \textit{gpt-4o} to improve language and readability. After using this tool/service, the authors reviewed and edited the content as needed and take full responsibility for the content of the publication.

\bibliographystyle{elsarticle-num}


\begin{thebibliography}{10}
\expandafter\ifx\csname url\endcsname\relax
  \def\url#1{\texttt{#1}}\fi
\expandafter\ifx\csname urlprefix\endcsname\relax\def\urlprefix{URL }\fi
\expandafter\ifx\csname href\endcsname\relax
  \def\href#1#2{#2} \def\path#1{#1}\fi

\bibitem{FerGamJanTea10}
M.~Ferronato, G.~Gambolati, C.~Janna, P.~Teatini, Geomechanical issues of
  anthropogenic co2 sequestration in exploited gas fields, Energy Conversion
  and Management 51~(10) (2010) 1918--1928.

\bibitem{JuaHagHer12}
R.~Juanes, B.~H. Hager, H.~J. Herzog, No geologic evidence that seismicity
  causes fault leakage that would render large-scale carbon capture and storage
  unsuccessful, Proceedings of the National Academy of Sciences 109~(52) (2012)
  E3623--E3623.

\bibitem{TeaCasGam14}
P.~Teatini, N.~Castelletto, G.~Gambolati, 3d geomechanical modeling for co2
  geological storage in faulted formations. a case study in an offshore
  northern adriatic reservoir, italy, International Journal of Greenhouse Gas
  Control 22 (2014) 63--76.
\newblock \href {https://doi.org/10.1016/j.ijggc.2013.12.021}
  {\path{doi:10.1016/j.ijggc.2013.12.021}}.

\bibitem{BuiEtal19}
L.~Buijze, L.~van Bijsterveldt, H.~Cremer, B.~Paap, H.~Veldkamp, B.~B. Wassing,
  J.-D. Van~Wees, G.~C. van Yperen, J.~H. ter Heege, B.~Jaarsma, Review of
  induced seismicity in geothermal systems worldwide and implications for
  geothermal systems in the netherlands, Netherlands Journal of Geosciences 98
  (2019) e13.

\bibitem{KivPujRutVil22}
I.~R. Kivi, E.~Pujades, J.~Rutqvist, V.~Vilarrasa, Cooling-induced reactivation
  of distant faults during long-term geothermal energy production in hot
  sedimentary aquifers, Scientific reports 12~(1) (2022) 2065.

\bibitem{SalSilLunRogDavJua25}
L.~Sal{\'o}-Salgado, J.~A. Silva, L.~Lun, C.~M. Rogers, J.~S. Davis, R.~Juanes,
  Assessing co 2 co\_2 migration within faults during megatonne-scale geologic
  carbon dioxide storage in offshore texas, Water Resources Research 61~(5)
  (2025) e2024WR037059.

\bibitem{WriLau06}
P.~Wriggers, T.~A. Laursen, Computational contact mechanics, Vol.~2, Springer,
  2006.

\bibitem{PusLau04}
M.~A. Puso, T.~A. Laursen, A mortar segment-to-segment frictional contact
  method for large deformations, Computer methods in applied mechanics and
  engineering 193~(45-47) (2004) 4891--4913.

\bibitem{FraFerJanTea16}
A.~Franceschini, M.~Ferronato, C.~Janna, P.~Teatini, A novel lagrangian
  approach for the stable numerical simulation of fault and fracture mechanics,
  Journal of Computational Physics 314 (2016) 503--521.

\bibitem{GarKarTch16}
T.~Garipov, M.~Karimi-Fard, H.~Tchelepi, Discrete fracture model for coupled
  flow and geomechanics, Computational Geosciences 20 (2016) 149--160.

\bibitem{SetFuWalWhi17}
R.~R. Settgast, P.~Fu, S.~D. Walsh, J.~A. White, C.~Annavarapu, F.~J. Ryerson,
  A fully coupled method for massively parallel simulation of hydraulically
  driven fractures in 3-dimensions, International Journal for Numerical and
  Analytical Methods in Geomechanics 41~(5) (2017) 627--653.

\bibitem{OliHueSan06}
J.~Oliver, A.~E. Huespe, P.~J. S{\'a}nchez, A comparative study on finite
  elements for capturing strong discontinuities: E-fem vs x-fem, Computer
  methods in applied mechanics and engineering 195~(37-40) (2006) 4732--4752.

\bibitem{Bor08}
R.~I. Borja, {Assumed enhanced strain and the extended finite element methods:
  A unification of concepts}, Computer Methods in Applied Mechanics and
  Engineering 197~(33--40) (2008) 2789--2803.
\newblock \href {https://doi.org/10.1016/j.cma.2008.01.019}
  {\path{doi:10.1016/j.cma.2008.01.019}}.

\bibitem{RenJiaYou16}
G.~Ren, J.~Jiang, R.~M. Younis, A fully coupled xfem-edfm model for multiphase
  flow and geomechanics in fractured tight gas reservoirs, Procedia Computer
  Science 80 (2016) 1404--1415.

\bibitem{CusWhiCasSet21}
M.~Cusini, J.~A. White, N.~Castelletto, R.~R. Settgast, Simulation of coupled
  multiphase flow and geomechanics in porous media with embedded discrete
  fractures, International Journal for Numerical and Analytical Methods in
  Geomechanics 45~(5) (2021) 563--584.

\bibitem{WheWicWol14}
M.~F. Wheeler, T.~Wick, W.~Wollner, An augmented-lagrangian method for the
  phase-field approach for pressurized fractures, Computer Methods in Applied
  Mechanics and Engineering 271 (2014) 69--85.

\bibitem{GeeLiuHuMicDol19}
R.~J. Geelen, Y.~Liu, T.~Hu, M.~R. Tupek, J.~E. Dolbow, A phase-field
  formulation for dynamic cohesive fracture, Computer Methods in Applied
  Mechanics and Engineering 348 (2019) 680--711.

\bibitem{Yas13}
V.~A. Yastrebov, Numerical methods in contact mechanics, John Wiley \& Sons,
  2013.

\bibitem{HueWoh05}
S.~H{\"u}eber, B.~I. Wohlmuth, A primal--dual active set strategy for
  non-linear multibody contact problems, Computer Methods in Applied Mechanics
  and Engineering 194~(27-29) (2005) 3147--3166.

\bibitem{Roc74}
R.~T. Rockafellar, Augmented lagrange multiplier functions and duality in
  nonconvex programming, SIAM Journal on Control 12~(2) (1974) 268--285.

\bibitem{CurAla88}
A.~Curnier, P.~Alart, A generalized newton method for contact problems with
  friction, Journal de m{\'e}canique th{\'e}orique et appliqu{\'e}e (1988).

\bibitem{AlaCur91}
P.~Alart, A.~Curnier, A mixed formulation for frictional contact problems prone
  to newton like solution methods, Computer methods in applied mechanics and
  engineering 92~(3) (1991) 353--375.

\bibitem{SimLau92}
J.~Simo, T.~Laursen, {An augmented lagrangian treatment of contact problems
  involving friction}, Computers \& Structures 42~(1) (1992) 97--116.
\newblock \href {https://doi.org/10.1016/0045-7949(92)90540-G}
  {\path{doi:10.1016/0045-7949(92)90540-G}}.

\bibitem{BurHanLar19}
E.~Burman, P.~Hansbo, M.~G. Larson, Augmented lagrangian finite element methods
  for contact problems, ESAIM: Mathematical Modelling and Numerical Analysis
  53~(1) (2019) 173--195.

\bibitem{BrePit84}
F.~Brezzi, J.~Pitk{\"a}ranta, On the stabilization of finite element
  approximations of the stokes equations, in: Efficient Solutions of Elliptic
  Systems: Proceedings of a GAMM-Seminar Kiel, January 27 to 29, 1984,
  Springer, 1984, pp. 11--19.

\bibitem{BreLeoMarRus97}
F.~Brezzi, L.~P. Franca, D.~Marini, A.~Russo, Stabilization Techniques for
  Domain Decomposition Methods with Non-matchinggrids, Consiglio Nazionale
  delle Ricerche. Istituto di Analisi Numerica, 1997.

\bibitem{BreMar01}
F.~Brezzi, D.~Marini, Error estimates for the three-field formulation with
  bubble stabilization, Mathematics of computation 70~(235) (2001) 911--934.

\bibitem{HauLeT07}
P.~Hauret, P.~Le~Tallec, A discontinuous stabilized mortar method for general
  3d elastic problems, Computer Methods in Applied Mechanics and Engineering
  196~(49-52) (2007) 4881--4900.

\bibitem{PusLauSol08}
M.~A. Puso, T.~Laursen, J.~Solberg, A segment-to-segment mortar contact method
  for quadratic elements and large deformations, Computer methods in applied
  mechanics and engineering 197~(6-8) (2008) 555--566.

\bibitem{BofBreFor13}
D.~Boffi, F.~Brezzi, M.~Fortin, et~al., Mixed finite element methods and
  applications, Vol.~44, Springer, 2013.

\bibitem{RodHuOhmAdlGasZik18}
C.~Rodrigo, X.~Hu, P.~Ohm, J.~H. Adler, F.~J. Gaspar, L.~Zikatanov, New
  stabilized discretizations for poroelasticity and the stokes’ equations,
  Computer Methods in Applied Mechanics and Engineering 341 (2018) 467--484.

\bibitem{DroEncFaiHaiMas24}
J.~Droniou, G.~Ench{\'e}ry, I.~Faille, A.~Haidar, R.~Masson, A bubble vem-fully
  discrete polytopal scheme for mixed-dimensional poromechanics with frictional
  contact at matrix--fracture interfaces, Computer Methods in Applied Mechanics
  and Engineering 422 (2024) 116838.

\bibitem{ElmSilWat14}
H.~C. Elman, D.~J. Silvester, A.~J. Wathen, Finite elements and fast iterative
  solvers: with applications in incompressible fluid dynamics, Oxford
  university press, 2014.

\bibitem{FraCasWhiTch20}
A.~Franceschini, N.~Castelletto, J.~A. White, H.~A. Tchelepi, {Algebraically
  stabilized Lagrange multiplier method for frictional contact mechanics with
  hydraulically active fractures}, Computer Methods in Applied Mechanics and
  Engineering 368 (2020) 113161.
\newblock \href {https://doi.org/10.1016/j.cma.2020.113161}
  {\path{doi:10.1016/j.cma.2020.113161}}.

\bibitem{FriCasFerWhi21}
M.~Frigo, N.~Castelletto, M.~Ferronato, J.~A. White, Efficient solvers for
  hybridized three-field mixed finite element coupled poromechanics, Computers
  \& Mathematics with Applications 91 (2021) 36--52.

\bibitem{AroCasHamWhiTch24}
R.~M. Aronson, N.~Castelletto, F.~P. Hamon, J.~A. White, H.~A. Tchelepi,
  {Pressure-stabilized fixed-stress iterative solutions of compositional
  poromechanics}, Computer Methods in Applied Mechanics and Engineering 427
  (2024) 117008.
\newblock \href {https://doi.org/10.1016/j.cma.2024.117008}
  {\path{doi:10.1016/j.cma.2024.117008}}.

\bibitem{Kik88}
N.~Kikuchi, Contact problems in elasticity: A study of variational inequalities
  and finite element methods (1988).

\bibitem{Woh11}
B.~Wohlmuth, {Variationally consistent discretization schemes and numerical
  algorithms for contact problems}, Acta Numer. 20 (2011) 569--734.
\newblock \href {https://doi.org/10.1017/s0962492911000079}
  {\path{doi:10.1017/s0962492911000079}}.

\bibitem{Ren13}
Y.~Renard, {Generalized Newton’s methods for the approximation and resolution
  of frictional contact problems in elasticity}, Computer Methods in Applied
  Mechanics and Engineering 256 (2013) 38--55.
\newblock \href {https://doi.org/10.1016/j.cma.2012.12.008}
  {\path{doi:10.1016/j.cma.2012.12.008}}.

\bibitem{Jar83}
J.~Jaru{\v{s}}ek, Contact problems with bounded friction. coercive case,
  Czechoslovak Mathematical Journal 33~(2) (1983) 237--261.

\bibitem{EckJar98}
C.~Eck, J.~Jarusek, Existence results for the static contact problem with
  coulomb friction, Mathematical Models and Methods in Applied Sciences 8~(03)
  (1998) 445--468.

\bibitem{LabRen08}
P.~Laborde, Y.~Renard, Fixed point strategies for elastostatic frictional
  contact problems, Mathematical methods in the applied sciences 31~(4) (2008)
  415--441.

\bibitem{Lau03}
T.~A. Laursen, {Computational Contact and Impact Mechanics: Fundamentals of
  Modeling Interfacial Phenomena in Nonlinear Finite Element Analysis},
  Springer-Verlag Berlin Heidelberg, 2003.

\bibitem{Wri06}
P.~Wriggers, {Computational Contact Mechanics}, 2nd Edition, Springer-Verlag
  Berlin Heidelberg, 2006.
\newblock \href {https://doi.org/10.1007/978-3-540-32609-0}
  {\path{doi:10.1007/978-3-540-32609-0}}.

\bibitem{For77}
M.~Fortin, {An analysis of the convergence of mixed finite element methods},
  R.A.I.R.O. Analyse Num\'{e}rique 11~(4) (1977) 341--354.
\newblock \href {https://doi.org/10.1051/m2an/1977110403411}
  {\path{doi:10.1051/m2an/1977110403411}}.

\bibitem{Woh12}
B.~I. Wohlmuth, Discretization methods and iterative solvers based on domain
  decomposition, Vol.~17, Springer Science \& Business Media, 2012.

\bibitem{PhaNapGraKap03}
A.-V. Phan, J.~A.~L. Napier, L.~J. Gray, T.~Kaplan,
  \href{https://onlinelibrary.wiley.com/doi/abs/10.1002/nme.707}{Symmetric-galerkin
  bem simulation of fracture with frictional contact}, International Journal
  for Numerical Methods in Engineering 57~(6) (2003) 835--851.
\newblock \href
  {http://arxiv.org/abs/https://onlinelibrary.wiley.com/doi/pdf/10.1002/nme.707}
  {\path{arXiv:https://onlinelibrary.wiley.com/doi/pdf/10.1002/nme.707}}, \href
  {https://doi.org/https://doi.org/10.1002/nme.707}
  {\path{doi:https://doi.org/10.1002/nme.707}}.
\newline\urlprefix\url{https://onlinelibrary.wiley.com/doi/abs/10.1002/nme.707}

\bibitem{BeaChoLaaMas23}
L.~Beaude, F.~Chouly, M.~Laaziri, R.~Masson, {Mixed and Nitsche’s
  discretizations of Coulomb frictional contact-mechanics for mixed dimensional
  poromechanical models}, Computer Methods in Applied Mechanics and Engineering
  413 (2023) 116124.
\newblock \href {https://doi.org/10.1016/j.cma.2023.116124}
  {\path{doi:10.1016/j.cma.2023.116124}}.

\bibitem{NovShoVosHajJan2023}
A.~Novikov, S.~Shokrollahzadeh~Behbahani, D.~Voskov, H.~Hajibeygi, J.~D.
  Jansen, Benchmarking analytical and numerical simulation of induced fault
  slip, in: ARMA US Rock Mechanics/Geomechanics Symposium, ARMA, 2023, pp.
  ARMA--2023.

\bibitem{SaaSch86}
Y.~Saad, M.~H. Schultz, Gmres: A generalized minimal residual algorithm for
  solving nonsymmetric linear systems, SIAM Journal on scientific and
  statistical computing 7~(3) (1986) 856--869.

\bibitem{FalUlr02}
R.~D. Falgout, U.~M. Yang, hypre: A library of high performance
  preconditioners, in: International Conference on computational science,
  Springer, 2002, pp. 632--641.

\bibitem{AxeGus78}
O.~Axelsson, I.~Gustafsson, {Iterative methods for the solution of the navier
  equations of elasticity}, Computer Methods in Applied Mechanics and
  Engineering 15~(2) (1978) 241--258.
\newblock \href {https://doi.org/10.1016/0045-7825(78)90026-9}
  {\path{doi:10.1016/0045-7825(78)90026-9}}.

\bibitem{Geos24}
R.~R. Settgast, R.~M. Aronson, J.~R. Besset, A.~Borio, Q.~M. Bui, T.~J. Byer,
  N.~Castelletto, A.~Citrain, B.~C. Corbett, J.~Corbett, P.~Cordier, M.~A.
  Cremon, C.~M. Crook, M.~Cusini, F.~Fei, S.~Frambati, J.~Franc,
  A.~Franceschini, M.~Frigo, P.~Fu, T.~Gazzola, H.~Gross, F.~Hamon, B.~M. Han,
  Y.~Hao, R.~Hasanzade, M.~Homel, J.~Huang, T.~Jin, I.~Ju, D.~Kachuma,
  M.~Karimi-Fard, T.~Kim, S.~Klevtsov, A.~Lapene, V.~A. p.~Magri, A.~Mazuyer,
  M.~N'diaye, D.~Osei-Kuffuor, S.~Povolny, G.~Ren, S.~J. Semnani, C.~S.
  Sherman, M.~Rey, H.~A. Tchelepi, W.~R. Tobin, P.~Tomin, L.~Untereiner,
  A.~Vargas, S.~Waziri, X.~Wen, J.~A. White, H.~Wu, {GEOS: A performance
  portable multi-physics simulation framework for subsurface applications},
  Journal of Open Source Software 9~(102) (2024) 6973.
\newblock \href {https://doi.org/10.21105/joss.06973}
  {\path{doi:10.21105/joss.06973}}.

\bibitem{Bos_etal21}
S.~B.~M. Bosma, S.~Klevtsov, O.~M{\o}yner, N.~Castelletto, {Enhanced multiscale
  restriction-smoothed basis (MsRSB) preconditioning with applications to
  porous media flow and geomechanics}, Journal of Computational Physics 428
  (2021) 109934.
\newblock \href {https://doi.org/10.1016/j.jcp.2020.109934}
  {\path{doi:10.1016/j.jcp.2020.109934}}.

\bibitem{BauFerGamTea02}
D.~Ba{\`u}, M.~Ferronato, G.~Gambolati, P.~Teatini, Basin-scale compressibility
  of the northern adriatic by the radioactive marker technique, Geotechnique
  52~(8) (2002) 605--616.

\end{thebibliography}





\end{document}